\if@twocolumn\PassOptionsToPackage{switch}{lineno}\else\fi\makeatother
\def\mcWidth#1{\csname TY@F#1\endcsname+\tabcolsep}
\def\cAlignHack{\rightskip\@flushglue\leftskip\@flushglue\parindent\z@\parfillskip\z@skip}
\def\rAlignHack{\rightskip\z@skip\leftskip\@flushglue \parindent\z@\parfillskip\z@skip}
\if@twocolumn\@ifpackageloaded{stfloats}{}{\usepackage{dblfloatfix}}\fi\fi
\def\eqalign#1{\null\vcenter{\def\\{\cr}\openup\jot\m@th
  \ialign{\strut$\displaystyle{##}$\hfil&$\displaystyle{{}##}$\hfil
      \crcr#1\crcr}}\,}
\renewcommand\efloat@iwrite[1]{\immediate\expandafter\protected@write\csname efloat@post#1\endcsname{}}}{\newif\ifefloat@tables}%
\def\BreakURLText#1{\@tfor\brk@tempa:=#1\do{\brk@tempa\hskip0pt}}
\let\lt=<
\let\gt=>
\def\processVert{\ifmmode|\else\textbar\fi}
\def\subparagraph{\@startsection{paragraph}{5}{2\parindent}{0ex plus 0.1ex minus 0.1ex}%
{0ex}{\normalfont\small\itshape}}%
\newcommand\role[1]{\unskip}
\newcommand\aucollab[1]{\unskip}
\def\checkGraphicsWidth{\ifdim\Gin@nat@width>\linewidth
	\tsGraphicsScaleX\linewidth\else\Gin@nat@width\fi}
\def\checkGraphicsHeight{\ifdim\Gin@nat@height>.9\textheight
	\tsGraphicsScaleY\textheight\else\Gin@nat@height\fi}
\def\fixFloatSize#1{}
\let\ts@includegraphics\includegraphics
\def\inlinegraphic[#1]#2{{\edef\@tempa{#1}\edef\baseline@shift{\ifx\@tempa\@empty0\else#1\fi}\edef\tempZ{\the\numexpr(\numexpr(\baseline@shift*\f@size/100))}\protect\raisebox{\tempZ pt}{\ts@includegraphics{#2}}}}
\DeclareMathAlphabet{\mathpzc}{OT1}{pzc}{m}{it}
\def\URL#1#2{\@ifundefined{href}{#2}{\href{#1}{#2}}}
\def\UrlOrds{\do\*\do\-\do\~\do\'\do\"\do\-}%
\g@addto@macro{\UrlBreaks}{\UrlOrds}
\edef\fntEncoding{\f@encoding}
\newif\ifmultipleabstract\multipleabstractfalse%
\renewcommand*\efloat@process[2]{%
  \ef@ifct{#1}{%
    \expandafter\immediate\expandafter\closeout\csname efloat@post#1\endcsname
    \ef@setct{#1}{0}%
    \clearpage                                                         
        
    \efloat@ifflag{#2list}{
      {\normalsize\efloat@listof{#2}}
    }{}%

    \efloat@ifflag{#2head}{%
      \section*{\@nameuse{#2section}}
      \suppressfloats[t]
    }{}

    \markboth                                                          
      {\expandafter\uppercase\expandafter{\csname #2section\endcsname}}
      {\expandafter\uppercase\expandafter{\csname #2section\endcsname}}

    \def\efloat@type{#2}%
    \processdelayedfloat@hook
    \@nameuse{process#2s@hook}%
    \clearpage
    \@input{\jobname.#1}%
  }{}}
\numberwithin{equation}{section}
\numberwithin{figure}{section}
\newcommand\reallywidehat[1]{%
	\savestack{\tmpbox}{\stretchto{%
			\scaleto{%
				\scalerel*[\widthof{\ensuremath{#1}}]{\kern-.6pt\bigwedge\kern-.6pt}%
				{\rule[-\textheight/2]{1ex}{\textheight}}
			}{\textheight}%
		}{0.5ex}}%
	\stackon[1pt]{#1}{\tmpbox}%
}
\newcommand{\bE}{\mathbb{E}}
\newcommand{\bN}{\mathbb{N}}
\newcommand{\bP}{\mathbb{P}}
\newcommand{\bQ}{\mathbb{Q}}
\newcommand{\bR}{\mathbb{R}}
\newcommand{\bS}{\mathbb{S}}
\newcommand{\bZ}{\mathbb{Z}}
\newcommand{\cC}{\mathcal{C}}
\newcommand{\cX}{\mathcal{X}}
\newcommand{\cY}{\mathcal{Y}}
\newcommand\Item[1][]{%
	\ifx\relax#1\relax  \item \else \item[#1] \fi
	\abovedisplayskip=0pt\abovedisplayshortskip=0pt~\vspace*{-\baselineskip}}
\newcommand{\Oh}{
	\mathchoice
	{{\mathcal{O}}}
	{{\mathcal{O}}}
	{{\mathcal{O}}}
	{\scalebox{1.2}{$\mathcal{O}$}}
}
\newcommand{\oh}{
	\mathchoice
	{{\scriptstyle\mathcal{O}}}
	{{\scriptstyle\mathcal{O}}}
	{{\scriptscriptstyle\mathcal{O}}}
	{\scalebox{.7}{$\scriptscriptstyle\mathcal{O}$}}
}
\newcommand{\E}{\mathbb{E}}
\renewcommand{\P}{\mathbb{P}}
\newcommand{\1}{\mathbbm{1}}   
\begin{document}
	
\title{Rearranged Stochastic Heat Equation}\author{François Delarue and William R.P. Hammersley \thanks{W. Hammersley is supported by French ANR project 
		ANR-19-P3IA-0002 -- 3IA C\^ote d'Azur -- Nice -- Interdisciplinary Institute for Artificial Intelligence. F. Delarue is supported by the European Research Council (ERC) under the European Union’s Horizon 2020 research and innovation programme (ELISA project, Grant agreement No. 101054746).}}{\raggedright }
\institute{Universit\'e C\^ote d'Azur, CNRS, 
	Laboratoire J.A. Dieudonn\'e}\titlerunning{{\mbox{Rearranged Stochastic Heat Equation}}}

\authorrunning{François Delarue and William Hammersley}
        \date{\today}
      \maketitle 
\begin{abstract} The purpose of this work is to provide an explicit construction of a strong Feller semigroup on the space of probability measures over the real line that additionally maps bounded measurable functions into Lipschitz continuous functions, with a Lipschitz constant that blows up in an integrable manner in small time. Our construction relies on a rearranged version of the stochastic heat equation on the circle driven by a coloured noise. Formally, this stochastic equation writes as a reflected equation in infinite dimension.
Under the action of the rearrangement, the solution is forced to live in a space of quantile functions that is isometric to the space of probability measures on the real line. We prove the equation to be solvable by means of an Euler scheme in which we alternate flat dynamics in the space of random variables on the circle with a rearrangement operation that projects back the random variables onto the subset of quantile functions. A first challenge is to prove that this scheme is tight. A second one is to provide a consistent theory for the limiting reflected equation and in particular to interpret in a relevant manner the reflection term. The last step in our work is to establish the aforementioned Lipschitz property of the semigroup by adapting earlier ideas from the Bismut-Elworthy-Li formula. 
	\vspace{4pt}

	\noindent \textbf{Keywords}: {Measure-valued Diffusions, Wasserstein Diffusions, Reflected SPDE, Common Noise Mean Field Models, Rearrangement Inequalities, Bismut-Elworthy-Li formula.}
	\vspace{2pt} 

	\noindent \textbf{AMS Classification}: {60H15, 60G57, 47D07, 60J35}.

\end{abstract}
 
\section{Introduction} 


{\bf \textit{Mean-field models with common noise}}. Our work is motivated by recent developments in the theory of mean-field models, at the intersection of stochastic analysis, calculus of variations and control and game theories. Although mean field models have a long history, stemming from statistical mechanics (see the pioneering work \cite{Kac56}), the problems studied in recent years are, in comparison, of an increasing complexity. For example, the solutions of control or game problems give rise, in the mean-field regime, to partial differential equations posed on the space of probability measures, whose understanding {remains an active area of research} in the case of control and {with even more open questions} in the case of games (see 
\cite{CardaliaguetSouganidis,CDLL,CecchinDelarue,GangboMayorgaSwiech} and the references therein for a recent state of the art on these questions).

Stochastic mean-field models lead, as soon as they evolve with time, to the study of dynamics with values in the space of probability measures. Although the latter are
understood as evolutions of the law of a typical particle, representative of the mean-field continuum, these probability measures remain most often deterministic. For example, they may be governed by non-linear Fokker-Planck equations or, depending on the terminology, may obey nonlinear Markovian dynamics, see for instance the seminal work by McKean \cite{McKean66}
and the monograph
\cite{Kolokoltsov_book}. Nevertheless, many recent works have underlined the interest in considering random dynamics on the space of probability measures. From a modelling point of view, the nonlinear Fokker-Planck equations become stochastic when the particles composing the mean-field continuum are subject to common noise, see for instance
the earlier works 
\cite{dawsonVaillancourt1995,KurtzXiong,KurtzXiong2,vaillancourt1988}
and also the more recent monographs \cite{CDLL,CarmonaDelarueII} within the framework of control and games. The presence of a common noise also raises interesting mathematical challenges; although it is possible in some cases to adapt the usual techniques of mean-field models, the understanding of the impact of common noise is in fact rather limited. In particular, there is currently no catalogue listing the varying effects of common noise on the statistical behaviour of solutions, unlike the theory of finite-dimensional diffusion processes, in which the impact of noise has been widely studied.
 
 \vskip 4pt
 
\noindent {\bf \textit{Models with a smoothing effect}}.
Typically - and this is the framework of this paper - it may be relevant to ask about the possible regularisation properties of the semigroup induced by a stochastic Fokker-Planck equation or by a mean-field model with a common noise. Although the expected properties are certainly limited when the common noise is of finite dimension (since the ambient space is of infinite dimension), the situation is different when the common noise is allowed to be infinite-dimensional. In other words, it is reasonable to imagine that a sufficiently ``large'' common noise could indeed provide regularisation phenomena. There is an example in the literature.
The Fleming-Viot process with 
mutations induced by diffusions 
is a probability measure valued process whose semigroup is strong Feller and maps bounded functions into Lipschitz continuous functions, see \cite{Stannat}. The generator, which acts on functionals of probability measures, contains two parts: a first-order term that coincides with the 
operator coming from a deterministic linear Fokker-Planck equation 
and a second-order term (that should be regarded as being induced by 
a form of common noise) yielded by the ``sampling replacement'' rule characterising the Moran and Fleming-Viot models. However, it must be stressed that 
the small-time smoothing property is rather poor, as the Lipschitz constant of the functions returned by the semigroup
may blow up exponentially fast in small time. This may seem anecdotal, yet such a limitation renders this noise almost impossible to use to establish regularisation by noise results.
 
\subsection{Diffusions with values in the space of probability measures} 
\noindent {\bf \textit{Wasserstein diffusions}}.
Searching for common noise(s) able to force some practicable smoothing properties on the space of probability measures is connected to a \emph{distinct} question addressed by a series of authors for almost fifteen years: what should be a Brownian motion on the space of probability measures? Whilst there has not yet been an answer to this question that may be called canonical, the existing candidates are usually referred to as ``Wasserstein diffusions'' 
(we emphasise that we do not propose a candidate Wasserstein diffusion in the sense described below). 
This terminology echoes the notion of Wasserstein space, defined as the space ${\mathcal P}_2({\mathbb R}^d)$ of probability measures  (on ${\mathbb R}^d$, for 
some $d \geq 1$) with finite second moment, equipped with the $2^{nd}$ Wasserstein distance ${\mathcal W}_2$. 
Many works from calculus of variations demonstrate the interest to endow the Wasserstein space with a kind of 
Riemannian structure, see \cite{JKO,Otto1,Otto2} and the book \cite{AGS}. In this approach, 
the tangent space at $\mu \in {\mathcal P}_2({\mathbb R}^d)$ is 
the closure in $L^2({\mathbb R}^d,{\mathbb R}^d;\mu)$ ($\mu$-square integrable functions from ${\mathbb R}^d$ into itself) of smooth compactly supported gradient vector fields on 
${\mathbb R}^d$. 
Accordingly, the \textit{Wasserstein derivative} or intrinsic gradient of a functional defined on ${\mathcal P}_2({\mathbb R}^d)$
reads, at any $\mu \in {\mathcal P}_2({\mathbb R}^d)$,
as the gradient of
a real-valued function (i.e., a potential) defined on ${\mathbb R}^d$. Roughly speaking, this 
potential corresponds to the so-called flat/functional derivative 
used to formulate the generator of the aforementioned Fleming-Viot process, see \cite{DawsonMarch,Dawson,Stannat}. 
Wasserstein diffusions are usually expected to be valued in ${\mathcal P}_2({\mathbb R})$ and consistent with 
${\mathcal W}_2$; i.e. the small time large deviations having rate functional ${\mathcal W}_2^2$ and 
the local variance (or quadratic variation) in the 
corresponding chain rule (or It\^o formula) 
is expected to derive from the
Riemannian metric. 
Whilst the Fleming-Viot process
is not a ${\mathcal W}_2$-Wasserstein diffusion, examples are known. The most famous is the $1d$ Wasserstein diffusion constructed by von Renesse and Sturm, in
\cite{vRenesseSturm}, wherein they introduce a parametrised class of \textit{entropy} probability measures 
on {${\mathcal P}([0,1])$} { - the space of probability measures on $[0,1]$ - }
and then to consider, under each of these probability measures, the Markov process 
associated with the Dirichlet form generated by the Riemannian metric. 
The entropy probability measures are constructed by transferring Poisson-Dirichlet measures on the space of 
quantile functions on 
{$[0,1]$} onto {${\mathcal P}([0,1])$}, by means of the 
 isometry that exists between the two spaces when the former is equipped with the $L^2$-norm and the latter with ${\mathcal W}_2$.
The same isometry
plays a key role in our work, however we use slightly different quantile functions.

Although the work \cite{vRenesseSturm} has had a great impact in the field, it is fair to say that this Wasserstein diffusion remains a difficult approach. In particular, definition \textit{via} a Dirichlet
form does not permit generic starting points and, to the best of our knowledge, there has not been any 
systematic analysis of the semigroup's properties. We refer to \cite{AndresvRenesse,Sturm} for particle approximations of this Wasserstein 
diffusion and \cite{DoringvRenesse} for a log-Sobolev inequality. 
Several works have been written in the wake of \cite{vRenesseSturm}. For example, in \cite{Konarovskyi,Konarovsky0}, Konarovskyi proposed an alternative construction in one dimension, leading to another definition of the Wasserstein diffusion. From the particle system perspective, this approach aims at evolving a cloud of massive random particles, with the heavier particles having smaller fluctuations. The particles aggregate, becoming heavier as they collide. 
 As opposed to the Dirichlet form construction, the model allows one to consider arbitrary initial conditions, but the collision rules force the dynamics to instantaneously take its values in the set of finitely supported probability measures.
 The analysis has been pushed further in 
\cite{KonarovskyivRenesse}, but 
many questions remain open, starting with uniqueness when the cloud of particles is initialised from a continuum.    
We refer to \cite{Konarovskyi2} and the references therein for an extension allowing for fragmentation and to \cite{Marx1} for a mollification
of the coalescing dynamics, for which uniqueness holds true.
Last but not least, the $1d$ dynamics constructed in 
\cite{Konarovskyi,Konarovsky0}
are somehow extended to the higher dimensional setting in 
\cite{DelloSchiavo0}
but using the theory of Dirichlet forms in the spirit of \cite{vRenesseSturm}. 
\vskip 4pt

\noindent{\bf \textit{Connection with the Dean-Kawasaki equation}}. The aforementioned works are connected with stochastic Fokker-Planck equations. 
In \cite{Konarovskyi,Konarovsky0,vRenesseSturm}, these Wasserstein diffusions each induce generators (acting on functionals of probability measures) sharing similarities with the generator of the so-called Dean-Kawasaki equation. 
Formally, the latter is a stochastic version of a standard Fokker-Planck equation (of order 1 or 2 depending on the cases) including an additional noisy term whose local quadratic variation derives exactly from the Riemannian metric 
on ${\mathcal P}({\mathbb R}^d)$ (with $d=1$ in 
\cite{Konarovskyi,Konarovsky0,vRenesseSturm}). 
However, it has been proved in 
\cite{KonarovskyivRenesse3,KonarovskyivRenesseTobias}
that the Dean-Kawasaki equation, 
in its strict version, cannot be solvable except in trivial cases where it reduces to a finite dimensional particle system (which requires the initial distribution to be finitely supported).
This negative result has an interesting consequence: some extra correction is needed in the dynamics, which is exactly 
what is done in 
 \cite{Konarovskyi,Konarovsky0,vRenesseSturm}. However, so far there has not been any canonical choice for 
such a correction. 

The very spice of the Dean-Kawasaki equation may be explained as follows. 
When the solution is at some 
probability measure $\mu \in {\mathcal P}({\mathbb R})$, 
a typical particle 
in the {mean-field} continuum, located at some point $x \in {\mathbb R}$, should be subjected to 
the value at this point $x$ of a cylindrical Wiener noise on $L^2({\mathbb R},{\mathbb R};\mu)$, which makes no sense in general.  
This suggests that Dean-Kawasaki dynamics can be approached by replacing the cylindrical noise by a coloured noise. 
To a certain extent, this idea is the basis of the two contributions 
\cite{Ding} and \cite{Marx2}. 
 
In \cite{Marx2}, the resulting semigroup is shown to  have an (albeit weak) mollification effect on functions over ${\mathcal P}({\mathbb R})$.

\subsection{Our contribution}
\label{subse:1.3}

\noindent{\bf \textit{Smoothing properties of the  Ornstein-Uhlenbeck process}}.
Unlike many of the aforementioned works, our {aim} is not to provide another candidate Wasserstein diffusion. 
Our {primary} motivation in this contribution 
is to construct as explicitly as possible a probability-measure valued process having sufficiently strong smoothing properties. Although not discussed further within this text, our long-term goal is to propose a corresponding theory of linear or nonlinear parabolic Partial Differential Equations (PDEs) on the space of probability measures and to exhibit, in this context, second order operators allowing to smooth singularities that may appear in the corresponding hyperbolic PDEs. 
Of course, such a process should share similarities with Wasserstein diffusions, but as we will see, the diffusion introduced in this paper does not satisfy the pre-requisites for being a Wasserstein diffusion. 

Our approach is based on two observations. First, Lions \cite{LionsVideo} showed in his lectures  on mean-field games at the Collège de France, that in the study of mean-field models, it can prove useful to \emph{lift} probability measures into random variables, i.e., to invert the map sending a random variable to its statistical distribution.
Although the inverse is multi-valued, it has been shown that Lions' lifting principle provides a clear picture of the Wasserstein derivative: in short, it can be represented as a Fr\'echet derivative on a Hilbert space of square-integrable random variables, see e.g. \cite{gangbo}. Our second remark is a well-known fact from stochastic analysis: we know how to construct a Hilbert-valued diffusion process with strong smoothing properties. A simple example is the Ornstein-Uhlenbeck process driven by an appropriate operator, see for instance \cite{cerrai,daprato,daPratoZabczyk2014stochEqnsInfDim}.  
This suggests the following procedure: we should project onto the space of probability measures, an Ornstein-Uhlenbeck process taking values in a space of square-integrable random variables. 
Whilst this looks very appealing, this idea has an obvious drawback. In general, the projection should destroy the Markov
nature of the dynamics; transition probabilities
started from two different random variables representing the same probability measure may not be the same.  

Our construction is thus inspired from the Lie-Trotter-Kato formula and 
related splitting methods. We alternate between, one 
step in the space of random variables following some prescribed Ornstein-Uhlenbeck dynamics, and a projection operation to return back from the space of random variables to the space of probability measures. Choosing the probability space carrying the random variables is simple: we work on the circle, {${\mathbb S} \cong \mathbb{R}/\mathbb{Z}$}, equipped with Lebesgue measure. The choice of projection is much more difficult. It is an essential aspect in implementing the splitting scheme
and, as in the contributions \cite{Ding,Konarovskyi,Konarovsky0,Marx1,Marx2,vRenesseSturm}, it leads us to limit our study to the one-dimensional case, with the following two advantages. First, probability measures can be easily identified with quantile functions on the circle (or `symmetric non-increasing functions', see Proposition \ref{prop:rearrangement:def}),  
which makes the choice of projection easier as it suffices to send a function on the circle to an appropriate rearrangement. Second, the rearrangement operation is an easy way to transform a random variable on the circle into 
a quantile function whilst preserving its statistical law (under the Lebesgue measure). 

The resulting scheme in which we combine `flat' dynamics and rearrangement is very much inspired by earlier works of Brenier on discretisation schemes for conservation laws, 
see for instance \cite{Brenier2,Brenier1}, with the main difference being that the works of Brenier are mostly for deterministic dynamics. Since we choose the Laplacian to be the driving operator in the Ornstein-Uhlenbeck dynamics, 
we call the resulting equation the `rearranged stochastic heat equation'. 
\vskip 4pt

\noindent{\bf \textit{Rearranged and reflected equations}}.
The presence of the noise raises many subtleties in our construction. One particular issue is that 
the rearrangement operation and the Laplacian driving the Stochastic Heat Equation (SHE) do not marry well. Obviously, they do not commute. As a result, the smoothing effect of the Laplacian (acting on 
functions on $\bS$) is weaker when the rearrangement is present. At least, this is what we observe in our computations. 
This has a rather dramatic consequence on the {choice of the}
noise. One key feature of the SHE 
is that after convolution with the heat kernel, the cylindrical white noise driving the SHE gives a true random function. When the SHE is rearranged 
(as we do here), this no longer seems to be the case. 
In order to remedy 
this problem, we need to colour the noise driving the SHE. 
As expected, this impacts the generated semigroup's smoothing properties. Nevertheless we succeed to show that the rate at which the derivative of the semigroup 
{blows up} in small time is integrable, as we initially intended.
{It remains} an open question 
 whether the same construction can be achieved for the SHE driven by a cylindrical white noise. 

Another difficulty is to obtain a suitable formulation of the rearranged SHE. Although Brenier's works 
 \cite{Brenier2,Brenier1}
 quite clearly suggest to see the rearrangement as a reflection and indeed to write the rearranged SHE as a reflected equation, again, the presence of the noise requires additional precautions. The study of
reflected differential equations 
is in general more complicated in the stochastic case than in the deterministic case because the solutions are no longer of bounded variation. 
We refer to the seminal article \cite{Lions:Sznitman} in the case of finite dimensional equations. To the best of our knowledge, there is no general theory covering our infinite dimensional formulation of the  rearranged SHE. We therefore propose a tailor-made interpretation in which the reflection term is constructed by hand. Schematically, the rearranged SHE is written as a stochastic partial differential equation (SPDE) on the space $L^2({\mathbb S}):=L^2({\mathbb S},\textrm{\rm Leb}_{{\mathbb S}})$ (of functions on the circle that are square-integrable with respect to the Lebesgue measure) subject to a reflection term forcing the solution to remain in the cone of our chosen quantile functions (symmetric non-increasing). 
This representation is reminiscent of the $1d$ reflected stochastic differential equation studied by Nualart and Pardoux \cite{NualartPardoux} (and extended in \cite{Donati-Martin}), in which the SHE is constrained to be positive. Although the latter positivity constraint may be interpreted as a constraint on the monotonicity of the primitive, the rearranged SHE that we study here is not the primitive of the Nualart-Pardoux reflected equation.

The form of the reflection in \cite{NualartPardoux} was further specified in the later contributions \cite{Zambotti1,Zambotti2} due to Zambotti. These results provide a more refined description of the solution's behaviour at the domain's boundary. In our approach we are not able at this stage, to give a similar picture. Our construction of the reflection process and its associated integral is too elementary. In particular, {we consider only the action of the reflection process on functions that are far more regular than the solution of the equation itself}. 
 Fortunately, this does not prevent us from obtaining a characterisation of the solutions, sufficient to carry out our program to the end. In fact, Zambotti's results are based on a formula of integration by parts that allows one to reinterpret the solutions 
of the Nualart-Pardoux equation by means of the theory of Dirichlet forms. The adaptation to our case remains completely open. 
We refer however to 
the papers \cite{Barbu3,Barbu2,Barbu1,rocknerZhuZhu2012reflectInfDimConv}
for more general works that have been published subsequently on reflected stochastic differential equations in infinite dimension.
\vskip 4pt

\noindent{\bf \textit{Description of the results}}. 
The rearranged SHE is proven well-posed in the strong sense. The main solvability result is Theorem 
\ref{thm:main:existence:uniqueness} and the reader may find the notion of solution in Definition \ref{def:existence}. The proof holds in two main steps. The first is to show existence of weak solutions and the second one is to prove that uniqueness holds in the strong sense. 
Strong existence then follows from a standard adaptation of Yamada-Watanabe's theorem. As is often the case, the first step is more challenging. Weak solutions are obtained as weak limits of linear interpolations of an Euler scheme: each iteration is a small time step of Ornstein-Uhlenbeck dynamics in $L^2({\mathbb S})$ followed by rearrangement {of} the terminal random variable. Part of the challenge is to show that the scheme is tight (in the space of continuous functions). 
This is done in Section \ref{sec:2} by using several key properties of the rearrangement 
operation, as presented in Section \ref {sec:rearrangement}. To complete the proof of the existence of a weak solution, we need to 
give an appropriate sense to the 
reflection process, which is one of the goals of Section \ref{sec:3}. The main point in proving strong uniqueness is to impose, in the definition of a solution, a weak form of orthogonality between the solution and the reflection.  
The second main statement of the article is Theorem \ref{thm:main:lipschitz}, which says that the semi-group generated by our rearranged SHE
is strongly Feller, i.e., maps bounded measurable functions
 into continuous 
 functions. Moreover, the semi-group returns Lipschitz continuous functions, 
 with Lipschitz constant diverging integrably in small time. {The proof of Theorem \ref{thm:main:lipschitz}}
draws heavily on previous works on the so-called Bismut-Elworthy{-Li} formula, an integration by parts formula for the 
transition probabilities of a 
 diffusion process, see
  for instance
  \cite{Elworthy92,ElworthyLi,Thalmaier} 
in the finite-dimensional framework and
\cite{DPEZ}
and
\cite[Chapter 7]{cerrai}
in infinite dimension. 
Such an integration by parts is strongly related to 
Malliavin calculus, see 
for instance
Exercise 2.3.5 in
the book 
\cite{Nualartbook}, 
together with the papers
 \cite{Bismut}
and
 \cite{norris1986simplifiedMallCalc}. 
 Transposition 
 of the Bismut-Elworthy{-Li} formula to the reflected setting is known however to raise some technical difficulties.
 A major obstacle, is to prove differentiability of the flow with respect to the initial condition. 
 We refer to 
\cite{deuschelZambotti2005bismutElworthySDErefl}
for the first result in this direction (drifted {Brownian} motion with reflection in the orthant)
and to 
\cite{Andres1,Andres2,LipschutzRamanan1,LipschutzRamanan2} for further results. 
None of these results (which are all in finite dimension) apply to our case. 
At this stage, we do not know if similar results hold for the rearranged SHE.  
Instead, in our analysis, we use the sole property that the flow (generated by the rearranged SHE) 
 is Lipschitz continuous with respect to 
the initial condition and thus
almost everywhere differentiable 
when the initial condition is restricted to a finite-dimensional space.
\vskip 4pt

\noindent{\bf \textit{Comparison with recent literature and further prospects}}.
A few weeks before we put this work on arXiv, another arXiv pre-publication was published (\cite{RenWang}) in which the authors introduce, on the space of probability measures, a Dirichlet form whose construction has some similarities with the construction of the rearranged SHE that we introduce here. Note that the results of the two papers do not overlap, but an in-depth study would be necessary to link the two constructions more properly. In short, the work \cite{RenWang} aims at projecting on the space of probability measures a Gaussian measure constructed on an $L^2$ space of random variables
and then at considering, under this measure, the Dirichlet form generated by the Riemannian metric on ${\mathcal P}_2({\mathbb R}^d)$ (with 
$d \geq 1$). For example, in $1d$, this Gaussian measure can be the invariant measure of the SHE driven by a cylindrical white noise. Although 
this example (in $1d$) does not fit our assumptions (since we need the noise to be coloured), it is worth noting that, if we had to write formally the generator of the rearranged SHE in this case, it would be different from the one computed in \cite[Theorem 4.1]{RenWang}.  

We also highlight that our construction has a simple particle interpretation. At each time step of the Euler scheme, we can indeed consider a particle approximation of the SHE, as given for example by a finite volume discretisation. Then, 
at the end of each time step, the rearrangement operation, when implemented on the particles, simply consists in ordering them. We do not discuss this further in the rest of the article (for obvious reasons of length).

The reader may wonder about higher dimensional extensions. Although this is indeed a natural equation, we think it is useful to recall that many of the aforementioned works (notably those concerning the construction of 
a Wasserstein diffusion) are also in one dimension. From this point of view, this limitation in our model should not come as a surprise. 
As for the possible ways to extend the construction to the case  $d \geq 2$, one possibility is to use the tools of optimal transport (\cite{BrenierPolar}), but this perspective is open at this stage.
The reader may also worry about the fact that, in  dimension $d \geq 2$, the stochastic heat equation (when driven by the Laplace operator) requires a coloured noise, 
of a higher regularity than what we use here. 
In fact, this would be 
only the case if we considered the stochastic heat equation on a space of dimension $d$ (typically the $d$-dimensional torus). Actually, our belief is that 
we could define the stochastic heat equation on the $1d$ torus, but regard it 
as a system of $d$ equations.
That said, another possibility could be to replace the Laplacian by another operator.  

\vskip 4pt

\noindent{\bf \textit{Organisation of the paper}}.
 We introduce some preliminary material in 
 Section  
\ref{sec:rearrangement}, including some (known) results on 
the symmetric rearrangement 
on ${\mathbb S}$.
Section 
\ref{sec:2}
is dedicated to the 
analysis of the approximating scheme. In particular, the reader will find all the required assumptions on the noise in 
the introduction of 
Section \ref{sec:2}. 
Tightness is established in 
Proposition 
\ref{prop:tightness}. 
The definition of a solution to the rearranged SHE is 
clarified in 
Section 
\ref{sec:3}, see Definition \ref{def:existence}. 
Existence and uniqueness are guaranteed by 
Theorem 
\ref{thm:main:existence:uniqueness} . 
The smoothing properties of the semigroup is studied in 
Section 
\ref{sec:4}, the main Lipschitz estimate being stated in 
Theorem 
\ref{thm:main:lipschitz}.

\section{Preliminary Material}
\label{sec:rearrangement}

\subsection{The symmetric non-increasing rearrangement}

{Throughout}, the circle {${\mathbb S}$} is chosen to be parametrised by the interval $(-1/2,1/2]$ and $0$ is regarded as 
a privileged fixed point on the circle, {i.e., ${\mathbb S}:=({\mathbb R}+1/2)/{\mathbb Z}$}.
 
\begin{proposition}
\label{prop:rearrangement:def}
Given a measurable function $f : \bS \rightarrow {\mathbb R}$, there exists a unique function, called symmetric non-increasing rearrangement of $f$ and denoted $f^* : \bS \rightarrow [-\infty,+\infty]$, that satisfies the following two properties:
\begin{enumerate}
\item $f^*$ is symmetric (with respect to $0$), is non-increasing and right-continuous on the interval $[0,1/2)$,
and is left-continuous at $1/2$ (left- and right-continuity being here understood for the topology on $[-\infty,+\infty]$),
\item \textbf{Cavalieri's principle:} the image of the Lebesgue measure $\textrm{\rm Leb}_{{\mathbb S}}$
by $f^*$ is the same as the image of the Lebesgue measure by $f$, namely, for all $ a \in {\mathbb R}$, $
\textrm{\rm Leb}_{{\mathbb S}}(\{ x \in {\mathbb S} : f^*(x) \leq a\}) 
= 
 \textrm{\rm Leb}_{{\mathbb S}}(\{ x \in {\mathbb S} : f(x) \leq a\})$.
\end{enumerate}
\end{proposition}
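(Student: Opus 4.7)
The plan is to reduce the statement to the classical existence and uniqueness of the non-increasing rearrangement of a measurable function on the unit interval, and then to transfer the construction to the circle $\bS$ by a folding around the base point $0$.

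For existence, I would first introduce the distribution function $\mu_f(a) := \textrm{\rm Leb}_{\bS}(\{f \leq a\})$, which is non-decreasing, right-continuous, and tends to $0$ and $1$ as $a \to -\infty$ and $a \to +\infty$ respectively. Its left-continuous generalised inverse
\begin{equation*}
h(t) := \inf \bigl\{ a \in \bR : \mu_f(a) > 1-t \bigr\}, \quad t \in (0,1],
\end{equation*}
(with the conventions $\inf \emptyset = +\infty$ and $h(0) := \lim_{t \to 0^+} h(t)$) is then, by standard arguments on generalised inverses of monotone functions, a non-increasing, left-continuous function on $(0,1]$ whose pushforward of $\textrm{\rm Leb}_{[0,1]}$ coincides with the pushforward of $\textrm{\rm Leb}_{\bS}$ by $f$. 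Identifying $\bS$ with $(-1/2, 1/2]$, I would then set $f^*(x) := h(2 \lvert x \rvert)$. Symmetry holds by construction; monotonicity and left-continuity on $[0, 1/2)$ transfer from the corresponding properties of $h$ on $(0,1)$ via the change of variable $t = 2x$; and right-continuity of $f^*$ at $1/2$ in the circle sense reduces, after using the identity $f^*(-y) = f^*(y)$, to the left-continuity of $h$ at the endpoint $t = 1$. Cavalieri's principle then follows from the fact that the map $x \mapsto 2 \lvert x \rvert$ pushes $\textrm{\rm Leb}_{\bS}$ forward onto $\textrm{\rm Leb}_{[0,1]}$, so that $\textrm{\rm Leb}_{\bS}(\{f^* \leq a\}) = \textrm{\rm Leb}_{[0,1]}(\{h \leq a\}) = \mu_f(a)$.

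For uniqueness, I would observe that any candidate $g$ meeting the two properties is, by symmetry, determined by its restriction to $[0, 1/2]$. Rescaling through $\tilde g(t) := g(t/2)$ yields a non-increasing, left-continuous function on $(0,1]$ whose pushforward of $\textrm{\rm Leb}_{[0,1]}$ equals $\mu_f$ by Cavalieri's principle applied to $g$. The classical uniqueness statement for the non-increasing rearrangement on the unit interval then forces $\tilde g = h$ Lebesgue-almost everywhere; the joint monotonicity plus one-sided continuity constraints upgrade this to pointwise equality on $(0,1]$, and hence $g = f^*$ everywhere on $\bS$.

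The only delicate points in this programme are the careful bookkeeping at the distinguished points $0$ and $1/2$ and a consistent treatment of the extended real values $\pm \infty$ at the endpoints. The genuine analytic content—existence and uniqueness of the non-increasing rearrangement on $[0,1]$—is entirely classical and I would import it without reproof.
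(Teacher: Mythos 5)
Your construction is correct, and it follows a genuinely different route from the paper: the paper offers no proof of Proposition \ref{prop:rearrangement:def} at all, treating it as a known fact from the symmetrization literature and pointing the reader to Baernstein (Definition 1.29 for the Euclidean notion and Chapter 7 for the spherical/circular case). You instead give a self-contained argument: existence by folding the generalised inverse $h$ of the distribution function $a \mapsto \mathrm{Leb}_{\mathbb S}(\{f \le a\})$ around the base point, so that $f^*(x) = h(2\lvert x \rvert)$, with Cavalieri's principle obtained from the fact that $x \mapsto 2\lvert x\rvert$ pushes $\mathrm{Leb}_{\mathbb S}$ onto $\mathrm{Leb}_{[0,1]}$; and uniqueness by unfolding a candidate $g$ to a non-increasing, left-continuous function on $(0,1]$ with the same pushforward as $h$, which forces equality a.e.\ and then everywhere by one-sided continuity. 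This buys an elementary, quantile-transform proof that also makes transparent why $f^*(0)$ is the essential supremum of $f$ and $f^*(1/2)$ its essential infimum, at the cost of the endpoint bookkeeping you acknowledge; the paper's citation route imports the whole package (and the further rearrangement inequalities it needs later) from a single reference.

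One point should be made explicit rather than left as ``bookkeeping'': in the uniqueness step, the value of a candidate $g$ at the single point $0$ is pinned down only if ``left-continuous on $[0,1/2)$'' is read on the circle, i.e., through negative arguments, which by symmetry amounts to $g(0)=\lim_{x \downarrow 0} g(x)$. Under a purely one-sided reading the condition is vacuous at $0$, and raising $g$ at that one point would change neither monotonicity nor Cavalieri's principle, so uniqueness at $x=0$ would fail. Your existence construction implicitly adopts the circle reading by setting $f^*(0):=\lim_{t\to 0^+}h(t)$; you should invoke the same reading when upgrading $\tilde g = h$ on $(0,1]$ to $g=f^*$ at $x=0$. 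The analogous check at $1/2$, which you do carry out via symmetry and left-continuity of $h$ at $t=1$, is exactly right.
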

Intuitively, $f^*$ should be regarded as a quantile function, the symmetrisation procedure here forcing an obvious form of `continuous periodicity' 
(whose interpretation requires some care as $f^*$ may have jumps).
Indeed, it must be noted that 
the collection of functions $f^*$ satisfying item 1 in the definition above are one-to-one with the set ${\mathcal P}_2({\mathbb R})$ of probability measures on ${\mathbb R}$
that have a finite-second moment. In fact, for $f^*$ as in item 1 and for a probability measure $\mu \in {\mathcal P}_2({\mathbb R})$, 
the measure $\textrm{\rm Leb}_{\mathbb S} \circ (f^*)^{-1}$ is equal to $\mu$ if and only if 
$x \in [0,1] \mapsto f^*((1-x)/2)$ coincides with the usual quantile function, i.e. the usual generalised inverse of the (right-continuous) cumulative distribution function. 
The reader is referred to Baernstein \cite{baernstein2019symmetrizationInAnalysis} for further details,
see in particular Definition 1.29 therein for the general definition of symmetric rearrangements in the Euclidean setting and Chapter 7 in the same book for a specific treatment of spherical symmetric rearrangements. We use the following quite often:
\begin{definition}
\label{def:1.2:U2}
A function $f  : {\mathbb S} \rightarrow {\mathbb R}$ 
is said to be symmetric non-increasing if $f=f^*$.  
 The collection of equivalence classes in $L^2({\mathbb S})$ containing a symmetric non-increasing function is denoted  by $U^2({\mathbb S})$.
 
It is a cone. 
\end{definition}
 
Below, we often consider elements of $L^2_{\rm sym}({\mathbb S})$. They are defined as functions in $L^2(\bS)$ that are Lebesgue almost everywhere symmetric. 
One of these elements is said to be \emph{non-increasing} (we refrain from tautological use of the word symmetric given the context of the circle) if it coincides almost everywhere with an element of 
$U^2({\mathbb S})$. Notice that we may choose the latter representative to be uniquely defined as a symmetric non-increasing function. Indeed, two elements of $U^2({\mathbb S})$ that coincide in $L^2({\mathbb S})$ 
coincide in fact everywhere on ${\mathbb S}$ (courtesy of the left- and right-continuity properties). 
Also, the following proposition is of clear importance. 
\begin{proposition}
\label{prop:closedness} $L^2_{\rm sym}({\mathbb S})$ and $U^2({\mathbb S})$ are closed subsets of $L^2({\mathbb S})$ equipped with $\| \cdot \|_2$. 
\end{proposition}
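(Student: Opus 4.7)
The plan is to treat the two assertions separately, using quite different tools.

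For $L^2_{\rm sym}({\mathbb S})$, note that the composition operator $T : f \mapsto f \circ \sigma$, with $\sigma(x) = -x$ the involution of ${\mathbb S}$ about $0$, is a well-defined isometry of $L^2({\mathbb S})$ (the Lebesgue measure on ${\mathbb S}$ being invariant under $\sigma$). Hence $I - T$ is a continuous endomorphism of $L^2({\mathbb S})$, and
\begin{equation*}
L^2_{\rm sym}({\mathbb S}) = \ker(I-T)
\end{equation*}
is closed. Equivalently, the symmetrisation map $f \mapsto (f + f\circ \sigma)/2$ is an orthogonal projection onto $L^2_{\rm sym}({\mathbb S})$, whose image is therefore closed.

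For $U^2({\mathbb S})$, the argument I would build on is the classical $L^2$-nonexpansivity of the symmetric non-increasing rearrangement,
\begin{equation}
\label{eqn:nonexpansivity:plan}
\|f^* - g^*\|_2 \leq \|f-g\|_2, \qquad f,g \in L^2({\mathbb S}),
\end{equation}
which is a well-known consequence of the equimeasurability stated in Proposition \ref{prop:rearrangement:def} together with the Hardy-Littlewood inequality $\int f \, g \leq \int f^* g^*$; see Baernstein \cite{baernstein2019symmetrizationInAnalysis}. Granted \eqref{eqn:nonexpansivity:plan}, let $(f_n)_n \subset U^2({\mathbb S})$ converge to some $f \in L^2({\mathbb S})$. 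Since $f_n = f_n^*$ in $L^2({\mathbb S})$, applying \eqref{eqn:nonexpansivity:plan} to $f$ and $f_n$ yields $\|f^* - f_n\|_2 = \|f^* - f_n^*\|_2 \leq \|f - f_n\|_2 \to 0$, so that $f_n \to f^*$ in $L^2({\mathbb S})$. By uniqueness of the limit, $f = f^*$ almost everywhere, i.e.\ the equivalence class of $f$ contains a symmetric non-increasing representative, which is the definition of $f \in U^2({\mathbb S})$.

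I expect the only delicate point to be the invocation of \eqref{eqn:nonexpansivity:plan}. Should one prefer a self-contained argument, an elementary alternative is to pass to an a.e.\ convergent subsequence $f_{n_k} \to f$ and observe that the pointwise symmetry and the pointwise monotonicity $f_{n_k}(x) \geq f_{n_k}(y)$ for $0 \leq x \leq y \leq 1/2$ survive on a full-measure subset $A \subset {\mathbb S}$. One then produces a canonical representative in $U^2({\mathbb S})$ by setting $\tilde f(x) := \lim_{y \uparrow x, y \in A} f(y)$ for $x \in (0, 1/2]$, extending by right-continuity at $1/2$ and by symmetry to $[-1/2, 0)$; the resulting $\tilde f$ satisfies the conditions of item~1 of Proposition \ref{prop:rearrangement:def} and agrees with $f$ almost everywhere. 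The nonexpansivity approach is however the cleaner route and avoids these bookkeeping subtleties.
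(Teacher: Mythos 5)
Your proof is correct and follows essentially the same route as the paper: closedness of $L^2_{\rm sym}({\mathbb S})$ by passing to the limit in the symmetry relation (your kernel-of-$(I-T)$ phrasing is just a repackaging of testing against $\varphi \in L^2({\mathbb S})$), and closedness of $U^2({\mathbb S})$ via the non-expansion property of Lemma \ref{lem:nonexpansion}, applied exactly as in the paper to get $f_n \to f^*$ and hence $f = f^*$.
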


\begin{proof} Closedness of 
$L^2_{\rm sym}({\mathbb S})$ is obvious. Closedness of 
$U^2({\mathbb S})$ follows from Lemma 
\ref{lem:nonexpansion} below:
if  
$(f_n)_{n \geq 1}$  in 
$U^2({\mathbb S})$ converges to some $f \in L^2_{\rm sym}({\mathbb S})$, then 
$f=f^*$.
\qed
\end{proof}

\subsection{Reformulating the main results}

{Our} diffusion process with suitable smoothing properties
on 
${\mathcal P}_2({\mathbb R})$
{arrives} via the construction of a diffusion process with values in $U^2({\mathbb S})$. 
 The equivalence 
relies on the fact that the mapping $f^* \in U^2({\mathbb S}) \mapsto 
\text{Leb}_{\mathbb S} \circ (f^*)^{-1} \in {\mathcal P}_2({\mathbb R})$ is {an isometry} when ${\mathcal  P}_2({\mathbb R})$ is equipped with the ${\mathcal W}_2$-{Wasserstein} distance, i.e., 
for any $f^*$, $g^*$ in $U^2({\mathbb S})$, 
\begin{equation*}
\| f^* - g^* \|_2 = {\mathcal W}_2
\bigl( \text{Leb}_{\mathbb S} \circ (f^*)^{-1},
\text{Leb}_{\mathbb S} \circ (g^*)^{-1}
\bigr), 
\end{equation*}
\begin{equation*}
	{\text{where}} \quad 
{\mathcal W}_2(\mu,\nu)^2 := \inf_{\pi \in {\mathcal P}({\mathbb R^2})) : \pi \circ e_x^{-1}=\mu, \pi \circ e_y^{-1}=\nu}
\int_{{\mathbb R}^2} \vert x-y \vert^2 \pi(dx,dy), 
\end{equation*} with $e_x:(x,y)\in {\mathbb R}^2 \mapsto x$ and
$e_y:(x,y)\in {\mathbb R}^2 \mapsto y$
being the two evaluation mappings on 
${\mathbb R}^2$. 
This identity is a consequence of Lemma \ref{lem:nonexpansion}, since for any $\pi$ as above, there exist two (measurable) functions 
$f$ and $g$ from ${\mathbb S}$ to ${\mathbb R}$ such that 
$\pi=\text{Leb}_{\mathbb S} \circ (f,g)^{-1}$.

In this framework, our main results can be (re)formulated as follows:
\begin{enumerate}
\item We introduce a 
stochastic differential equation on $U^2({\mathbb S})$ 
in the form of a reflected (or rearranged) stochastic equation
on $L^2({\mathbb S})$  
whose reflection term forces 
solutions to stay within the cone $U^2({\mathbb S})$, whenever
they are initialised from 
$U^2({\mathbb S})$, see
Theorem \ref{thm:main:existence:uniqueness}. 
Solutions induce
a Lipschitz continuous flow with values in 
$U^2({\mathbb S})$.
The construction of the rearranged equation relies on 
an Euler scheme, in which we alternate 
some flat dynamics in the space 
$L^2_{\rm sym}({\mathbb S})$ with 
the rearrangement operation that projects back the solution onto 
$U^2({\mathbb S})$. 

\item The second main statement is Theorem  
\ref{thm:main:lipschitz}, which says that the semigroup generated by our rearranged stochastic equation 
maps bounded measurable functions
on 
$U^2({\mathbb S})$
 into Lipschitz continuous 
 functions on
 $U^2({\mathbb S})$. 
 Recast on ${\mathcal P}_2({\mathbb R})$ (through the isometry between 
$U^2({\mathbb S})$ and ${\mathcal P}_2({\mathbb R})$), we get in this way a 
semigroup that maps bounded measurable functions 
on ${\mathcal P}_2({\mathbb R})$ into Lipschitz continuous functions (with respect 
to the $2$-Wasserstein distance ${\mathcal W}_2$).
\end{enumerate}

\subsection{Key properties of the symmetric non-increasing rearrangement} 

In the subsection, we expand a list of  {useful} properties that are satisfied by $f^*$. The first one just follows from item 2 in the statement of 
Proposition  
\ref{prop:rearrangement:def}.

\begin{lemma}[Preservation of $L^p$ norms]
\label{lem:isometry}
With the same 
notations as in Proposition 
\ref{prop:rearrangement:def}, we have, for any $p \in [1,\infty]$, 
$ \lVert f^* \rVert_p = \lVert f  \rVert_p$.
\end{lemma}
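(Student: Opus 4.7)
The plan is to deduce this lemma directly from item~2 of Proposition~\ref{prop:rearrangement:def}, which asserts that $f$ and $f^\ast$ induce the same push-forward of $\textrm{Leb}_{\mathbb S}$. In other words, viewing both as random variables on the probability space $(\mathbb{S},\textrm{Leb}_{\mathbb S})$, the two have identical laws. Since $L^p$ norms depend on the function only through its distribution, the equality $\|f^\ast\|_p=\|f\|_p$ should follow at once.

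For the finite case $p \in [1,\infty)$, I would first reduce to $f \geq 0$ by noting that $|f|$ and $|f^\ast|$ also share the same distribution (the absolute value being a measurable function of the original). Then I would apply the layer-cake formula
\begin{equation*}
\int_{\mathbb S} |f(x)|^p \, dx \;=\; p \int_0^\infty t^{p-1} \, \textrm{Leb}_{\mathbb S}\bigl(\{ x \in \mathbb{S} : |f(x)| > t\}\bigr) \, dt,
\end{equation*}
and the analogous identity for $f^\ast$. The integrands coincide for all $t \geq 0$ by item~2 of Proposition~\ref{prop:rearrangement:def} applied to $|f|$ and $|f^\ast|$ (passing from $\{\cdot \leq a\}$ to $\{\cdot > a\}$ by taking complements in $\mathbb{S}$), hence the two $p$-th moments agree. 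Alternatively, one could simply invoke the change-of-variables / transfer formula $\int \varphi \circ f \, d\textrm{Leb}_{\mathbb S} = \int \varphi \, d(\textrm{Leb}_{\mathbb S} \circ f^{-1})$ with $\varphi(x) = |x|^p$, which makes the argument a one-liner once Cavalieri is available.

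For the case $p=\infty$, I would unfold the definition of the essential supremum, namely $\|f\|_\infty = \inf\{a \geq 0 : \textrm{Leb}_{\mathbb S}(\{|f| > a\}) = 0\}$, and note that this infimum is entirely determined by the distribution of $|f|$. Again by item~2 of Proposition~\ref{prop:rearrangement:def} applied to $|f|$ and $|f^\ast|$, the set of admissible $a$'s coincides for $f$ and $f^\ast$, so the two essential suprema are equal.

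There is no real obstacle here: the lemma is essentially a tautology once Cavalieri's principle is in hand, and the only mild care needed is to observe that replacing $f$ by $|f|$ does not spoil the equality of distributions, which is immediate from measurability of $x \mapsto |x|$.
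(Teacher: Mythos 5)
Your argument is correct and is exactly the route the paper takes: the paper simply notes that the lemma is an immediate consequence of item 2 of Proposition \ref{prop:rearrangement:def} (equimeasurability), which is precisely what you spell out via the layer-cake formula and the distributional characterisation of the essential supremum. No gap; your write-up just makes the "obvious" step explicit.
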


The next result, called the  Hardy-Littlewood inequality, is fundamental. 

\begin{lemma}[Hardy-Littlewood inequality]
\label{lem:hardy}
Let $f$ and $g$ be two measurable real-valued functions defined on 
${\mathbb S}$ such that $\|f \|_p < \infty$ and $\|g \|_q < \infty$, for $p,q \in [1,\infty]$, 
with $1/p+1/q=1$. Then, 
$$ \int_{\bS} f(x)g(x) dx \leq \int_{\bS} f^*(x)g^*(x) dx.$$
\end{lemma}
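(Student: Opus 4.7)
The plan is to establish this inequality first in the non-negative case using the classical ``double layer-cake'' representation, and then to recover the signed case via an elementary translation argument.

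Assume first that $f,g\geq 0$. Writing $h(x)=\int_0^\infty \mathbf{1}_{\{h(x)>t\}}\,dt$ for $h\in\{f,g\}$ and applying Fubini's theorem, one rewrites
\begin{equation*}
\int_{\mathbb S} f(x)g(x)\,dx = \int_0^\infty\!\!\int_0^\infty \textrm{Leb}_{\mathbb S}\bigl(\{f>s\}\cap \{g>t\}\bigr)\,ds\,dt,
\end{equation*}
and the same identity holds with $(f,g)$ replaced by $(f^*,g^*)$. The key geometric input is then twofold: on one hand, for any measurable $A,B\subset {\mathbb S}$, $\textrm{Leb}_{\mathbb S}(A\cap B)\leq \min(\textrm{Leb}_{\mathbb S}(A),\textrm{Leb}_{\mathbb S}(B))$; on the other hand, because $f^*$ and $g^*$ are symmetric and non-increasing on $[0,1/2)$ by item~1 of Proposition~\ref{prop:rearrangement:def}, their super-level sets $\{f^*>s\}$ and $\{g^*>t\}$ are symmetric intervals of ${\mathbb S}$ centered at $0$, hence are nested, which promotes the previous inequality to an equality for the pair $(f^*,g^*)$. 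Combining these observations with Cavalieri's principle (item~2 of Proposition~\ref{prop:rearrangement:def}) to identify $\textrm{Leb}_{\mathbb S}(\{f>s\})=\textrm{Leb}_{\mathbb S}(\{f^*>s\})$ and likewise for $g$, one obtains the pointwise bound $\textrm{Leb}_{\mathbb S}(\{f>s\}\cap\{g>t\})\leq \textrm{Leb}_{\mathbb S}(\{f^*>s\}\cap\{g^*>t\})$, and integrating in $(s,t)$ delivers the inequality in the non-negative case.

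For the general case, I would exploit two stability properties of the rearrangement. A standard truncation argument combined with H\"older's inequality and dominated convergence reduces the problem to the case where $f$ and $g$ are bounded. Then, for any constant $c\in{\mathbb R}$, the function $f^*+c$ is still symmetric, non-increasing, and left-continuous on $[0,1/2)$, and its distribution coincides with that of $f+c$, so the uniqueness part of Proposition~\ref{prop:rearrangement:def} yields $(f+c)^*=f^*+c$. Choosing $c$ large enough that $f+c$ and $g+c$ are non-negative and applying the already established inequality to that pair, one expands the products on both sides; the cross and constant terms cancel thanks to the integrated form of Cavalieri's principle $\int_{\mathbb S} f=\int_{\mathbb S} f^*$ (and similarly for $g$), leaving exactly $\int_{\mathbb S} fg\leq \int_{\mathbb S} f^* g^*$.

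The only genuinely delicate point is the geometric claim that the super-level sets of $f^*$ and $g^*$ are symmetric intervals centered at $0$ and that any two such intervals are nested. Since ${\mathbb S}$ has total mass one, these super-level sets may saturate the whole circle, and some care is needed near the endpoints $\pm 1/2$ using the left-continuity at $1/2^-$ and the right-continuity at $1/2$ imposed in Proposition~\ref{prop:rearrangement:def}; once this geometry is pinned down, the remainder is Fubini's theorem and elementary bookkeeping.
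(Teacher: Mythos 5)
Your proposal is correct, but note that the paper does not prove Lemma~\ref{lem:hardy} at all: it quotes it from the literature, namely \cite[Corollary 2.16]{baernstein2019symmetrizationInAnalysis} for non-negative functions in the Euclidean setting and \cite[Section 7.3]{baernstein2019symmetrizationInAnalysis} (together with \cite{Baernstein_correction,baernstein1989convRearrOnCirc}) for the signed version on the circle. Your argument is the classical self-contained route, and it is sound: the double layer-cake formula plus the observation that the super-level sets of $f^*$ and $g^*$ are nested symmetric arcs centred at $0$, combined with equimeasurability (item~2 of Proposition~\ref{prop:rearrangement:def}), gives the non-negative case; the signed case then follows precisely because ${\mathbb S}$ has finite total mass, so that adding a constant preserves integrability, $(f+c)^*=f^*+c$ (which you correctly justify via the uniqueness part of Proposition~\ref{prop:rearrangement:def}), and the extra terms cancel using $\int_{\mathbb S} f=\int_{\mathbb S} f^*$. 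This is essentially how the cited references proceed, and what your write-up buys is self-containedness. Two small points to tighten: in the truncation step you implicitly need that truncation commutes with the rearrangement, i.e. $\bigl(\max(\min(f,N),-N)\bigr)^*=\max(\min(f^*,N),-N)$, which again follows from the uniqueness characterisation in Proposition~\ref{prop:rearrangement:def} (a continuous non-decreasing function of a symmetric non-increasing, suitably one-sided continuous function retains those properties and has the correct distribution); and the domination needed to pass to the limit on the rearranged side is supplied by Lemma~\ref{lem:isometry} ($\|f^*\|_p=\|f\|_p$, $\|g^*\|_q=\|g\|_q$) together with H\"older's inequality. With these remarks made explicit, the proof is complete.
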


 We refer to \cite[Corollary 2.16]{baernstein2019symmetrizationInAnalysis} 
 for a general statement in the Euclidean setting, but stated under the conditions that 
 $f$ and $g$ are non-negative, 
 and to 
 \cite[Section 7.3]{baernstein2019symmetrizationInAnalysis}
 or
 \cite{Baernstein_correction,baernstein1989convRearrOnCirc}
for a version without non-negativity constraints that is specifically stated on the circle. We now turn to the well-known property of non-expansion:
 \begin{lemma}[Non-expansion property]
 \label{lem:nonexpansion}
 Let $f$ and $g$ be two measurable real-valued functions 
 with $\|f \|_p < \infty$ and $\|g \|_p < \infty$, for $p \in [1,\infty]$.
 Then, $ \lVert f^*-g^* \rVert_p \leq \lVert f-g \rVert_p.$ 
 \end{lemma}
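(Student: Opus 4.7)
The plan is to distinguish three cases: $p = 2$, $p = \infty$, and $p \in [1,\infty) \setminus \{2\}$. The first two are quickly dispatched using the tools already at hand, while the remaining values of $p$ are treated by reduction to the discrete Hardy-Littlewood-P\'olya rearrangement inequality via approximation by step functions.

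For $p=2$, I would simply expand
\[
\|f^*-g^*\|_2^2 = \|f^*\|_2^2 + \|g^*\|_2^2 - 2 \int_{\bS} f^*(x) g^*(x) \, dx,
\]
use Lemma \ref{lem:isometry} to replace $\|f^*\|_2$ and $\|g^*\|_2$ by $\|f\|_2$ and $\|g\|_2$, and use Lemma \ref{lem:hardy} with $p = q = 2$ to obtain $\int_{\bS} fg \, dx \leq \int_{\bS} f^* g^* \, dx$; combining yields the claim. For $p = \infty$, the plan relies on two elementary facts, both direct consequences of Cavalieri's principle: the rearrangement commutes with translation by a constant, $(f+c)^* = f^* + c$, and it preserves pointwise order, $f \leq g$ almost everywhere implying $f^* \leq g^*$ everywhere on ${\mathbb S}$ (since their super-level sets are then nested in Lebesgue measure, and the rearrangement depends only on the distribution function). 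Writing $g - M \leq f \leq g + M$ almost everywhere with $M := \|f-g\|_\infty$, these two properties give $g^* - M \leq f^* \leq g^* + M$, hence $\|f^*-g^*\|_\infty \leq M$.

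For general $p \in [1,\infty) \setminus \{2\}$, I would approximate $f$ and $g$ in $L^p$ by step functions $f_N, g_N$ of the form $\sum_{i=1}^{2N} a_i \mathbf{1}_{I_i}$ on a symmetric partition $(I_i)_{i=1}^{2N}$ of ${\mathbb S}$ into intervals of equal length. The rearrangement of such a step function is again a step function on the same partition whose values are obtained by sorting the $a_i$ in the symmetric non-increasing order of Proposition \ref{prop:rearrangement:def}, and the claim reduces to the discrete rearrangement inequality $\sum_i |a_i^\downarrow - b_i^\downarrow|^p \leq \sum_i |a_i - b_i|^p$, established by a standard two-index transposition argument based on the convexity of $t \mapsto |t|^p$ (swapping an inverted pair only decreases the cost).

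The main obstacle is the passage to the limit $N \to \infty$. For this, I would first use Cavalieri's principle to show that $L^p$-convergence $f_N \to f$ yields convergence of the distribution functions at every continuity point, and hence pointwise almost everywhere convergence $f_N^* \to f^*$; the equi-$L^p$-integrability provided by Lemma \ref{lem:isometry} (which gives $\|f_N^*\|_p = \|f_N\|_p \to \|f\|_p$) then upgrades this to strong $L^p$-convergence by Vitali's theorem, and similarly for $g_N^* \to g^*$. Passing to the limit in the step-function inequality $\|f_N^* - g_N^*\|_p \leq \|f_N - g_N\|_p$ closes the proof. This final passage to the limit is the subtle part, since the rearrangement operation is nonlinear and does not interact naturally with weak convergence.
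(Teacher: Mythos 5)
The paper offers no proof of this lemma at all: it is quoted from Baernstein's book (Corollary 2.23 there for the Euclidean, positive-valued case, and Section 7.3 for the circle without positivity), so your self-contained argument is a genuinely different route. In substance it is correct: the $p=2$ case via Lemmas \ref{lem:isometry} and \ref{lem:hardy} is exactly the polarisation trick the paper itself uses in the proof of Lemma \ref{lem:heatpreservessymmetry}; the $p=\infty$ case via $(f+c)^*=f^*+c$ and preservation of the a.e.\ order is sound, both facts following from the uniqueness part of Proposition \ref{prop:rearrangement:def}; and the discretisation for $p\in[1,\infty)$ correctly reduces the claim to the classical sorting inequality for finite sequences, with the limit passage handled properly ($L^p$-convergence gives convergence in measure, hence pointwise convergence of the monotone rearrangements off a countable set, and convergence of the norms, supplied by Lemma \ref{lem:isometry}, upgrades this to strong $L^p$-convergence). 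What your route buys is a proof using only the tools already listed in Section \ref{sec:rearrangement}; what the citation buys is brevity and, implicitly, the more general convex non-expansion estimate $\int\Phi(f^*-g^*)\le\int\Phi(f-g)$, of which your statement is the case $\Phi=\vert\cdot\vert^p$.

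One detail to repair in the discrete step: the symmetric non-increasing rearrangement of a step function on $2N$ equal intervals is \emph{not} a step function on the same partition. Each sorted value must occupy a set symmetric about $0$ of measure $1/(2N)$, so the level sets of $f_N^*$ have endpoints at multiples of $1/(4N)$; thus $f_N^*$ and $g_N^*$ live on the symmetric refinement of mesh $1/(4N)$, each sorted value occupying two cells (one on either side of $0$, except the extreme values around $0$ and $1/2$). This does not damage the argument: restricted to the half-circle $[0,1/2)$, both rearrangements are the ordinary non-increasing arrangements of their $2N$ values on the same fine grid, so one still has $\lVert f_N^*-g_N^*\rVert_p^p=\tfrac{1}{2N}\sum_{j}\vert a_{(j)}-b_{(j)}\vert^p$ while $\lVert f_N-g_N\rVert_p^p=\tfrac{1}{2N}\sum_i \vert a_i-b_i\vert^p$, and the transposition argument applies verbatim; you should simply state the reduction in this form rather than claiming the rearranged step function lives on the original partition.
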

 We refer to \cite[Corollary 2.23]{baernstein2019symmetrizationInAnalysis}
 for the Euclidean setting (which requires $f$ and $g$ to be positive valued) and 
 to  \cite[Section 7.3]{baernstein2019symmetrizationInAnalysis}
 for the extension to the spherical setting (which no longer requires $f$ and $g$ to be positive valued).

The
following statement is taken from 
\cite{Baernstein_correction,baernstein1989convRearrOnCirc}, see also
\cite[Theorem 8.1]{baernstein2019symmetrizationInAnalysis}.

\begin{lemma}[Riesz rearrangement inequality]
\label{lem:Riesz}
Let $f$, $g$ and $h$ be three measurable real-valued functions on 
${\mathbb S}$, such that $\| f \|_p < \infty$, 
$\| g \|_q < \infty$ and 
$\| h \|_r < \infty$ for $p,q,r \in [1,\infty]$ with $1/p+1/q+1/r=1$. Then,
\begin{equation*}
\int_{\bS} \int_{\bS} f(x) g(x-y) h(y) dx dy \leq \int_{\bS} \int_{\bS} f^*(x) g^*(x-y) h^*(y) dx dy.
\end{equation*}
\end{lemma}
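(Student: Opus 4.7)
The plan is to follow the classical approach to Riesz's rearrangement inequality: first reduce to indicator functions via a layer-cake decomposition, then establish the resulting geometric inequality for sets by an iterative symmetrization on the circle.

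For the reduction, decompose $f(x) = \int_0^\infty [\mathbf{1}_{\{f>t\}}(x) - \mathbf{1}_{\{f<-t\}}(x)] \, dt$ and similarly for $g$ and $h$, and combine with Fubini and the identity $(\mathbf{1}_A)^* = \mathbf{1}_{A^*}$ coming directly from Cavalieri's principle (item 2 of Proposition \ref{prop:rearrangement:def}). Modulo routine bookkeeping of signs, using that the functional is linear in each slot and monotone under set inclusion in each slot, the inequality reduces to showing
\begin{equation*}
I(A,B,C) := \iint_{\mathbb{S}^2} \mathbf{1}_A(x) \mathbf{1}_B(x-y) \mathbf{1}_C(y) \, dx \, dy \leq I(A^*, B^*, C^*)
\end{equation*}
for all measurable $A, B, C \subseteq \mathbb{S}$.

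For the set version, first approximate $A, B, C$ in $L^1$ by finite unions of open arcs, using Lemma \ref{lem:nonexpansion} to pass to the limit in the rearrangements. One then iterates a polarization on $\mathbb{S}$: for a pole $p \in \mathbb{S} \setminus \{0, 1/2\}$, replace each set $A$ by the set $A^p$ obtained by reflecting, pair by pair, any mass lying on the ``far'' side of the diameter through $0$ and $p$ onto the ``near'' side whenever this brings the surviving representative closer to $0$. A direct four-term case analysis on the Cartesian product of antipodal pairs shows $I(A^p, B^p, C^p) \geq I(A,B,C)$, and iterating the polarization along a dense sequence of poles drives $(A, B, C)$ toward $(A^*, B^*, C^*)$ in $L^1$; the inequality then passes to the limit by continuity of $I$ under $L^1$-convergence of indicators.

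The main obstacle is the polarization inequality itself: on $\mathbb{R}^n$ the analogous step reduces to an elementary four-term rearrangement lemma, but on $\mathbb{S}$ the periodicity of the subtraction $(x,y) \mapsto x-y$ forces one to track which lift of $x-y$ actually appears in each contribution, and contributions wrap around when pairs straddle the antipodal cut. A cleaner alternative, which is presumably the route taken in \cite{Baernstein_correction, baernstein1989convRearrOnCirc}, is to lift to the universal cover $\mathbb{R}$, write $I(A,B,C)$ as a Euclidean Riesz functional applied to suitably periodised indicators restricted to a fundamental domain, apply the classical Brascamp-Lieb-Luttinger inequality, and pass to the limit. This last step crucially uses the one-dimensionality of $\mathbb{S}$, since the symmetric decreasing rearrangement on $\mathbb{R}$ induces the centred-arc rearrangement on $\mathbb{S}$ only in dimension one; higher-dimensional tori would require a substantially different argument.
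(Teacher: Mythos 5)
The paper does not prove this lemma at all: it is quoted as a known result, with the proof delegated to Baernstein's papers on convolution and rearrangement on the circle \cite{Baernstein_correction,baernstein1989convRearrOnCirc} and to \cite[Theorem 8.1]{baernstein2019symmetrizationInAnalysis}. So the relevant question is whether your sketch would stand as an actual proof, and as written it does not. First, your reduction of the signed case is flawed. The statement allows real-valued $f,g,h$, and the decomposition $f=\int_0^\infty[\mathbf{1}_{\{f>t\}}-\mathbf{1}_{\{f<-t\}}]\,dt$ does not lead to a term-by-term comparison: the contributions carrying an odd number of minus signs would need the \emph{reverse} inequality, and moreover $\{f^*<-t\}$ is an arc centred at the antipode $1/2$, not the centred rearrangement $\{f<-t\}^*$, so the rearranged layers do not reassemble into $f^*$ the way your bookkeeping assumes. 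The standard (and easy) fix on the circle is different: replace $f,g,h$ by $f+c_1$, $g+c_2$, $h+c_3$ with large constants, expand by trilinearity, and note that every cross term is rearrangement-invariant because $\int_{\bS}\int_{\bS} f(x)g(x-y)\,dx\,dy=\int_{\bS} f\int_{\bS} g$ and rearrangement preserves integrals (Lemma \ref{lem:isometry}); this uses the finiteness of $\mathrm{Leb}_{\bS}$ and reduces the signed statement to the non-negative one.

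Second, and more importantly, the heart of the matter — the set-level inequality $I(A,B,C)\le I(A^*,B^*,C^*)$ on $\bS$ — is exactly what you do not establish. Your polarization step polarizes all three sets about a pole $p\neq 0$, but if $\sigma$ denotes reflection in the diameter through $p$ then $\sigma x-\sigma y=-(x-y)\neq\sigma(x-y)$, so the middle slot is not polarized compatibly and the ``four-term case analysis'' does not close; you flag this wrap-around/difference-structure problem yourself but do not resolve it. Your fallback — lifting to $\bR$ and invoking Brascamp--Lieb--Luttinger on periodised indicators — is likewise not a routine limit argument: the periodic analogue of BLL on the circle is itself a theorem (Friedberg--Luttinger type), and it, or Baernstein's master-inequality/polarization machinery, is precisely the content of the references the paper cites. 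In short, your proposal is an outline whose non-trivial steps are either incorrect as stated (the signed reduction) or deferred to the very literature the paper already quotes, so it does not replace the citation with a proof.
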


\subsection{The heat kernel and the rearrangement operator}
We now address several basic properties of the composition of the rearrangement operator and the heat kernel. First, we recall that the periodic heat semigroup (with specific diffusivity parameter $1$) on the circle $\mathbb{S}$, which we denote $(e^{t\Delta})_{t \geq 0}$, has the following kernel (see Dym and McKean p.63 \cite{dymMcKean1972bookFourierSeriesAndInts}):
\begin{equation}
\label{eq:Gamma}
\Gamma_t(x):= \frac{1}{\sqrt{4\pi t}}\sum_{n\in\bZ} \exp\left\{
-\frac{(x-n)^2}{4t}\right\}, \quad t >0, \ x \in {\mathbb S}.
\end{equation}

\begin{lemma}
\label{lem:heatkernelisdecreasing}
For any $t>0$, the function $x \mapsto \Gamma_t(x)$ is non-increasing on $(0,1/2)$ and non-decreasing on 
$(-1/2,0)$. 
That is, 
$\Gamma_t(\cdot)=\Gamma_t(\cdot)^*$, the rearrangement acting on $x$.
\end{lemma} 
The proof of 
Lemma 
\ref{lem:heatkernelisdecreasing} is not 
trivial, due to the series underpinning the expression of {$\Gamma$}. 
The reader will find a general discussion on spherical heat kernels 
in the recent paper \cite{Nowak}, but specific (and much easier) computations that suffice for the proof of  the above statement can be found in 
\cite{Andersson}. We conclude this subsection with:  
\begin{lemma}
\label{lem:heatpreservessymmetry}
For  $f$ in $U^2({\mathbb S})$ and  $t>0$, the convolution $f*\Gamma_t(\cdot)$ is also in $U^2({\mathbb S})$.
\end{lemma}
\begin{proof}
Let $h= f*\Gamma_t(\cdot)$, for a given $t>0$. 
Lemmas
\ref{lem:Riesz}
and
\ref{lem:heatkernelisdecreasing}
yield 
\begin{equation*}
\| h \|_2^2 = 
\int_{\bS} \int_{\bS} f(x) {\Gamma_t(x-y)} h(y) dx dy
\leq 
\int_{\bS} \int_{\bS} f(x) {\Gamma_t(x-y)} h^*(y) dx dy
=
\langle h,h^* \rangle_2.
\end{equation*}
By the preservation of $L^p$ norms, 
$\| h - h^*\|_2^2 = \| h \|_2^2 + \| h^*\|_2^2 - 2 \langle h,h^* \rangle_2
= 2 \| h \|_2^2 - 2 \langle h,h^* \rangle_2 \leq 0$, 
which yields $h=h^*$ almost everywhere. Since $h$ is continuous (by convolution), so is $h^*$ (see 
\cite[Subsection 2.4]{baernstein2019symmetrizationInAnalysis}). Therefore, $h$ and $h^*$ coincide. 
\qed
\end{proof}
By combining Proposition 
\ref{prop:closedness} 
with Lemma 
\ref{lem:heatpreservessymmetry}, we obtain 
the following stronger closedness property: 
\begin{proposition}
\label{prop:closedness:H-1} 
{For any $a>0$, 
$\{ f \in L^2_{\rm sym}({\mathbb S}) :  \| f \|_2 \leq a \}$ and $\{ f \in U^2({\mathbb S}) : \| f \|_2 \leq a\}$} are closed subsets of $H_{\rm sym}^{-1}({\mathbb S})$ equipped with $\| \cdot \|_{2,-1}$. 
\end{proposition}

\begin{proof}
Take a {bounded} sequence $(f^n)_{n \geq 1}$ in 
$L^2_{\rm sym}({\mathbb S})$ 
that converges 
(for $\| \cdot \|_{2,-1}$)
to some $f \in H^{-1}_{\rm sym}({\mathbb S})$. 
By lower semi-continuity of the $L^2$-norm with respect to the 
$H^{-1}$-norm, we deduce that $f$ belongs to $L^2({\mathbb S})$ {with $\| f \|_2 \leq \liminf_{n \rightarrow \infty} \| f^n\|_2$}.

Assume now 
that the sequence 
$(f^n)_{n \geq 1}$ 
takes values 
in 
$U^2({\mathbb S})$. 
Then, for each $\varepsilon >0$, $f*\Gamma_\varepsilon= e^{\varepsilon \Delta} f $ 
is in 
$U^2({\mathbb S})$. 
This follows from the following two points. 
Firstly, for each $n \geq 1$, 
$e^{\varepsilon \Delta} f_n$ is in $U^2({\mathbb S})$ (as a consequence of 
Lemma \ref{lem:heatpreservessymmetry}). 
Secondly, 
$\| e^{\varepsilon \Delta} f_n - e^{\varepsilon \Delta}f \|_2$ tends to $0$ 
as $n$ tends to $\infty$. By closedness of $U^2({\mathbb S})$ with respect to 
the $L^2$ norm, we get that 
$e^{\varepsilon \Delta} f \in U^2({\mathbb S})$. 

Finally, 
since $f$ is in $L^2({\mathbb S})$,  
$\| e^{\varepsilon \Delta} f - f \|_2$ tends to $0$ with $\varepsilon$,
and we can invoke again the fact $U^2({\mathbb S})$ 
is closed with respect to the $L^2$ norm. 
\end{proof}
\color{black} 

\subsection{Some notation}
 
{We introduce a few notations related with functional and Fourier analysis.}
The space of continuous functions from one metric space $\cX$ to another, $\cY$, 
is denoted $\cC(\cX,\cY)$. For $k \geq 1$, we denote by $\cC^\infty_0( {\mathbb R}^k)$
the space of
infinitely differentiable real-valued functions on ${\mathbb R}^k$ with compact support.

We recall that $\bS$ is the circle parametrised by the interval of length $1$. Also, we let
\begin{equation*}
\begin{split}
e_m^{\Re} : x \in \bS \mapsto \sqrt{2} \cos(2 m \pi x), \quad 
e_m^{\Im} : x \in \bS \mapsto \sqrt{2} \sin(2 m \pi x),  
\end{split}
\end{equation*}
{for any  natural number $m$,
together 
with $e_0^{\Re} :\equiv 1$ and  
$e_0^{\Im} :\equiv 0$,}
form the complete Fourier basis on $L^2(\bS)$, where $L^2( \bS) $ is the space of square integrable functions on $\bS$. Usually, we just use the even (cosine) Fourier functions, which prompts us to use the shorter notation 
$e_m$ for $e_m^{\Re}$. 

The Lebesgue measure on ${\mathbb S}$ is denoted $\textrm{\rm Leb}_{{\mathbb S}}$ with $d\textrm{\rm Leb}_{{\mathbb S}}(x)$ written as $dx$. For any $p \geq 1$, we call $\| \cdot \|_p$
the $L^p$ norm on 
the space of measurable functions $f$ on $({\mathbb S},\textrm{\rm Leb}_{{\mathbb S}})$ with $\int_{\bS} |f(x)|^p dx<\infty$. 
Similarly, when 
$p=\infty$, the notation 
$\| \cdot \|_\infty$ is used for the $L^\infty$ (supremum) norm, i.e. $\| f \|_\infty:=\textrm{essup}\{f(x):x\in\bS\}$. 
The inner product between two elements $f$ and $g$ in $L^2({\mathbb S})$ is denoted $\langle f,g \rangle_2$, or $\langle f, g \rangle$, or  also $f \cdot g$ depending on the context. 
For an element $f \in L^1(\bS)$ and a non-negative integer $m$, we call $\hat{f}_m^{\Re} := \int_{\bS} f(x) e_m^{\Re}(x) dx$ the 
cosine
Fourier mode of $f$ of index $m$ and $\hat{f}_m^{\Im} := \int_{\bS} f(x) e_m^{\Im}(x) dx$
the sine Fourier mode of $f$ of index $m$. 
When $f$ is Lebesgue almost everywhere (written a.e. hereafter) symmetric, i.e. 
$f(-x)=f(x)$ {a.e.}, all the sine Fourier modes are $0$ and we write 
$\hat{f}_m=\langle f, e_m\rangle$ instead of $\hat{f}_m^{\Re}$. 
In that case, $\hat{f}^m$ is a real number. 
We denote by $L^2_{\rm sym}({\mathbb S})$ the set of functions $f$ in 
$L^2({\mathbb S})$ that are {a.e.} symmetric. More generally, for a parameter $\mu \in {\mathbb R}$, we denote by
$H^{\mu}_{\rm sym}({\mathbb S})$ the Sobolev space of  symmetric {functions/distributions}
$f$ such that
$\| f \|_{2,\mu}^2 := \sum_{m \in {\mathbb N}_0} (m \vee 1)^{2 \mu}  \hat{f}_m^2 < \infty$,
{with the notation $\hat{f}_m$ extending in an obvious manner to the distributional case}
(${\mathbb N}$ is the collection of natural numbers, and ${\mathbb N}_0 := {\mathbb N} \cup \{0\}$). Of course, $H^0_{\rm sym}({\mathbb S})$ is just $L^2_{\rm sym}(\bS)$. 
The inner product on 
$H^{\mu}_{\rm sym}({\mathbb S})$
is denoted $\langle f,g \rangle_{2,\mu}:=\sum_{m\in\bN_0} (m \vee 1)^{2 \mu} \hat{f}_m\hat{g}_m$. 

For any integer $k \geq 1$, we denote by 
${\mathcal C}^k({\mathbb S})$ the space of $k$-times continuously differentiable functions on 
${\mathbb S}$. For a real number $x$, we write $\lfloor x \rfloor$ for the floor of $x$,
 $\lceil x \rceil$ for the ceiling of $x$ and $x_+:=\max(x,0)$ 
 (resp. $x_- = \min(-x,0)$)
 for the positive (resp. negative) part of $x$.  
 For two reals $x$ and $y$, we let $x \vee y := \max(x,y)$ and $x \wedge y := \min(x,y)$.  
Moreover, for a differentiable real-valued function on $\bS$, we write $D f$ for the derivative of $f$. And,
we let $\Delta := D^2$.

As for constants that are used in the various inequalities, they are
usually written in the form $c_{a,b}$ or $C_{a,b}$, where the subscripts are quantities on which the current constant depends,
and are implicitly allowed to vary from line to line.

\section{Approximation Scheme and its Estimates}
\label{sec:2}

 
{
Our construction relies on a discretisation scheme in which we alternate one random move in the Hilbert space $L_{\rm sym}^2({\mathbb S})$ and rearrangement, forcing the output of the scheme to 
stay within the subset of symmetric non-{increasing} functions $U^2({\mathbb S})$. 
\vskip 5pt
\noindent \textit{Definition of the noise.}
The randomisation in $L^2_{\rm sym}({\mathbb S})$ obeys an Euler scheme with Gaussian increments. We introduce the following Wiener process, $( W_t )_{t \geq 0}$:
}
\begin{equation}
\label{def:tilde:noise}
	\begin{split}
		{W}_t :=B^0_t e_0 +\sum_{m\in \bN}m^{-\lambda} B^m_t e_m  \equiv \sum_{m\in \bN_0}\lambda_m B^m_t e_m, \quad t \geq 0, \\ 
	\end{split}
\end{equation}
where
$\lambda > \nicefrac{1}2$ and 
the sequence $(\lambda_m)_{m \in {\mathbb N}_0}$ 
is given by
$\lambda_0:=1$ and 
 $\lambda_m:=m^{-\lambda}$ for $m \in {\mathbb N}$. 
 Here, $\{(B_t^m)_{t \geq 0}\}_{m \in {\mathbb N}_0}$ are independent standard Brownian motions constructed on a 
 { filtered probability space $(\Omega,{\mathcal A},{\mathbb F}, {\mathbb P})$ (satisfying the usual conditions)}.  
  
{Our choice $\lambda > \nicefrac12$  precludes the  white noise} and forces the sequence 
 $(\lambda_m)_{m \in {\mathbb N}_0}$ to be square
summable.
In particular, the process $(  W_t )_{t \geq 0}$ can be equivalently defined 
as an $L^2_{\rm sym}(\bS)$-valued Brownian motion with covariance function  
\begin{equation}
\label{eq:Q:covariance}
Q : (f,g) \in \bigl( L^2_{\rm sym}(\bS) \bigr)^2 \mapsto s \wedge t \sum_{m \in {\mathbb N}_0} \lambda_m^2 \hat f^m \hat g^m = s \wedge t \, \langle f,g \rangle_{2,-\lambda}.
\end{equation}
\vskip 5pt

\noindent \textit{Definition of the scheme.}
The approximation scheme is constructed via composition of the stochastic convolution associated with $W$ and the rearrangement operator $*$ defined in Proposition \ref{prop:rearrangement:def}. 
Given a stepsize {$h \in (0,1)$}, we define $( X^h_n)_{n\in\bN_0}$ by
\begin{equation}
	\label{eq scheme n}
	\begin{split}
		X^{h}_{n+1}= & \left(  e^{h\Delta}X^h_{n}+\int_0^{h} e^{(h-s)\Delta}dW^{n+1}_s \right)^*,\ X^h_0: = 
		 X_0, \   	W^{n+1}_s :=  W_{s+nh}-W_{nh},
	\end{split}
\end{equation} 
where $X_0$ is a $U^2(\bS)$-valued random variable assumed to be independent of 
$( W_t )_{t \geq 0}$ (see 
\textbf{Assumption on $X_0$} for a clear formulation). {Note that measurability
of $X^h_{n+1}$, seen as a random variable with values in $L^2_{\rm sym}({\mathbb S})$ 
(equipped with its Borel $\sigma$-field)
is guaranteed by the continuity of the rearrangement operation (see Lemma \ref{lem:nonexpansion}).}

In Subsection
\ref{subse:3.1}, 
the dependence of $W^{n+1}$ on $h$ is suppressed in the notation, since $h$ is kept fixed. It is only in the forthcoming Subsection
\ref{sec tightness}
 that $h$ becomes variable as we let  {the latter}  tend to $0$.  
 
\vskip 5pt

\noindent 
{
\textit{Reminders about the stochastic convolution.} 
} For an ${\mathcal F}_0$-measurable initial condition $X_0$ with values  in $L^2_{\rm sym}(\bS)$, 
the stochastic convolution provides a weak solution to the {SHE (see Da Prato and Zabczyk \cite[Ch.5]{daPratoZabczyk2014stochEqnsInfDim} for a comprehensive introduction)}
\begin{equation}\notag 
	\begin{split}
		dX_t = & \Delta X_t dt + dW_t,\quad t \geq 0, 
	\end{split}
\end{equation}
written 
{on $(\Omega,{{\mathcal A}},{\mathbb F},{\mathbb P})$.} That is to say, that for all {$t\geq 0$} and $\varphi \in {\mathcal C}^2({\mathbb S})$, the process $\hat{X}:=( \hat{X}_t)_{t \geq 0}$ defined by
\begin{equation}\notag
	\begin{split}
	&\hat{X}_t:=e^{t\Delta}X_0+\int_0^te^{(t-s)\Delta}dW_s, \quad t \geq 0,
	\\
		{\text{satisfies} \quad {\mathbb P}\text{-a.s.,}}\quad 
	\langle &\hat{X}_t,\varphi \rangle =  \langle X_0 ,\varphi \rangle + \int_0^t \langle \hat{X}_s,\Delta \varphi \rangle ds + \langle W_t, \varphi \rangle,\quad t \geq 0. 
	\end{split}
\end{equation} 
By \cite[Theorem 2, p.146]{kotelenez1982submartIneqSPDE}, the process $\hat{X}$ has  a version with continuous sample paths. From \cite[Corollary 1, p.345]{zangeneh1990measurabilityofSolutionSemiLinSPDE}, this version is adapted.  
 Additionally, from   \cite[Theorem 6, p.4]{salavatiZangeneh2016pthPowerMaxIneqStochConv}, the following pathwise estimate holds for $p\geq 2$,
\begin{align}
\label{eq pth norm ineq}
		&\lVert \hat{X}_t \rVert_2^p \leq   \lVert \hat{X}_0 \rVert_2^p + p\int_0^t\lVert \hat{X}_s \rVert_2^{p-2} \langle \hat{X}_s,d W_s \rangle  + \frac{p(p-1)}{2}\int_0^t \lVert \hat{X}_s \rVert_2^{p-2} d [W]_s ,
\\
\label{eq:bracket:tildew}
 {\text{with} }\ 
&[ W]_t
= \sum_{m \in {\mathbb N}_0}
\lambda_m^2 t, \quad t \geq 0.
\end{align} 
{Due to}
\cite[Theorem 2, p.147]{Zangeneh_Stochastics}, see also
 \cite[Theorem 5, p.4]{salavatiZangeneh2016pthPowerMaxIneqStochConv}, 
 for $p\geq2$, $T>0$, 
\begin{equation}\label{eq max ineq}
	\begin{split}
		\mathbb{E}\left[ \sup_{0\leq t\leq T}\left\lVert \int_0^t e^{(t-s)\Delta}dW_s \right\rVert_2^p\right] \leq  c_p \mathbb{E}\left[ [W]_T^{p/2} \right].
	\end{split}
\end{equation}
{
Subsection 
\ref{subse:3.1}
is dedicated to {proving estimates} on the scheme that are uniform in the stepsize 
$h$. 
Tightness is
addressed in
Subsection \ref{sec tightness}.
}
\vskip 5pt

\noindent \textit{Distributional derivative of the noise.} 
{For any $t \geq 0$}, we let 
\begin{equation}
\label{eq:tildew:distributional:derivative}
  w_t := D   W_t = - 2 \pi 
\sum_{m\in \bN} m^{1-\lambda} B^m_t e_m^{\Im},
\end{equation} 
{which is a Brownian motion with values in} 
$H^{-1}_{\textsf{\rm anti-sym}}(\bS)$, 
the latter being defined as the dual of the space 
$H^{1}_{\textsf{\rm anti-sym}}(\bS)$
of anti-symmetric periodic functions with a square-integrable generalised gradient.  
\vskip 5pt

\noindent 
\textbf{Assumption on $X_0$.} 
Throughout the rest of the paper, we assume that 
$X_0$ 
is an ${\mathcal F}_0$-measurable random variable with values  in $U^2(\bS)$, satisfying
\begin{equation}
\label{eq:assumption:X0}
\forall p \geq 1, \quad {\mathbb E} 
\Bigl[ \bigl\| X_0 \bigr\|_2^{2p} \Bigr] < \infty.
\end{equation}

\subsection{$L^p$ estimates of the solution}
\label{subse:3.1}
We start with some preliminary estimates for the $L^p$ norm of the
process 
$(X_n^h )_{n \in {\mathbb N}_0}$.  
\begin{lemma}
	\label{lem unif est noise}
For $p>0$ (and for $h$ being the stepsize of the scheme and $\lambda$ the exponent colouring the noise),
	\begin{align}
	\label{eq:lem unif est noise:1}
			\E \biggl[ \biggl\Vert \int_0^{h} e^{(h-s)\Delta}dW_s \biggr\rVert_2^{2p} \biggr] \leq  & c_{p,\lambda} h^p; 
		\  
			{\text{when }}p=1,\ 	c_{1,\lambda} = 
			\sum_{m\in\bN_0}\lambda^2_m = \frac{d}{dt} 
			[   W]_t.
		\end{align}	
\end{lemma}

\begin{proof} 
The proof is standard and follows from combining {Theorem} 4.36 in \cite{daPratoZabczyk2014stochEqnsInfDim}, p114 (refer to p.96 therein for related notation), with Fourier analysis and
{\eqref{def:tilde:noise}}.		
		\qed
\end{proof}

As a consequence, we have:

\begin{lemma}
	\label{lem unif est0}
For $T>0$ and $p\geq 2$ (and for $h$ being the stepsize of the scheme and $\lambda$ the exponent colouring the noise),
	\begin{equation}\label{eq unif est0.1}
		\begin{split}
		\sup_{n \in {\mathbb N}_0 : nh  \leq T}
			\E \left[\bigl\lVert X^h_{n} \bigr\rVert_2^{p} \right] \leq 
			c_{p,\lambda,T} \left( 1+  \E \left[\left\lVert X_{0} \right\rVert_2^{p  }\right] \right). 
		\end{split}
	\end{equation}
\end{lemma}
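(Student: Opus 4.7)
The argument is a one-step-at-a-time moment estimate, combining rearrangement-preservation, the Itô formula for the stochastic convolution, conditional expectation and a discrete Gronwall iteration.

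The plan is to first observe that $X^h_{n+1}$ is the symmetric non-increasing rearrangement of
$Y^n_h := e^{h\Delta} X^h_n + \int_0^h e^{(h-s)\Delta}\, dW^{n+1}_s$.
Since rearrangement preserves $L^p$ norms (Lemma \ref{lem:isometry}), we have $\|X^h_{n+1}\|_2 = \|Y^n_h\|_2$, so the problem reduces to controlling the $p$-th moment of $Y^n_h$ in terms of $\|X^h_n\|_2$. Notice that $Y^n_t := e^{t\Delta} X^h_n + \int_0^t e^{(t-s)\Delta}\, dW^{n+1}_s$, for $t \in [0,h]$, is the mild solution of the stochastic heat equation on $[0,h]$ with initial condition $X^h_n$ and driven by the shifted noise $W^{n+1}$, which is independent of ${\mathcal F}_{nh}$.

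Next, I would apply the pathwise $p$-th power estimate \eqref{eq pth norm ineq} of Salavati--Zangeneh to $Y^n$ on $[0,t]$, $t \in [0,h]$, to obtain
\begin{equation*}
\|Y^n_t\|_2^p \leq \|X^h_n\|_2^p + p\int_0^t \|Y^n_s\|_2^{p-2} \langle Y^n_s, dW^{n+1}_s\rangle + \frac{p(p-1)}{2} c_{1,\lambda} \int_0^t \|Y^n_s\|_2^{p-2}\, ds ,
\end{equation*}
using $d[W^{n+1}]_s = c_{1,\lambda}\, ds$ from Lemma \ref{lem unif est noise}. Conditioning on ${\mathcal F}_{nh}$ kills the stochastic integral (by independence of the noise increments and standard integrability of $Y^n$, which follows by induction from the Gaussian nature of $Z^{n+1} := \int_0^h e^{(h-s)\Delta}\, dW^{n+1}_s$ and Lemma \ref{lem unif est noise}). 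Using Young's inequality $a^{p-2} \leq 1 + a^p$ for $a \geq 0$, $p \geq 2$, and setting $g^n(t) := \E[\|Y^n_t\|_2^p \mid {\mathcal F}_{nh}]$, I would obtain
\begin{equation*}
g^n(t) \leq \|X^h_n\|_2^p + C_{p,\lambda} h + C_{p,\lambda} \int_0^t g^n(s)\, ds, \qquad t \in [0,h],
\end{equation*}
for some constant $C_{p,\lambda}$. A Gronwall argument then yields $g^n(h) \leq (\|X^h_n\|_2^p + C_{p,\lambda} h) e^{C_{p,\lambda} h}$.

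Taking expectations and writing $a_n := \E[\|X^h_n\|_2^p]$, I obtain the recursion $a_{n+1} \leq a_n e^{C_{p,\lambda} h} + C_{p,\lambda} h\, e^{C_{p,\lambda} h}$. Iterating, for $n$ such that $nh \leq T$, a geometric-sum computation combined with the elementary bound $e^{C_{p,\lambda} h} - 1 \geq C_{p,\lambda} h$ yields
\begin{equation*}
a_n \leq a_0 e^{C_{p,\lambda} T} + e^{C_{p,\lambda} h}\bigl(e^{C_{p,\lambda} T} - 1\bigr),
\end{equation*}
which, for $h \leq 1$, gives \eqref{eq unif est0.1} with a constant $c_{p,\lambda,T}$ depending only on $p$, $\lambda$ and $T$.

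The main technical point is not the Gronwall/iteration step itself (which is routine) but rather the justification that the martingale term can be dropped after conditioning on ${\mathcal F}_{nh}$: this requires an a priori integrability control of $\|Y^n_s\|_2^p$ on each step. This is handled by a preliminary induction showing $X^h_n \in L^p(\Omega)$ for all $p$, using the Gaussian tails of $Z^{n+1}$ provided by Lemma \ref{lem unif est noise} together with the rearrangement $L^p$ isometry.
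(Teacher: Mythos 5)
Your proof is correct and follows essentially the same route as the paper: rearrangement preserves the $L^2$ norm, the pathwise estimate \eqref{eq pth norm ineq} is applied on each step, the martingale term is removed (with integrability justified by induction over $n$), the bound $a^{p-2}\leq 1+a^p$ is used, and a discrete Gronwall iteration concludes. The only cosmetic difference is that you close the one-step bound with a continuous Gronwall inside the step, whereas the paper bounds $\E[\lVert \hat X^{h,n}_s\rVert_2^{p-2}]$ directly via the contraction property of the heat semigroup together with Lemma \ref{lem unif est noise}; both yield the same recursion and the same constant structure.
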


\begin{proof}
The first step follows from the fact that the rearrangement preserves $L^p$ norms.  
\begin{equation}\label{eq unif est0}
	\begin{split}
			\E & \left[\bigl\lVert X^h_{n} \bigr\rVert_2^{p} \right]  = \E \left[\Big\lVert   e^{h\Delta}  X^h_{n-1} + \int_0^{h} e^{(h-s)\Delta}dW^n_s \Big\rVert_2^{p} \right].	\\
	\end{split}
\end{equation} 	
The \textcolor{black}{mild} solution to the stochastic heat equation started from $X_{n-1}^h$
and driven by $( W^n_r)_{0 \leq r \leq h}$ (see 
	\eqref{eq scheme n})
is denoted here by
\begin{equation}\label{eq unif est0.5}
	\begin{split}
		\hat{X}^{h,{n-1}}_s &:=  e^{s\Delta}X_{n-1}^h + \int_0^se^{(s-r)\Delta}dW^{n}_r, \quad s \in [0,h]. 
	\end{split}
\end{equation}
Then, by estimate \eqref{eq pth norm ineq},
\begin{align}\label{eq unif est0.2}
		&\E    \left[\lVert    X^h_{n} \rVert_2^{p} \right]  
		\\
		&\leq  \E \left[  \bigl\lVert X^h_{n-1} \bigr\rVert_2^p + p\int_0^{h}\lVert \hat{X}^{h,n}_s \rVert^{p-2}_2 \langle \hat{X}^{h,n}_s,dW^{n}_s\rangle  + \tfrac{p(p-1)}{2}  \int_0^{h}  \lVert \hat{X}^{h,n}_s \rVert^{p-2}_2d[W^n]_s  \,   \right]. \nonumber  
\end{align} 		
{One may remove the martingale terms} (by induction 
over the index 
$n$ in 
\eqref{eq unif est0}, the left-hand side therein is obviously finite, and then the 
left-hand side in 
\eqref{eq unif est0.5}
has a finite $p$-moment for any $p \geq 1$). It remains to control $\mathbb{E}[  \lVert \hat{X}^{h,n}_s \rVert^{q}_2]$ for $q\geq 0$.
\begin{align} \label{eq unif est0.2.1}
		 \lVert \hat{X}^{h,n}_s  \rVert^{q}_2 =    \Bigl\lVert e^{s\Delta}X_{n-1}^h + \int_0^se^{(s-r)\Delta}dW^{n}_r \Bigr\rVert^{q}_2 
		&\leq   c_q\left( \lVert X_{n-1}^h   \rVert^{q}_2 + \Bigl\lVert   \int_0^se^{(s-r)\Delta}dW^{n}_r \Bigr\rVert^{q}_2  \right),
	\end{align}
using the contraction property of the heat semigroup. 
In light of Lemma 
\ref{lem unif est noise},
\begin{equation}
\label{eq unif est0.2.2}
	\begin{split}
		\mathbb{E}\Bigl[   \lVert \hat{X}^{h,n}_s \rVert^{q}_2  \Bigr] & \leq c_q \left( \mathbb{E}\left[\lVert X_{n-1}^h   \rVert^{q}_2 \right]+ c_{q,\lambda} h^{q/2} \right) .
	\end{split}
\end{equation} 
Choosing $q=p-2$ and injecting the above bound in 
\eqref{eq unif est0.2}, we obtain 
\begin{equation}
\label{eq unif est0.2.3}
	\begin{split}	
		\E   \left[\lVert    X^h_{n} \rVert_2^{p} \right] 
		\leq &
		\E   \left[\lVert    X^h_{n-1} \rVert_2^{p} \right] 
	+ 	  c_{p,\lambda} h  \E \left[  \bigl\lVert X^h_{n-1} \bigr\rVert_2^{p-2} 
		+   h^{(p-2)/2}   \right].\\
		\end{split}
\end{equation}
{The assumption $h < 1$ together with the} bound $a^{p-2} \leq 1 + a^p$, for $a \geq 0$, gives  
\begin{equation*}
	\begin{split}	
		\E   \left[\lVert    X^h_{n} \rVert_2^{p} \right] 
		\leq &
	\Bigl( 1 + c_{p,\lambda} h \Bigr) 
		\E   \left[\lVert    X^h_{n-1} \rVert_2^{p} \right] 
	+ 	  c_{p,\lambda} h.
	\\
		\end{split}
\end{equation*} 
The conclusion follows from the discrete version of Gronwall's lemma. 
\qed
\end{proof} 

\subsection{Tightness}\label{sec tightness}

  
 { For
 $X_0$ taking values in $U^2({\mathbb S})$ and satisfying ${\mathbb E} 
[ \| X_0 \|_2^{2p} ] < \infty$ for any $p \geq 1$, we  address the tightness properties of the scheme, see Proposition 
\ref{prop:tightness} for the main statement.} 
{Whilst it would be possible to study tightness in $\cC([0,\infty),L^2_{\rm sym}({\mathbb S}))$, it is in fact much simpler to work  in 
$\cC([0,\infty),H^{-1}_{\rm sym}({\mathbb S}))$ (see for instance \cite{hambly-ledger} for another use of $H^{-1}$ in the analysis of McKean-Vlasov equation).}  
{To proceed,} we define 
the following linear interpolation 
 $( \tilde{X}^h_t )_{t \geq 0}$ 
of the scheme: 
 \begin{equation}
	\label{eq interpolation}
	\tilde{X}^{h}_{t} := (\lceil \nicefrac{t}{h} \rceil - \nicefrac{t}{h}  )X^h_{\lfloor \nicefrac{t}{h} \rfloor} + (\nicefrac{t}{h}-\lfloor \nicefrac{t}{h} \rfloor )X^h_{\lceil \nicefrac{t}{h} \rceil}.
\end{equation}
Lemma \ref{lem unif est0} {(applied with $T+1$ instead of $T$)} gives us the following bound: 
 
\begin{corollary}
\label{cor:2.5}
For an initial condition $X_0 \in U^2({\mathbb S})$ with finite moments of any order, for a real $T>0$ and for any real $p \geq 1$, we have
\begin{eqnarray}
		\sup_{t\leq T}\E \Bigl[ \bigl\lVert\tilde{X}^h_t \bigr\rVert^{2p}_{2} \Bigr] \leq C_{p,\lambda,T,\E [\lVert X_{0} \rVert_2^{2p }]}.
\end{eqnarray}
\end{corollary} 
Here is now the main result of this subsection: 
\begin{proposition}
\label{prop:tightness}
For any finite time horizon $T>1$, 
the linear interpolation schemes $\{ \tilde X^h\}_{{h \in (0,1)}}:=\{ (\tilde X^h_t)_{t \geq 0}\}_{h \in (0,1)}$ 
{have tight laws on} $\cC([0,T],{H^{-1}_{\rm sym}(\mathbb S)})$. 
Moreover, for any $p \geq 1$, there exists a constant $C_{p,\lambda,T,\E[\left\lVert X_0\right\rVert_2^{2p}]}$, independent of $h$, such that 
\begin{equation*}
{\mathbb E} \Bigl[ \sup_{n : nh \leq T+h}  \| X_n^h \|_2^{2p} \Bigr] \leq C_{p,\lambda,T,\E[\left\lVert X_0\right\rVert_2^{2p}]}.
\end{equation*}
\end{proposition}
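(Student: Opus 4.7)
Both assertions rest on a single sharp intermediate estimate: for $0 \leq m \leq n$ and $p \geq 1$,
\begin{equation*}
\E\bigl[ \| X^h_n - e^{(n-m)h\Delta} X^h_m \|_2^{2p} \bigr] \leq C_p \bigl((n-m)h\bigr)^p,
\end{equation*}
which I would establish by induction on $n-m$. The crucial observation is that, since $X^h_m \in U^2(\bS)$, Lemma \ref{lem:heatpreservessymmetry} gives $e^{jh\Delta} X^h_m \in U^2(\bS)$ for every $j \geq 0$, so that this comparison point equals its own rearrangement; the non-expansion Lemma \ref{lem:nonexpansion} then yields
\begin{equation*}
\| X^h_{k+1} - e^{(k+1-m)h\Delta} X^h_m \|_2 \leq \bigl\| e^{h\Delta}\bigl(X^h_k - e^{(k-m)h\Delta} X^h_m\bigr) + Z^h_{k+1} \bigr\|_2,
\end{equation*}
where $Z^h_{k+1} := \int_0^h e^{(h-s)\Delta} dW^{k+1}_s$. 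Squaring, exploiting the contractivity of $e^{h\Delta}$ and the martingale nature of the cross term (whose conditional bracket is bounded by $Ch\, \|X^h_k - e^{(k-m)h\Delta} X^h_m\|_2^2$), the case $p=1$ closes immediately via Lemma \ref{lem unif est noise}; a BDG bootstrap on the martingale part combined with a discrete Gronwall delivers the general $p$.

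\textbf{Uniform moment bound.} Specializing at $m = 0$ controls $\E[\|X^h_n\|_2^{2p}]$ pointwise in $n$, but to pull the supremum inside the expectation I would argue directly: by Lemma \ref{lem:isometry} and the contractivity of $e^{h\Delta}$,
\begin{equation*}
\|X^h_n\|_2^2 \leq \|X^h_{n-1}\|_2^2 + 2 \langle e^{h\Delta} X^h_{n-1}, Z^h_n \rangle_2 + \|Z^h_n\|_2^2.
\end{equation*}
Iterating, taking $\sup_{n \leq N}$, raising to the $p$-th power, and applying Doob's maximal inequality plus BDG to the resulting discrete martingale (whose bracket is controlled by $Ch \sum_k \|X^h_{k-1}\|_2^2$ and hence by Lemma \ref{lem unif est0}) together with Lemma \ref{lem unif est noise} for $\sum_k \|Z^h_k\|_2^2$ yields the claimed bound.

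\textbf{Tightness.} Combining the key estimate with the smoothing bound $\|(e^{(t-s)\Delta} - I) u\|_2 \leq C(t-s)^{1/2}\|Du\|_2$ and the $L^{2p}$ analogue of Lemma \ref{lem unif est derivative p1} (proved by the same Fourier computation as the $L^2$ case) yields the Hölder estimate
\begin{equation*}
\E\bigl[ \| \tilde X^h_t - \tilde X^h_s \|_2^{2p} \bigr] \leq C_p (t-s)^p \bigl(1 + s^{-p}\bigr), \quad 0 < s \leq t \leq T.
\end{equation*}
Specializing the key estimate at $m = 0$ and exploiting the Fourier-level monotonicity $\|(e^{t\Delta}-I) X_0\|_2 \leq \|(e^{t'\Delta}-I) X_0\|_2$ for $t \leq t'$, then splitting the argument according to whether $h \leq \eta$, $\eta < h \leq \sqrt{\eta}$ or $h > \sqrt{\eta}$, I would show
\begin{equation*}
\lim_{\eta \to 0} \sup_{h \in (0,1)} \E\Bigl[ \sup_{0 \leq s \leq \eta} \|\tilde X^h_s - X_0\|_2^{2p} \Bigr] = 0.
\end{equation*}
Tightness in $\cC([0,T], L^2_{\rm sym}(\bS))$ then follows in the spirit of Lemma \ref{lem:compactness}: compactness of the marginals for $t \in [\eta, T]$ via Lemma \ref{lem unif est derivative p1} and the compact embedding $H^1_{\rm sym}(\bS) \hookrightarrow L^2_{\rm sym}(\bS)$; Kolmogorov--Chentsov applied to the Hölder estimate on $[\eta, T]$ (with $p$ chosen large enough for a strictly positive Hölder exponent) gives stochastic equicontinuity there; and the near-origin estimate fills in $[0, \eta]$.

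\textbf{Main obstacle.} The essential difficulty is that the naive telescoping $\|X^h_n - X^h_m\|_2 \leq \sum_k \|X^h_{k+1} - X^h_k\|_2$ produces a spurious blow-up of order $h^{-1/2}$, since each increment is of size $\sim h^{1/2}$ and there are $\sim (t-s)/h$ of them. The non-expansion property (Lemma \ref{lem:nonexpansion}) together with the invariance of $U^2(\bS)$ under the heat semigroup (Lemma \ref{lem:heatpreservessymmetry}) is precisely what replaces this crude estimate by a martingale-type recursion and restores the correct $(n-m)h$ scaling.
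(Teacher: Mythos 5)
Your overall architecture (the key estimate $\E[\|X^h_n-e^{(n-m)h\Delta}X^h_m\|_2^{2p}]\leq C((n-m)h)^p$ via the non-expansion Lemma \ref{lem:nonexpansion} and the invariance of $U^2(\bS)$ under the heat semigroup, the splitting $\|X^h_n-X^h_m\|_2\leq \|X^h_n-e^{(n-m)h\Delta}X^h_m\|_2+\|(e^{(n-m)h\Delta}-I)X^h_m\|_2$, a H\"older estimate blowing up like $s^{-p}$ near the origin, Kolmogorov--Chentsov away from $0$, a separate small-time control, and compact marginals via the embedding $H^1_{\rm sym}(\bS)\hookrightarrow L^2_{\rm sym}(\bS)$) is essentially the strategy of the paper. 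But there is a genuine gap at the step you dispose of in half a sentence: the ``$L^{2p}$ analogue of Lemma \ref{lem unif est derivative p1} proved by the same Fourier computation as the $L^2$ case''. No such analogue is available by the same computation. The proof of Lemma \ref{lem unif est derivative p1} rests on two features that are specific to second moments: (i) Lemma \ref{lem keyineq}, whose proof converts $\int_0^h\|De^{s\Delta}u^*\|_2^2\,ds$ into a quadratic form in $u^*$ against the heat kernel (Cavalieri plus the Riesz rearrangement inequality), an identity that has no counterpart for $\|De^{s\Delta}u^*\|_2^{2p}$ — and since the inequality \eqref{eq keyineq} is between expectations over the auxiliary uniform time $U$, you cannot simply raise it to the power $p$ to control $\E[\|De^{hU\Delta}u^*\|_2^{2p}]$; and (ii) the cancellation in expectation of the cross term between $De^{h\Delta}X^h_{n-1}$ and the noise derivative, which holds at the level of second moments by independence but not for higher powers.

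Obtaining usable $2p$-moment bounds for $\|DX^h_m\|_2$ is in fact the technical core of the paper's proof: its third step derives, by a lengthy argument (pathwise iteration of \eqref{eq keyineq}, Jensen manipulations and conditional Burkholder--Davis--Gundy estimates for the two remainder families $R^1$ and $R^2$, an absorption argument valid only on time blocks of small length $\varepsilon_{p,\lambda,T}$, and an induction over blocks), only a \emph{conditional} bound of the form $\E[\|DX^h_m\|_2^{2p}\mid\mathcal F_{Nh}]\leq c(1+\|DX^h_N\|_2^{2p})$; the unconditional $L^2$ bound of Lemma \ref{lem unif est derivative p1} is then injected only at time $Nh\approx\varepsilon$, via Jensen with a low exponent, to produce the $\varepsilon^{-1}$ blow-up in \eqref{eq:IV:3}. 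Your plan needs either this two-tier argument or an honest unconditional $2p$-moment derivative bound, and neither is a routine extension of the $L^2$ computation. A secondary, more repairable imprecision: specialising the key estimate at $m=0$ gives only fixed-time bounds, so your claimed uniform-in-$h$ control of $\E[\sup_{0\leq s\leq\eta}\|\tilde X^h_s-X_0\|_2^{2p}]$ requires a maximal inequality; the paper obtains it by writing $\|X^h_n-e^{nh\Delta}X_0\|_2^2\leq T^1_n+T^2_n$ with $T^1$ non-decreasing and $T^2$ a martingale and applying Kolmogorov--Chentsov to these, and your Doob/BDG paragraph contains the right ingredients but should be wired to this quantity rather than to $\sup_n\|X^h_n\|_2^2$ alone; note also that the remainder $\|(e^{t\Delta}-I)X_0\|_2$ admits no rate (only a dominated modulus), since $X_0$ is merely in $L^2$.
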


\begin{proof}
{The proof is to verify 
Kolmogorov-Chentsov's criterion. Throughout the proof, we let $N_0:=\lceil T/h \rceil$.}
\vspace{5pt}
\\
\textit{First step.} 
Consider the quantity $\sup_{n \in \{0,\cdots,N_0\}}
\| X_{n}^{h} - X_0 \|_2$. By the triangle inequality,
$$\sup_{n \in \{0,\cdots,N_0\}}
\| X_{n}^{h} - X_0 \|_2 \leq \sup_{n \in \{0,\cdots,N_0\}}\left\{  \| X_{n}^{h} - e^{nh\Delta}X_0 \|_2 +\| e^{nh\Delta}X_0 - X_0 \|_2  \right\}.
$$
{To handle the first summand, one begins by use of Lemmas 
\ref{lem:nonexpansion} and \ref{lem:heatpreservessymmetry} and the contractive property of the heat semigroup:}
\begin{equation*}
	\begin{split}
		\| X_{n}^h - e^{nh \Delta} X_0 \|_2^2
		&\leq    \Bigl\| e^{h \Delta} X_{n-1 }^h - e^{nh \Delta} X_0 +  \int_{(n-1)h}^{nh} e^{ (nh-s) \Delta} dW_s \Bigr\|_2^2
		\\
		&\leq
		\Bigl\|  X_{n-1 }^h - e^{(n-1)h \Delta} X_0  \Bigr\|_2^2 + 
		\Bigl\| \int_{(n-1)h}^{nh} e^{ (nh-s) \Delta} dW_s \Bigr\|_2^2
		\\
		&\hspace{15pt} + 
		2 
		\Bigl\langle e^{h \Delta} X_{n-1 }^h - e^{nh \Delta} X_0, 
		\int_{(n-1)h}^{nh} e^{ (nh-s) \Delta} dW_s
		\Bigr\rangle.
	\end{split}
\end{equation*}
By iteration,  
\begin{align}
		\| X_{n }^h - e^{nh \Delta} X_0 \|_2^2
		&\leq \sum_{k=1}^{n} 
		\Bigl\| \int_{(k-1)h}^{kh} e^{ (kh-s) \Delta} d W_s \Bigr\|_2^2 \nonumber
		\\
		&\hspace{15pt}
		 + 
		2 \sum_{k=1}^n  
		\Bigl\langle e^{h \Delta} X_{k-1}^h - e^{kh \Delta} X_0, 
		\int_{(k-1)h}^{kh} e^{ (kh-s) \Delta} d W_s
		\Bigr\rangle \label{eq:IV:1} 
		\\
		&{=:  T^1_n + 2 T^2_n,} \nonumber
\end{align}
{with the convention $T^1_0=T^2_0=0$.}

One now studies the regularity of
the 
 two discrete processes $(T_n^1)_{n \geq 0}$ and $(T^2_n)_{n \geq 0}$ (indexing by $h$ is omitted). 
 
For $( T^1_n)_{n \geq 0}$, 
observe that, for $p \geq 1$
and  $0 \leq m < n$, {by Lemma \ref{lem unif est noise} and the generalised means inequality}, 
\begin{equation*}
	\begin{split}
		{\mathbb E}
		\Bigl[ \bigl\vert T^1_n - T^1_m \vert^{p} 
		\Bigr]  = 
		{\mathbb E}
		\biggl[ 
		\biggl(
		\sum_{k=m+1}^{n} 
		\Bigl\| \int_{(k-1)h}^{kh} e^{ (kh-s) \Delta} d W_s \Bigr\|_2^2
		\biggr)^p
		\biggr]
		\leq c_{p,\lambda} \bigl( h(n-m) \bigr)^p. 
	\end{split}
\end{equation*}
We now turn to the process $(T^2_n)_{n \geq 0}$. It is a martingale. By the Burkholder-Davis-Gundy inequality,
\begin{equation*}
	\begin{split}
		{\mathbb E} 
		\Bigl[
		\bigl\vert
		T_n^2 - 
		T_m^2
		\bigr\vert^{p}
		\Bigr]
		&\leq {\mathbb E} 
		\biggl[
		\biggl\vert
		\sum_{k=m+1}^n  
		\Bigl\langle e^{h \Delta} X_{k-1}^h - e^{kh \Delta} X_0, 
		\int_{(k-1)h}^{kh} e^{ (kh-s) \Delta} d W_s
		\Bigr\rangle
		\biggr\vert^{p}
		\biggr]
		\\
		&\leq 
		{\mathbb E} 
		\biggl[ 
		\biggl( 
		\biggl[
		\sum_{k=m+1}^\cdot  
		\Bigl\langle e^{h \Delta} X_{k-1}^h - e^{kh \Delta} X_0, 
		\int_{(k-1)h}^{kh} e^{ (kh-s) \Delta} d W_s
		\Bigr\rangle
		\biggr]_n
		\biggr)^{p/2} 
		\biggr],
	\end{split}
\end{equation*}
where the notation $[\, \cdot \, ]_n$ denotes the quadratic variation up the $n^{th}$ instant (note that here, this is from the $(m+1)^{st}$ instant). This may be estimated by writing
\begin{align}	
	\label{eq bracket calc}
		&\biggl[
		\sum_{k=m+1}^\cdot  
		\Bigl\langle e^{h \Delta} X_{ k-1 }^h - e^{kh \Delta} X_0, 
		\int_{(k-1)h}^{kh} e^{ (kh-s) \Delta} d W_s
		\Bigr\rangle
		\biggr]_n 
		\nonumber
		\\
		&= \sum_{k=m+1}^n 
		\biggl( 
		\Bigl[ \reallywidehat{ e^{h \Delta} X_{k-1 }^h - e^{kh \Delta} X_0}^{0} 
		\Bigr]^2 h   +
		\sum_{\ell \in {\mathbb N}} 
		\Bigl[ \reallywidehat{ e^{h \Delta} X_{k-1}^h - e^{kh \Delta} X_0}^{\ell} 
		\Bigr]^2 \hspace{-2pt}
		\int_{(k-1)h}^{kh} 
		\hspace{-7pt} 
		 \frac{e^{-  {8} \pi^2 (kh-s) \ell^2}} {\ell^{2\lambda}}{}ds
		\biggr)
		\nonumber
		\\
		&\leq h  \sum_{k=m+1}^n 
		\sum_{\ell \in {\mathbb N}_0} 
		\Bigl[ \reallywidehat{ e^{h \Delta} X_{k-1}^h - e^{kh \Delta} X_0 }^{\ell} 
		\Bigr]^2= h \sum_{k=m+1}^n 
		\Bigl\| e^{h \Delta} X_{k-1}^h - e^{kh \Delta} X_0  
		\Bigr\|_2^2.  
\end{align}
Applying the generalised means inequality and using, from Corollary 
\ref{cor:2.5}, that $\sup_{k=0,\cdots,N_0}{\mathbb E}[ \| X^h_{k h} \|_2^{p}] \leq C_{p,\lambda,T,\E[\left\lVert X_0\right\rVert_2^{p}]}$, 
one obtains
\begin{equation*}
	\begin{split}
	{\mathbb E} 
		\Bigl[
		\bigl\vert
		T_n^2 - 
		T_m^2
		\bigr\vert^{p}
		\Bigr]
&		\leq  
		c_{p,\lambda,T,\E[\left\lVert X_0\right\rVert_2^{p}]} \bigl( h(n-m) \bigr)^{\frac{p}{2}}.  
	\end{split}
\end{equation*} 
Returning to \eqref{eq:IV:1}, via application of the Kolmogorov-{Chentsov} continuity theorem {[see Theorem 1.2.1 in \cite{revuzYor1999ctsMartAndBM}]} (to the linear interpolation of the two processes, {$T^1$ and $T^2$ in \eqref{eq:IV:1}),} {we deduce} that, for $\alpha \in (0,(\frac{p}{2}-1)/2p)$,
\begin{equation*}
	\| X_{n}^h - e^{nh \Delta} X_0 \|_2
	\leq \Xi^{{h}} (nh)^{\alpha},
	\qquad {n \in \{0,\cdots,N_0\},}
\end{equation*}
almost surely for a (non-negative) random variable $\Xi^{{h}}$ with 
a finite $L^{2p}({\mathbb P})$-moment
that satisfies 
${\mathbb E}[(\Xi^{{h}})^{2p}] \leq c_{p,\lambda,T,\E[\left\lVert X_0\right\rVert_2^{2p}]}$. 
 Consequently, \begin{equation*}
	\| X_{n}^h -  X_0 \|_2
	\leq \Xi^{{h}} (nh)^{\alpha} + w(nh), \qquad  {n \in \{0,\cdots,N_0\},}
\end{equation*}
where $w(x)$ is a random variable that depends on $X_0$, that tends almost surely to $0$ with $x$ and
that is dominated by $2 \| X_0\|_2$. 
Notice that this implies in particular that 
\begin{equation}
\label{eq:lp:moments:sup}
{\mathbb E} \Bigl[ \sup_{n : nh \leq T+h}  \| X_n^h \|_2^{2p} \Bigr] \leq c_{p,\lambda,T,\E[\left\lVert X_0\right\rVert_2^{2p}]}.
\end{equation} 
\vspace{5pt}

\noindent \textit{Second step.} 
From here, we take $p \geq 2$. 
Differently from the first step, we now 
estimate the increments of the scheme in the space $H^{-1}_{\rm sym}({\mathbb S})$. 
To begin, apply the triangle and generalised means inequalities:
\begin{equation}\label{eq gen mean tight}
	\begin{split}
		&\E  \left[ \left\lVert X^h_{n}-X^h_{m}\right\rVert_{2,-1}^{2p}  \right] 	
		\\
		& \leq  2^{2p-1} \E \left[ \left\lVert X^h_{n }-e^{(n-m)h\Delta}X^{h}_{m} \right\rVert_{2,-1}^{2p} + \left\lVert X^{h}_{m} - e^{(n-m)h\Delta}X^h_{m} \right\rVert_{2,-1}^{2p} \right]. \\	 
	\end{split}
\end{equation}
The second summand in the above right hand side is simpler to handle.
{By expanding in Fourier modes the left hand side below, one gets, for any $u \in L^2(\bS)$,  
\begin{equation}
\label{eq gen mean tight.2}
\begin{split}
	\lVert e^{(t-s)\Delta}u - u \rVert_{2,-1}^2 
	 %
		\leq c (t-s)  \lVert u \rVert_2^2,
	\end{split}
\end{equation}
for a universal constant $c>0$. This is enough for the desired estimation. For the first summand in \eqref{eq gen mean tight}, one proceeds via the following sequence of inequalities. Starting with
Lemmas
\ref{lem:nonexpansion} and \ref{lem:heatpreservessymmetry},}
\begin{equation}
\label{eq:Fourier:increment}
	\begin{split}
		 \left\lVert    X^h_{n}-e^{(n-m)h\Delta}X^{h}_{m} \right\rVert_{2,-1}^{2p}   
		&\leq 
		 \left\lVert    X^h_{n}-e^{(n-m)h\Delta}X^{h}_{m} \right\rVert_{2}^{2p}   
		\\
		 &\leq    \left\lVert e^{h\Delta}X^h_{n-1}+\int_0^he^{(h-s)\Delta}d W^{n}_s -e^{(n-m)h\Delta}X^{h}_{m} \right\rVert_{2}^{2p},  \\  
	\end{split} 
\end{equation}			
which may be estimated by means of 
\eqref{eq pth norm ineq}, by considering the process
\begin{equation*}
\hat{X}_s^{h,n-1} := e^{s \Delta} \Bigl[ X_{n-1}^h - e^{(n-1-m)h \Delta} X_m^h \Bigr] + \int_0^s e^{(s-r) \Delta} d W_r^n, \quad s \in [0,h].
\end{equation*}
Following the same sequence of inequalities as in 
\eqref{eq unif est0.2}, \eqref{eq unif est0.2.1}, 
\eqref{eq unif est0.2.2}
and 
\eqref{eq unif est0.2.3}, we obtain\footnote{\color{black} Although the reader may find the computations reminiscent of 
	\eqref{eq:IV:1}, the objective is in fact different. In 
\eqref{eq:IV:1}, the goal is to apply Kolmogorov-Chentsov's theorem
to the process $(\| X_n^h - e^{nh \Delta} X_0 \|_2^2)_{0 \leq \lfloor n/h \rfloor \leq T}$. The purpose here is obviously not the same.}
\begin{equation*}
	\begin{split}	
		&\E   \left[\left\lVert
		X^h_{n}-e^{(n-m)h\Delta}X^{h}_{m} 
		 \right\rVert_2^{2p}    \right] 
		\leq 
		\E  
		 \left[\left\lVert
		X^h_{n-1}-e^{(n-1-m)h\Delta}X^{h}_{m} 
		 \right\rVert_2^{2p}    \right] 
		 \\
	&\hspace{15pt} + 	  c_{p,\lambda} h \left(  \E 
			 \left[\left\lVert
		X^h_{n-1}-e^{(n-1-m)h\Delta}X^{h}_{m} 
		 \right\rVert_2^{2p-2}    \right] 
		+   h^{p-1}   \right),\\
		\end{split}
\end{equation*}
which gives, by iteration, 
\begin{equation*}
	\begin{split}	
		&\E   \left[\left\lVert
		X^h_{n}-e^{(n-m)h\Delta}X^{h}_{m} 
		 \right\rVert_2^{2p}   \right] 
		 \leq c_{p,\lambda} h
		\sum_{k=m+1}^{n-1}
		 \left(  \E 
			 \left[\left\lVert
		X^h_{k}-e^{(k-m)h\Delta}X^{h}_{k} 
		 \right\rVert_2^{2p-2}    \right] 
		+   h^{p-1}   \right).\\
		\end{split}
\end{equation*}
We proceed by induction on $p$, assuming for a while that $p$ is an integer (greater than or equal to 1). 
 {When $p=1$, the above inequality yields
$\E   [ \lVert
		X^h_{n}-e^{(n-m)h\Delta}X^{h}_{m} 
		 \rVert_2^{2}   ] 
		\leq c_{1,\lambda} h (n-m).$} By induction, we get, for any $p \in {\mathbb N}$, 
\begin{equation}
\label{eq gen mean tight.4}
	\begin{split}	
		\E   \left[\left\lVert
		X^h_{n}-e^{(n-m)h\Delta}X^{h}_{m} 
		 \right\rVert_2^{2p}    \right] 
		\leq c_{p,\lambda} \bigl( h (n-m) \bigr)^p.\\
		\end{split}
\end{equation} 
When $p \geq 1$, we apply \eqref{eq gen mean tight.4} to $\lceil p \rceil$ and then get the conclusion for $p$ by H\"older's inequality applied with exponent $\lceil p \rceil/p$.
By 
\eqref{eq gen mean tight}, 
\eqref{eq gen mean tight.2}, 
\eqref{eq:Fourier:increment}
and
\eqref{eq gen mean tight.4}, we obtain
\begin{equation*}
	\begin{split}
		&\E  \left[ \left\lVert X^h_{n}-X^h_{m}\right\rVert_{2,-1}^{2p}  \right] 	
		 \leq  
		c_{p,\lambda} \bigl( h (n-m) \bigr)^p  \Bigl( 1 + {\mathbb E} \Bigl[ \sup_{0 \leq k \leq n} 
		\| X^h_k\|_2^{2p} \Bigr] \Bigr). \\	 
	\end{split}
\end{equation*}
Inserting the conclusion of the first step (see \eqref{eq:lp:moments:sup}), 
we have established that
\begin{equation}
\label{eq:3rdstep:1}
\begin{split}
	{\mathbb E} \Bigl[ \bigl\| X_{n}^h - 
	X_{m}^h \bigr\|_{2,-1}^{2p}
	\Bigr]  
	&\leq
	c_{p,\lambda} (nh - mh)^{p}.
\end{split}
\end{equation}

\noindent \textit{Conclusion.}
The conclusion of the first step (see  \eqref{eq:lp:moments:sup} again) says that, for all $t \in [0,T]$,  
the family 
$\{ \tilde X_t^h\}_{0 < h < 1}$
is tight on $H^{-1}_{\rm sym}({\mathbb S})$ (as bounded subsets of 
$L^2_{\rm sym}({\mathbb S})$ are relatively compact in 
$H^{-1}_{\rm sym}({\mathbb S})$).
And the conclusion of the second step (see \eqref{eq:3rdstep:1})
says that, for any $T>0$ and any $\varepsilon \in (0,1)$, 
the trajectories $((\tilde X^h_t)_{0 \le t \le T})_{0 < h < 1}$ 
live, with probability greater than $1-\varepsilon$, in a set of equicontinuous trajectories from 
$[0,T]$ to $H^{-1}_{\rm sym}({\mathbb S})$. 
\qed
\end{proof}

\color{black}
\section{Limiting Dynamics: Characterisation and Well-posedness}


\label{sec:3}

This section addresses the weak limits of the schemes. As discussed in the introduction, it is expected that those weak limits, say denoted by $X$, should satisfy a reflected stochastic differential equation in infinite dimension,  understood in the sense,
\begin{equation}\label{eq reflSHE}
\begin{split}
dX_t = \Delta X_t dt + dW_t + d\eta_t, \quad t \geq 0,
\end{split} 
\end{equation}
{for $X_0$ satisfying the standing assumption \eqref{eq:assumption:X0}. Here, $(\eta_t)_{t \geq 0}$ should be understood as 
a forcing term that reflects $X$ into $U^2({\mathbb S})$. Equations of this type have been treated {in} \cite{rocknerZhuZhu2012reflectInfDimConv}, but in the absence of an integration by parts formula (analogous {to} \cite{Zambotti1,Zambotti2} for the models in \cite{Donati-Martin,NualartPardoux}) these results do not apply in our setting. Consequently, our approach follows the application of limit theorems to the tested/weak behaviour of the schemes. Below, we often refer to \eqref{eq reflSHE} - with accompanying conditions on the process $\eta$ - as the {\bf rearranged stochastic heat equation (or rearranged SHE)}. The reader impatient for the exact solution concept should skip momentarily to Definition \ref{def:existence}, with caution that the fourth condition contains an integral that is defined en route. 
\newline\indent
The purpose of this section is thus to identify conditions satisfied by any weak limit that are, ultimately, sufficient to prove that the weak limit (of the schemes) is unique. 
This goal is reached in a series of five subsections. In Subsection 
\ref{subse:3:1}, we prove that weak limits satisfy 
an equation of the form 
\eqref{eq reflSHE}. Subsection 
\ref{subse:3:2} concerns the construction of an integral with respect to the reflection process $(\eta_t)_{t \geq 0}$. Notably this integral is non-decreasing with respect to $U^2({\mathbb S})$-valued integrand processes. Moreover, in Subsection \ref{subse:3:3} we establish an orthogonality property between $X$ and $\eta$, key to proving uniqueness 
of the weak limit. 
The rigorous definition of a solution to 
\eqref{eq reflSHE}
together with the main statement of its existence and uniqueness 
are given in
Subsection
 \ref{subse:3:4}, see Definition 
 \ref{def:existence}
 and Theorem \ref{thm:main:existence:uniqueness}. We end the section 
with a proof of the Lipschitz regularity of the flow induced by the solution in Subsection 
\ref{subse:3:5}. 
}

\subsection{Testing of the weak limits}
 
\label{subse:3:1}

Our analysis {of the weak limits relies on the 
statement below, in which we use the notion of `time-locally bounded trajectories with 
respect to $ \| \cdot \|_2$ (resp. $\| \cdot \|_{2,-2}$)'. For a normed vector space $(E,\| \cdot \|)$, a function $t \in [0,\infty) \mapsto f_t \in E$ is said to be `time-locally bounded with respect to 
$\| \cdot \|$' if, for any $T>0$, $\sup_{0 \le t \le T} \| f_t \| < \infty$.}

\begin{proposition}
\label{prop:weak:limit:1}
Let $(X_t,W_t)_{t \geq 0}$ be a weak limit (over 
$\cC([0,\infty),{H_{\rm sym}^{-1}({\mathbb S}) \times L_{\rm sym}^2({\mathbb S})})$ equipped with the topology of uniform convergence on compact subsets) 
of the processes $\{ (\tilde X^h_t,W_t)_{t \geq 0}\}_{h >0}$ as $h$ tends to $0$, this weak limit being 
constructed on the same filtered probability space $(\Omega,{\mathcal A},{\mathbb F},{\mathbb P})$ as the scheme itself
and the second component $(W_t)_{t \geq 0}$
of the weak limit {abusively denoted the same} as the noise in the scheme. 

Then,
$(X_t,W_t)_{t \geq 0}$
is ${\mathbb F}$-adapted, 
$(X_t)_{t \geq 0}$ is 
$U^2({\mathbb S})$-valued (i.e., 
each $X_t$ has symmetric and non-increasing values, 
see 
\eqref{def:1.2:U2})
{and has time-continuous trajectories with respect to $\| \cdot \|_{2,-1}$
and time-locally bounded trajectories with respect to $\| \cdot \|_2$,}
 and 
$(W_t)_{t \geq 0}$
is an $L^2_{\rm sym}({\mathbb S})$-valued $Q$-Brownian motion 
with respect to ${\mathbb F}$. 
Moreover, {there exists an ${\mathbb F}$-adapted process $(\eta_t)_{t \geq 0}$ 
with values in $H_{\rm sym}^{-2}({\mathbb S})$, 
with time-continuous trajectories with respect to $\| \cdot \|_{2,-3}$ and time-locally bounded trajectories with respect to $\| \cdot \|_{2,-2}$,}
such that, with probability 1, for any 
$u \in H^2_{\rm sym}({\mathbb S})$:
\begin{enumerate}
\Item \begin{equation}\label{eq RSHE}
	\begin{split}
		{\text{$\forall$ $t>s \geq 0$,}} \quad  
		\langle  X_t-{X}_s ,u \rangle    =     \int_s^t \langle   {X}_r  ,\Delta     u \rangle dr +\langle W_t-W_s ,u \rangle + \langle \eta_t-\eta_s,u\rangle,    
\end{split} 
\end{equation}
\item if 
$u$ is non-increasing in the sense of  
Definition \ref{def:1.2:U2}, then
the path $(\langle \eta_t ,u \rangle)_{t \geq 0}$ is non-decreasing (with $t$) 
and starts from $0$ at time $0$. 
\end{enumerate} 
\end{proposition}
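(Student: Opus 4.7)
The overall plan is to derive a discrete tested form of the scheme that exposes the rearrangement as a proto-reflection term, and then to pass to the limit in this equation. By Prokhorov combined with Skorokhod representation applied to the tightness of Proposition~\ref{prop:tightness}, I would work, along a subsequence, on a common probability space where $(\tilde X^h, W^h) \to (X, W)$ almost surely in $\mathcal{C}([0,T],L^2(\mathbb{S}))\times\mathcal{C}([0,T],L^2(\mathbb{S}))$ for every $T>0$, each $W^h$ having the law of the driving noise in the scheme. That $X_t \in U^2(\mathbb{S})$ almost surely follows from the closedness of $U^2$ in $L^2(\mathbb{S})$ (Proposition~\ref{prop:closedness}) together with the fact that each interpolated iterate $\tilde X^h_t$ is a convex combination of elements of $U^2$. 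Adaptedness of $X$ with respect to the augmented filtration generated by $(X,W)$, and the fact that $W$ remains a $Q$-Brownian motion in this filtration, follow from a standard martingale problem argument: the independence of increments of $W$ over $[s,t]$ from the past is preserved under a.s.\ uniform-on-compacts convergence.

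\textbf{Discrete tested equation and positivity.} For $u \in H^2_{\mathrm{sym}}(\mathbb{S})$ and step $k$ of the scheme, set $Y^h_k := e^{h\Delta}X^h_{k-1} + \int_0^h e^{(h-s)\Delta}dW^k_s$, so that $X^h_k = (Y^h_k)^*$, and define the one-step rearrangement displacement $R^h_k := X^h_k - Y^h_k \in L^2_{\mathrm{sym}}(\mathbb{S})$. Using $(e^{h\Delta}-I)u = \int_0^h \Delta e^{s\Delta}u\,ds$ (valid for $u\in H^2$) together with stochastic Fubini applied to the stochastic convolution, and summing over $k=1,\ldots,N:=\lfloor t/h\rfloor$, one obtains
\begin{equation*}
\langle X^h_N - X_0, u\rangle
= \int_0^{Nh} \langle X^h_{\lfloor s/h\rfloor},\Delta e^{\{s\}_h\Delta}u\rangle\,ds
+ \int_0^{Nh}\langle e^{(\lceil s/h\rceil h - s)\Delta}u,\,dW_s\rangle
+ \langle \eta^h_t, u\rangle,
\end{equation*}
with $\eta^h_t := \sum_{k=1}^N R^h_k$ and $\{s\}_h := s - \lfloor s/h\rfloor h$. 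The Hardy--Littlewood inequality (Lemma~\ref{lem:hardy}) then supplies the crucial positivity: if $u = u^* \in U^2\cap H^2_{\mathrm{sym}}$, then $\langle R^h_k, u\rangle = \langle (Y^h_k)^*, u\rangle - \langle Y^h_k, u\rangle \geq 0$, so the map $N \mapsto \langle \eta^h_{Nh}, u\rangle$ is nondecreasing, starting from $0$.

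\textbf{Passage to the limit.} I would then send $h \to 0$ term by term. The left-hand side converges to $\langle X_t - X_0, u\rangle$ by uniform convergence and continuity of $X$. The drift integral converges to $\int_0^t \langle X_s, \Delta u\rangle\,ds$ by dominated convergence, using the a.s.\ uniform convergence of $X^h_{\lfloor\cdot/h\rfloor}$ to $X$, the strong continuity of $s\mapsto e^{s\Delta}u$ in $H^2$, and the moment bounds of Corollary~\ref{cor:2.5}. The stochastic integral converges in $L^2(\mathbb{P})$ to $\langle W_t, u\rangle$ by It\^o isometry, since $e^{(\lceil s/h\rceil h - s)\Delta}u \to u$ in $L^2$. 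Hence $\langle \eta^h_t, u\rangle$ converges, and one defines
\begin{equation*}
\langle \eta_t, u\rangle := \langle X_t - X_0, u\rangle - \int_0^t \langle X_s, \Delta u\rangle\,ds - \langle W_t, u\rangle.
\end{equation*}
Linearity in $u$ and the bound $|\langle \eta_t, u\rangle| \leq C(t,\omega)\,\|u\|_{H^2_{\mathrm{sym}}}$, read directly off the right-hand side, give $\eta_t \in H^{-2}_{\mathrm{sym}}(\mathbb{S})$; continuity in $t$ and adaptedness are inherited from $X$ and $W$. A separability argument over a countable dense subset of $H^2_{\mathrm{sym}}$ upgrades the $u$-by-$u$ a.s.\ statements to a single almost sure event on which \eqref{eq RSHE} and, for $u = u^*$, the monotonicity $t \mapsto \langle \eta_t, u\rangle$ both hold for every $u$; the initial condition $\eta_0 = 0$ is clear from the defining formula.

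\textbf{Main obstacle.} The principal technical points I expect to wrestle with are (i) reconciling the a.s.\ convergence delivered by Skorokhod with the $L^2(\mathbb{P})$-convergence of the stochastic integral, which forces a further subsequence extraction and careful tracking of the filtration with respect to which the limiting $W$ is Brownian; and (ii) organising the density argument so that the exceptional null set is independent of the test function $u$, so that $\eta$ is genuinely an $H^{-2}_{\mathrm{sym}}$-valued stochastic process and not just a family of real-valued processes indexed by test functions. Once these are in hand, the monotonicity transfer and the $H^{-2}_{\mathrm{sym}}$-valuedness are immediate from the Hardy--Littlewood positivity of each $R^h_k$ and the explicit formula for $\langle \eta_t, u\rangle$.
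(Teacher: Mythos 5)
Your proof is correct, and its engine is the same as the paper's: the Hardy--Littlewood inequality gives $\langle X^h_k-Y^h_k,u\rangle\ge 0$ for non-increasing $u\in H^2_{\rm sym}(\mathbb S)$, the limit process $\eta$ is defined through the limiting tested identity $\langle\eta_t,u\rangle=\langle X_t-X_0,u\rangle-\int_0^t\langle X_s,\Delta u\rangle\,ds-\langle W_t,u\rangle$, and a separability argument produces a single null set; the $U^2$-valuedness via Proposition \ref{prop:closedness} and the $H^{-2}_{\rm sym}$ bound read off the formula are likewise as in the paper. Where you genuinely differ is in the organisation of the limit passage for the noise. You keep the stochastic convolution inside the tested equation and invoke Skorokhod representation, so you must prove convergence of the Wiener integrals $\int_0^t\langle e^{(\lceil s/h\rceil h-s)\Delta}u,dW^h_s\rangle$ along $h$-dependent copies of the noise; this is salvageable precisely because the integrand is deterministic (It\^o's isometry shows the integral differs from $\langle W^h_t,u\rangle$ by a quantity that is $o(1)$ in $L^2(\mathbb P)$ uniformly over copies, and $\langle W^h_t,u\rangle\to\langle W_t,u\rangle$ a.s.), but it is exactly the bookkeeping your ``main obstacle'' flags. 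The paper sidesteps it by subtracting the solution $V^h$ of the linear SHE driven by the same $W$ and working with the shifted process $Y^h=X^h-V^h$: the resulting one-step inequality \eqref{eq scheme eta cand} contains no stochastic integral at all, so the passage $h\to 0$ only involves pathwise-continuous functionals and is handled by elementary estimates in probability (the terms $T^h_1,T^h_2,T^h_3$), without any Skorokhod step or change of noise; the reflection process is then $\langle\eta_t,v\rangle=\langle Y_t,v\rangle-\int_0^t\langle Y_r,\Delta v\rangle\,dr$, which coincides with your formula once the weak form of the equation for $V$ is added back. Your route buys directness (one identity, no auxiliary process); the paper's route buys a noise-free limit passage and keeps the construction phrased on a single fixed probability space carrying the given $Q$-Brownian motion, which is convenient for the Yamada--Watanabe step used later to pass from weak existence to Theorem \ref{thm:main:existence:uniqueness}.
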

The hypothesis that the weak limit can be constructed on the same space $(\Omega,{\mathcal A},{\mathbb P})$ 
as in Section 
\ref{sec:2} can be made without loss of generality. In short, this just requires the probability space to be `rich enough' (e.g., it is 
 an atomless Polish probability space, {on which we can construct arbitrarily distributed random variables with values in any other Polish space}), which as additional assumption, is not a hindrance for us.  
 The claim that 
 $(\Omega,{\mathcal A},{\mathbb P})$ 
 can be equipped with the filtration ${\mathbb F}$ requires {a little} more care: 
 ${\mathbb F}$ cannot be any given filtration, which is a common feature with weak limits of processes. We clarify the choice of ${\mathbb F}$
 in the proof below. We do this only for the convenience of using the same notation ${\mathbb F}$ for this specific choice, as we are convinced that there is no risk of confusion for the reader. 
Similarly, denoting the second component of the weak limit by
$(W_t)_{t \geq 0}$ is also rather abusive, but is justified by the fact that the second
component's law in any weak limit remains that of a $Q$-Brownian motion with values in 
$L^2_{\rm sym}({\mathbb S})$, see \eqref{def:tilde:noise}
and 
\eqref{eq:Q:covariance}. 
	\begin{remark}
	\label{rem:weak:limit:1}
	The proof of Proposition \ref{prop:weak:limit:1}
	shows that the shape of the process $(\eta_t)_{t \geq 0}$ in 
	\eqref{eq RSHE} can be further clarified. 
	Indeed, 
	denote by $(V_t)_{t \geq 0}$ the solution to the (usual) SHE with 
	$X_0$ as initial condition
	and with $(W_t)_{t \geq 0}$ as driving noise (recalling that we keep this notation 
	in the limit setting), i.e., the $L^2_{\rm sym}({\mathbb S})$-valued process,
	\begin{equation}
		\label{eq:V:SPDE}
		V_t :=   e^{t \Delta} X_0 +  \int_0^t e^{(t-s) \Delta} 
		d W_s, \quad t \geq 0,
	\end{equation}
	 and let $Y_t := X_t-V_t$. Then, for $t \geq 0$, one has, with probability 1, 
	for any $v \in H_{\rm sym}^2({\mathbb S})$,
	\begin{equation}
		\label{eq:rem:weak:limit:1}
		\begin{split}
			\forall t \geq 0, \quad \langle \eta_t,v \rangle   =\langle  {Y}_t  , v \rangle  - \int_0^t \langle    {Y}_r  ,\Delta v  \rangle dr.
		\end{split}
	\end{equation}
\end{remark}
The following formal argument (made rigorous below) gives intuition for item {2} of Proposition \ref{prop:weak:limit:1}. For $\delta$ small, and $v$ non-increasing.
		\begin{equation*}
			\begin{split}
				\quad & \langle \eta_{t+\delta} -  \eta_t,v \rangle  =   \langle  {Y}_{t+\delta} -{Y}_{t}    , v \rangle  - \int_t^{t+\delta} \langle    {Y}_r  ,\Delta v  \rangle dr 
				\\
				&\approx 	\langle  {Y}_{t+\delta}     -    e^{\delta \Delta} {Y}_t  , v  \rangle 			
				=
				\Bigl\langle  {X}_{t+\delta}    -    e^{\delta \Delta} {X}_t
				-\int_t^{t+\delta}e^{(t+\delta-s)\Delta}dW_s
				 , v
				   \Bigr\rangle 	
				   \\
				&  =   \Bigl\langle \Bigl(e^{\delta\Delta}X_t+\int_t^{t+\delta}e^{(t+\delta-s)\Delta}dW_s
				\Bigr)^*-  \Bigl(e^{\delta\Delta}X_t+\int_t^{t+\delta}e^{(t+\delta-s)\Delta}dW_s
				\Bigr) , v \Bigr\rangle \geq 0,
			\end{split}
		\end{equation*}
	the last line following from Lemma \ref{lem:hardy}. To implement this argument onto the scheme, we let 
	 (with the same notation as in \eqref{eq scheme n}
for $W^{n+1}$ and with 
$V$ as in 
\eqref{eq:V:SPDE}):
\begin{equation}
	\label{eq V n}
	\begin{split}
	V_n^h := V_{nh}, \quad \textrm{i.e.} \quad 
		V_{n+1}^h = &  e^{h\Delta}V_{n}^h+\int_0^{h} e^{(h-s)\Delta}dW^{n+1}_s, \quad V_0^h =  X_0.
	\end{split}
\end{equation}
 The so-called shifted scheme, $X^h-V^h=(X^h_n-V^h_n)_{n \geq 0}$ is denoted ${Y}^h=(Y^h_n)_{n \geq 0}$, so that  \eqref{eq scheme n} may be rewritten as
\begin{align}
X_{n+1}^h &= \left( V_{n+1}^h +e^{h\Delta}(X^h_n-V_n^h) \right)^* , \quad n \geq 0. \nonumber
\\
\label{eq scheme rewrite}
Y_{n+1}^h &= \left( V_{n+1}^h +e^{h\Delta} Y_n^h \right)^* - V_{n+1}^h, \quad n \geq 0.
\end{align}
Following  
	\eqref{eq interpolation}, we introduce the
	 interpolations $\tilde{Y}^{h}_{t}$ and $\tilde{V}^{h}_{t}$ of  $Y^h$ and $V^h$. Then, 
 \begin{equation}
	\label{eq interpolation:tilde:Y:2}
	\tilde{Y}^{h}_{t} = \tilde X_t^h - \tilde V_t^h,  
	 \quad t \geq 0.
\end{equation}
\begin{proof}[of Proposition \ref{prop:weak:limit:1}.]
Throughout the proof, we fix $T>0$. {It suffices to study the weak limits, as $h$ tends to $0$, of $\{\tilde{X}^h,W\}$ on $[0,T]$}. 
{For a given $h>0$, with probability 1 and for any $u\in 
{H_{\rm sym}^3({\mathbb S})}$,} 
\begin{equation}\notag
\begin{split}
	\bigl\langle  {Y}^h_{n+1}- {Y}^h_n ,u \bigr\rangle &=  \bigl\langle  {Y}^h_{n+1}-e^{h\Delta} {Y}^h_n ,u \bigr\rangle + \bigl\langle  (e^{h\Delta}-I) {Y}^h_n ,u \bigr\rangle
	\\
	&
	=  \bigl\langle {Y}^h_{n+1}-e^{h\Delta} {Y}^h_n ,u \bigr\rangle + 
	\int_0^h\bigl\langle   e^{s\Delta} {Y}^h_n,\Delta u \big\rangle \,ds,
\end{split} 
\end{equation}
where we used the identity $\partial_s e^{s \Delta} = \Delta e^{s \Delta}$. 

Rearranging, and working under the additional assumption that $u$ is non-increasing, we use the rewritten shifted scheme \eqref{eq scheme rewrite} 
{and Lemma \ref{lem:hardy}} to show:
\begin{align}
\label{eq scheme eta cand}
&\langle  {Y}^h_{n+1}- {Y}^h_n ,u \rangle - 
	\int_0^h   \big\langle   e^{s\Delta} {Y}^h_n\,,\Delta u \big\rangle \, ds 
	\\
	&=  \langle  {Y}^h_{n+1}-e^{h\Delta} {Y}^h_n ,u \rangle 
	=
\Bigl\langle  \left( V_{n+1}^h +e^{h\Delta} Y_n^h \right)^* - \left( V_{n+1}^h 
+e^{h\Delta} {Y}^h_n
\right),u \Bigr\rangle
	\geq 0, \nonumber  
\end{align} 
We rewrite the second term {on the first line}:
\begin{equation}
\label{eq:3:11:aaa}
	\begin{split}
	{  \int_0^h \langle    e^{s\Delta} {Y}^h_n ,\Delta u \rangle ds  
	= \int_0^h \langle    {Y}^h_n , ( e^{s\Delta}-I) \Delta u \rangle ds +  h \langle {Y}^h_n ,\Delta u \rangle}.
	\end{split}
\end{equation} 
{
Summing over $n$ and letting $N_r = \lfloor r / h \rfloor$, for $r>0$, 
we get for any $(s,t) \in [0,T]^2$,}
\begin{align}\label{eq scheme to cts} 
	& \left\lvert \sum_{n=N_s}^{N_t} 
	\int_0^h  \bigl\langle   e^{r\Delta} {Y}^h_n ,\Delta u \bigr\rangle \,dr - \int_s^t \left\langle  \tilde{Y}_r^h ,\Delta u \right\rangle dr \right\rvert  
	\nonumber
	\\
     &  \leq  	\left\lvert \sum_{n=N_s}^{N_t}  \int_0^h  \bigl\langle {Y}^h_n,  \left(  e^{r\Delta}-I \right) \Delta u \bigr\rangle \,dr   \right\rvert  +     \left\lvert \sum_{n=N_s}^{N_t} h \left\langle       Y^h_n  ,\Delta u \right\rangle - 
   \int_s^t \langle  \tilde{Y}_r^h,\Delta u \rangle dr \right\rvert \nonumber
   \\ 
    & \leq  c_T \sup_{0\leq r \leq h}
      \bigl\lVert ( e^{r\Delta} - I ) \Delta u \bigr\|_2 \sup_{n \in \{0,\cdots,N_t\}} \Vert {Y}^h_n  \|_2 +   \left\lvert \sum_{n=N_s}^{N_t} h \langle       Y^h_n  ,\Delta u \rangle - 
   \int_s^t \langle  \tilde{Y}^h_r,\Delta u \rangle dr \right\rvert
   \nonumber
   \\ 
   &=: T_1^h(t) + T_2^h(s,t).  
\end{align} 
{Since $\Delta u \in L^2({\mathbb S})$,
we know that }
{$\lim_{h\searrow0}\sup_{0\leq r \leq h}
       \|  ( e^{r\Delta} - I ) \Delta u  \|_2 =0.$}
      Together with
      Proposition 
      \ref{prop:tightness} (recalling that 
      $Y^h_n=(X^h_n-V^h_n)_{n \geq 0}$), we deduce that 
%
\begin{equation}
\label{eq:T1:sec:3}
\forall \varepsilon >0, 
\quad 
\lim_{h \searrow 0}
     {\mathbb P}
     \Bigl( \bigl\{ \sup_{0 \leq t \leq T} T_1^h(t) \geq \varepsilon \bigr\}
     \Bigr) = 0.
     \end{equation}
Similarly, by tightness of 
$\{ \tilde X^h     \}_{h \in (0,1)}$ 
on 
$\cC([0,\infty),{H^{-1}_{ {\rm sym}}({\mathbb S})})$, we deduce that 
$\{ \tilde Y^h     \}_{h \in (0,1]}$ 
is {also} tight on  
$\cC([0,\infty),{H^{-1}_{ {\rm sym}}({\mathbb S})})$ (by \eqref{eq interpolation:tilde:Y:2}), 
from which we easily get that
(since $\tilde Y^h$ is the linear interpolation of $Y^h$
{and because $\Delta u \in H^1_{\rm sym}({\mathbb S})$}) 
%
\begin{equation}
\label{eq:T2:sec:3}
\forall \varepsilon >0, 
\quad 
\lim_{h \searrow 0}
     {\mathbb P}
     \Bigl( \bigl\{ \sup_{0 \leq s < t \leq T} T_2^h(s,t) \geq \varepsilon \bigr\}
     \Bigr) = 0.
     \end{equation} 
      {Returning} to 
      \eqref{eq scheme to cts},
      the last two displays 
      \eqref{eq:T1:sec:3}
      and
      \eqref{eq:T2:sec:3}
     yield, {for all $\varepsilon >0,$}
     \begin{equation}
     \label{eq:T:sec:3:2}
\lim_{h \searrow 0}
     {\mathbb P}
     \biggl( \sup_{0 \leq s < t \leq T} 
     \left\lvert \sum_{n=N_s}^{N_t} \int_0^h   \bigl\langle e^{r\Delta} {Y}^h_n ,\Delta u \bigr\rangle
     \,dr - \int_s^t \left\langle  \tilde{Y}_r^h ,\Delta u \right\rangle dr \right\rvert
     \geq \varepsilon \biggr) = 0.
     \end{equation}      
     {It remains to insert \eqref{eq:T:sec:3:2} into 
     \eqref{eq scheme eta cand}, by summing the (non-negative) left-hand side of \eqref{eq scheme eta cand} from $N_s$ to $N_t$ and subtracting this from the 
     main term in 
      \eqref{eq:T:sec:3:2}.     This 
   supplies  us with a term $T_3^h(s,t)$ such that}
     \begin{align}
        \label{eq:T:sec:4}
\forall (s,t) \in [0,T]^2 : s< t, \quad \langle  \tilde{Y}^h_{t}- \tilde{Y}^h_{s},u \rangle - 
	\int_s^t \left\langle  \tilde{Y}_r^h ,\Delta u \right\rangle dr
	\geq T_3^h(s,t), 
\\
     \label{eq:T:sec:5}
{\text{and}}\quad \forall \varepsilon >0, 
\quad 
\lim_{h \searrow 0}
     {\mathbb P}
     \biggl( \sup_{0 \leq s < t \leq T} 
\bigl\vert     T_3^h(s,t)
\bigr\vert
     \geq \varepsilon \biggr) = 0.
     \end{align}  
Now we let $h$ tend to $0$. Following the statement, we slightly abuse notation and write 
$(X_t,W_t)_{0 \leq t \leq T}$ a weak limit of $\{ (\tilde X^h_t,W_t)_{0 \leq t \leq T} \}_{h \in (0,1]}$. 
We denote by ${\mathbb F}$, the usual augmentation of the filtration generated by 
$(X_t,W_t)_{0 \leq t \leq T}$. 
{Clearly}, 
$(W_t)_{0 \leq t \leq T}$
is an $L^2_{\rm sym}({\mathbb S})$-valued $Q$-Brownian motion with respect to 
${\mathbb F}$. 
Along the same subsequence, 
$\{ (\tilde X^h_t,W_t,\tilde V^h_t,\tilde Y^h_t)_{0 \leq t \leq T} \}_{h \in (0,1]}$
converges to 
$(X_t,W_t,V_t,Y_t)_{0 \leq t \leq T}$, where $(V_t)_{0 \leq t \leq T}$ solves the {SHE}  
\eqref{eq:V:SPDE}
(driven by the limit process $W$)
and $Y_t =X_t - V_t$, for $t \in [0,T]$ (in particular, $Y_0=0$). 
Obviously, 
$(X_t,W_t,V_t,Y_t)_{0 \leq t \leq T}$
is ${\mathbb F}$-adapted. 
{By 
Proposition 
\ref{prop:closedness:H-1}, 
$(X_t)_{0 \leq t \leq T}$ 
takes values in 
$U^2({\mathbb S})$ (since 
$(\tilde X^h_t)_{0 \leq t \leq T}$ 
does, for 
any $h>0$) in the sense that, with probability 1, for all 
$t \geq 0$, $X_t$ belongs to $U^2({\mathbb S})$.
By construction, it is continuous with respect to $\| \cdot \|_{2,-1}$.
Using Proposition \ref{prop:tightness}
together with the fact that 
the mapping $(x_t)_{0 \le t \le T} \mapsto \sup_{0 \le t \le T} \| x_t \|_2$ is lower semi-continuous with respect to 
the uniform convergence topology on 
$\cC([0,T],H^{-1}_{\rm sym}({\mathbb S}))$, we deduce that, with probability 1, 
$\sup_{0 \le t \le T} \| X_t \|_2 < \infty$.
In turn, 
$(Y_t)_{0 \leq t \leq T}$ is valued in $L^2_{\rm sym}({\mathbb S})$, with bounded 
trajectories (the bound being random), because
$(V_t)_{0 \leq t \leq T}$ has continuous trajectories with values in $L^2_{\rm sym}({\mathbb S})$.
Moreover, 
using 
   \eqref{eq:T:sec:4}
     and  \eqref{eq:T:sec:5}, we obtain, when $u \in H^3_{\rm sym}({\mathbb S})$ is non-increasing, 
\begin{equation}\label{eq eta cts:22}
	\begin{split}
\forall (s,t) \in [0,T]^2 : s<t, \quad		\langle  {Y}_t- {Y}_s , u \rangle  - \int_s^t \langle    {Y}_r  ,\Delta u  \rangle dr  \geq 0. \\
	\end{split} 
\end{equation}
The above is true, for any $u \in H^3_{\rm sym}({\mathbb S})$, with probability 1, for all 
$t >s \geq 0$. By a separability argument, it is true with probability 1, for all $u \in H^3_{\rm sym}({\mathbb S})$
and for all $t>s \geq 0$. And, then by a new density argument (using the fact that 
$(Y_t)_{t \geq 0}$ has time-locally bounded trajectories with respect to $\| \cdot \|_{2}$), 
it is true with probability 1, for all $u \in H^2_{\rm sym}({\mathbb S})$ and for all $t > s \geq 0$. This prompts us to let, for any $v \in H^2_{\rm sym}({\mathbb S})$, 
\begin{equation*}
\forall t \in [0,T], \quad 
\langle \eta_t,v \rangle :=\langle  {Y}_t  , v \rangle  - \int_0^t \langle    {Y}_r  ,\Delta v  \rangle dr.
\end{equation*}
Obviously, the processes $(\eta_t)_{0 \le t \le T}$ 
has bounded trajectories with respect 
to $\| \cdot \|_{2,-2}$ and continuous trajectories with respect to 
$\| \cdot \|_{2,-3}$. 
By 
\eqref{eq eta cts:22}, we deduce that, a.s., 
for any $u  \in H^2_{\rm sym}({\mathbb S}) \cap U^2({\mathbb S})$,  $(\langle \eta_t,u \rangle)_{0 \leq t \leq T}$ is non-decreasing. This proves item 2 in the statement. Moreover, 
by replacing $(Y_t)_{0 \leq t \leq T}$ by $(X_t-V_t)_{0 \leq t \leq T}$ in the definition of $(\eta_t)_{0 \leq t \leq T}$ and 
by recalling {from \eqref{eq:V:SPDE}} that, for any $v \in H^2({\mathbb S})$,  
\begin{equation*}
\langle  {V}_t  , v \rangle  - \int_0^t \langle    {V}_r  ,\Delta v  \rangle dr = 
\langle  e^{t \Delta } X_0 , v \rangle
+
\int_0^t e^{(t-r) \Delta} 
d W_r,
\end{equation*}
we easily verify item 1 in the statement, completing the proof}.  
\qed
\end{proof}
{
In fact, the local in time boundedness property of the trajectories 
of $(X_t)_{t \geq 0}$ (in $\| \cdot \|_2$)
and 
$(\eta_t)_{t \geq 0}$ (in $\| \cdot \|_{2,-2}$)
can be improved.
Using again Proposition \ref{prop:tightness}
and the notation defined in Remark 
\ref{rem:weak:limit:1} together with the fact that 
the mapping $(x_t)_{0 \le t \le T} \mapsto \sup_{0 \le t \le T} \| x_t \|_2$ is lower semi-continuous with respect to 
the uniform convergence topology on 
$\cC([0,T],H^{-1}_{\rm sym}({\mathbb S}))$, we have}  
{\begin{proposition}
\label{prop:weak:limit:2}
For any $p \geq 1$ and any $T>0$, 
there exists constant $C_{p,\lambda,T,{\mathbb E}[\Vert X_0 \Vert^{2p}]}$, such that for any weak limit $X$
as in Proposition 
\ref{prop:weak:limit:1} and $Y$ as in Remark \ref{rem:weak:limit:1},
\begin{equation*}
{\mathbb E} \Bigl[ \sup_{t \in [0,T]}  \| X_t \|_2^{2p} \Bigr] + {\mathbb E} \Bigl[ \sup_{t \in [0,T]}  \| Y_t \|_2^{2p} \Bigr] \leq C_{p,\lambda,T,{\mathbb E}[\Vert X_0 \Vert^{2p}]}.
\end{equation*} 
\end{proposition}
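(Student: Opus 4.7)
The plan is to derive both estimates by combining the moment bound already obtained for the discrete scheme in Proposition \ref{prop:tightness} with a semicontinuity argument along the weakly convergent subsequence constructed in Proposition \ref{prop:weak:limit:1}. I regard the first estimate (the one on $X$) as the substantive step, while the estimate on $Y$ is then immediate via the triangle inequality.

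First I would note that, on the path space $\cC([0,T],L^2_{\rm sym}(\mathbb{S}))$ endowed with the topology of uniform convergence, the functional
\begin{equation*}
\Phi : \phi \mapsto \sup_{t\in[0,T]} \| \phi(t)\|_2^{2p}
\end{equation*}
is continuous and non-negative. Let $(h_k)_{k\geq 1}$ be the subsequence along which $\tilde X^{h_k}$ converges in law to $X$ in that space. The Portmanteau theorem (in its Fatou form for continuous non-negative functionals) yields
\begin{equation*}
\mathbb{E}\Bigl[ \sup_{t\in[0,T]} \|X_t\|_2^{2p} \Bigr] \leq \liminf_{k \to \infty} \mathbb{E}\Bigl[ \sup_{t\in[0,T]} \|\tilde X^{h_k}_t\|_2^{2p} \Bigr].
\end{equation*}

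Next I would bound the right-hand side uniformly in $h$. By the definition of the linear interpolation \eqref{eq interpolation} and the convexity of $\|\cdot\|_2^{2p}$,
\begin{equation*}
\sup_{t\in[0,T]} \|\tilde X^{h}_t\|_2^{2p} \leq \sup_{n \in \mathbb{N}_0 : nh \leq T+h} \|X^{h}_n\|_2^{2p},
\end{equation*}
so Proposition \ref{prop:tightness} directly gives $\mathbb{E}[\sup_{t\in[0,T]} \|\tilde X^{h}_t\|_2^{2p}] \leq C_{p,\lambda,T,\mathbb{E}[\|X_0\|_2^{2p}]}$, uniformly in $h \in (0,1)$. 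Passing to the $\liminf$ concludes the bound on $X$.

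For the bound on $Y$, I would write $Y_t = X_t - V_t$ with $(V_t)_{0\leq t \leq T}$ the mild solution of \eqref{eq:V:SPDE}, so that $\|Y_t\|_2^{2p} \leq c_p (\|X_t\|_2^{2p} + \|V_t\|_2^{2p})$. The contractivity of the heat semigroup gives $\sup_{t\in[0,T]} \|e^{t\Delta} X_0\|_2 \leq \|X_0\|_2$, while the maximal inequality \eqref{eq max ineq} of Salavati and Zangeneh together with the explicit form of $[W]_T$ in \eqref{eq:bracket:tildew} yields $\mathbb{E}[\sup_{t\in[0,T]} \|\int_0^t e^{(t-s)\Delta} dW_s\|_2^{2p}] \leq c_{p,\lambda} T^p$. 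Combining, $\mathbb{E}[\sup_{t\in[0,T]} \|V_t\|_2^{2p}] \leq C_{p,\lambda,T,\mathbb{E}[\|X_0\|_2^{2p}]}$, and together with the first estimate this closes the argument for $Y$.

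The only mild subtlety I anticipate is making sure that the weak limit $V$ appearing in Remark \ref{rem:weak:limit:1} really is (a version of) the mild solution to \eqref{eq:V:SPDE} driven by the limiting $W$, so that the maximal inequality applies. This is already implicit in the proof of Proposition \ref{prop:weak:limit:1} since $(V_t)_{0\leq t\leq T}$ is defined there as the mild solution driven by the limit Brownian motion; no additional work is required beyond invoking that construction.
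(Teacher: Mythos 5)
Your proposal is correct and follows exactly the route the paper intends (the paper states the result as a direct consequence of Proposition \ref{prop:tightness} and Remark \ref{rem:weak:limit:1} without writing out the details): the uniform-in-$h$ bound on $\sup_{n:nh\leq T+h}\|X_n^h\|_2^{2p}$, convexity of the interpolation, a Fatou/Portmanteau semicontinuity argument along the weakly convergent subsequence, and for $Y=X-V$ the contractivity of the heat semigroup together with the maximal inequality \eqref{eq max ineq} for the stochastic convolution. Nothing is missing.
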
} 

\subsection{Integral with respect to the reflection process}
\label{subse:3:2} 

Our next objective is to construct an integral with respect 
to the reflection process $(\eta_t)_{t \geq 0}$ identified in the statement of Proposition 
\ref{prop:weak:limit:1}. {We make use of the resulting integral in order 
to establish uniqueness of the weak limits obtained in Proposition 
\ref{prop:weak:limit:1}.}

{In the construction, we use the fact that, with probability 1, 
  the path $(\eta_t)_{t \geq 0}$ satisfies the forthcoming two assumptions $({\bf E1})$ and 
  $({\bf E2})$, which are spelled out as follows for  a {\bf deterministic} trajectory 
$(n_t)_{t \geq 0}$:
\begin{itemize}
\item[({\bf E1})]
$t \mapsto n_t$ is a function 
from 
$[0,\infty)$ to 
$H^{-2}_{\rm sym}({\mathbb S})$, {locally bounded with respect to $\| \cdot \|_{2,-2}$ and continuous with respect to $\| \cdot \|_{2,-3}$};
\item[({\bf E2})] 
For any 
  $u \in H^2_{\rm sym}({\mathbb S}) \cap U^2({\mathbb S})$, the 
  function $t \in [0,\infty) \mapsto  \langle \eta_t , u \rangle$ is non-decreasing.
  \end{itemize}}
  {The integral we construct below 
holds for a path $(n_t)_{t \geq 0}$ satisfying only the two assumptions 
({\bf E1}) and ({\bf E2}). In particular, the integral with respect to 
$(\eta_t)_{t \geq 0}$ is then obtained pathwise, by choosing $(n_t)_{t \geq 0}$ as the current realisation of $(\eta_t)_{t \geq 0}$.} 
Now, 
let 
  $u \in H^2_{\rm sym}({\mathbb S}) \cap U^2({\mathbb S})$.
  {Even though 
  $(n_t)_{t \geq 0}$ is continuous with respect to the weaker norm $\| \cdot  \|_{2,-3}$, the fact that 
  it also takes values in $H^{-2}_{\rm sym}({\mathbb S})$ implies that 
  $t \in [0,\infty) \mapsto \langle \eta_t,u \rangle$ is continuous. Then, if} we consider in addition another continuous {\bf deterministic}
trajectory 
$(z_t)_{t \geq 0}$ valued in 
$L^2_{\rm sym}({\mathbb S})$, ({\bf E2}) allows us to define 
\begin{equation}
\label{eq:integral:RS:0}
\biggl( \int_0^t \langle z_r,u \rangle  d\langle n_r, u \rangle \biggr)_{t \geq 0}
\end{equation}
as a time-continuous Riemann-Stieltjes integral.  
From this, we want to give a meaning to 
the as yet informally written integrals
$( \int_0^t z_r \cdot dn_r)_{t \geq 0}$,
where the dot $\cdot$
in the notation is intended to denote a form of duality presence between the integrand and the integrator. Our definition of the integral \label{eq:integral:RS:1} is done by analogy with Parseval's identity, setting $u$ in 
\eqref{eq:integral:RS:0}
to be ({cosine}) elements in the Fourier basis. 
The next step is to expand $\int_0^t z_r \cdot dn_r$ along the ({cosine}) Fourier basis $(e_m)_{m \in {\mathbb N}_0}$, noticing that one may 
decompose each $e_m$ as the difference of two elements of $U^2({\mathbb S})$, $e_m^+$ and $e_m^-$,  with
\begin{equation}
\notag
\begin{split}
e_m^{\pm}(x):= & e_m(0) \iota +\int_0^x \left[ -\1_{(-\nicefrac{1}{2},0]}(y) \bigl(De_m(y)\bigr)_\mp + \1_{[0,\nicefrac{1}{2})}(y) \bigl(De_m(y) \bigr)_\pm \right](y) dy, \\ 
\end{split}
\end{equation} 
with $\iota=1$ if $\pm=+$ and $0$ if $\pm=-$. 
The functions $e_m^+$ and $e_m^-$ are in $U^2({\mathbb S})$ (courtesy of the symmetry properties of $e_m$) and $e_m=e_m^+-e_m^-$. Therefore, one may set:
\begin{equation}
\label{eq fouriersplit2}
\begin{split}
	\int_s^t \langle z_r,e_m \rangle  d\langle n_r,  e_m \rangle:= &  \int_s^t \langle z_r,e_m \rangle  d\langle n_r,  e_m^+ \rangle - \int_s^t \langle z_r,e_m \rangle  d\langle n_r,  e_m^- \rangle. \\ 
\end{split}
\end{equation}   
{For any $\varepsilon >0$, $(z_r)_{r \geq 0}$ can be replaced by 
$(e^{\varepsilon \Delta} z_r)_{r \geq 0}$ in 
\eqref{eq fouriersplit2}, which is a  consequence of 
Lemma \ref{lem:heatpreservessymmetry}. 
The following statement is key in the construction of our integral \eqref{eq:integral:RS:1}.}
\begin{lemma}
\label{lem:integral:RS:bound:0}
{For any $k \in {\mathbb N}$ and any $\varepsilon >0$}, there exists 
a constant $c_{k,\varepsilon}$ such that, for any  two
(deterministic) 
curves 
$(n_t)_{t \geq 0}$ 
and 
$(z_t)_{t \geq 0}$, 
with 
$(n_t)_{t \geq 0}$ satisfying 
{\rm ({\bf E1})} and 
{\rm ({\bf E2})}
and with $(z_t)_{t \geq 0}$ a continuous path in $L^2_{\rm sym}({\mathbb S})$, {and any $T \geq 0$ and $m \in {\mathbb N}_0,$}
\begin{equation}
\label{eq:integral:RS:bound:0}
\begin{split}
&\sup_{t \in [0,T]} 
\biggl\vert 
\int_0^t \langle e^{\varepsilon \Delta} z_r,e_m \rangle
d \langle n_r,e_m^{\pm} \rangle \biggr\vert 
\leq \frac{c_{k,\varepsilon}}{m^k \vee 1} 
\| n_T \|_{2,-2} \, 
\sup_{t \in [0,T]} 
\| z_t \|_{2}.
\end{split}
\end{equation}
When $(n_t)_{t \geq 0}$ is understood as a realisation of 
$(\eta_t)_{t \geq 0}$, the term 
$\| n_T \|_{2,-2}$ 
becomes $\| \eta_T \|_{2,-2}$ and, {with $(Y_t)_{t \geq 0}$ as in 
\eqref{eq:rem:weak:limit:1}}, it can be upper bounded by 
\begin{equation}
\label{eq:integral:RS:bound:0:++}
\| \eta_T \|_{2,-2} \leq c_T 
\sup_{t \in [0,T]}
\| Y_t \|_2.
\end{equation}
\end{lemma}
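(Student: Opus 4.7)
The plan rests on two observations: the action of $e^{\varepsilon\Delta}$ on the Fourier basis yields super-polynomial decay in $m$, and the symmetric non-increasing character of $e_m^\pm$ combined with property {\rm (E2)} makes $d\langle n_r, e_m^\pm\rangle$ a positive finite Radon measure on $[0,T]$. First, I would factor the integrand: since $e^{\varepsilon\Delta}e_m = e^{-4\pi^2 m^2 \varepsilon}e_m$,
\begin{equation*}
\bigl|\langle e^{\varepsilon\Delta}z_r, e_m\rangle\bigr| = e^{-4\pi^2 m^2 \varepsilon}\bigl|\langle z_r, e_m\rangle\bigr| \leq \sqrt{2}\,e^{-4\pi^2 m^2 \varepsilon}\|z_r\|_2,
\end{equation*}
and for each integer $k\geq 1$ the Gaussian factor obeys $e^{-4\pi^2 m^2 \varepsilon}\leq c_{k,\varepsilon}/(m^k\vee 1)$. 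This already encodes the $m$-decay required by \eqref{eq:integral:RS:bound:0}, with room to absorb any polynomial-in-$m$ factor that arises from the integrator.

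The central step is to verify that $e_m^\pm\in H^2_{\rm sym}(\mathbb{S})$ with $\|e_m^\pm\|_{2,2}=O(m^2)$. From the piecewise construction \eqref{eq fouriersplit}, $De_m^\pm$ is built from the positive/negative parts of $De_m$ on the two half-circles; at each transition point (a zero of $De_m$) both competing rules vanish, so $De_m^\pm$ is \emph{continuous} on $\mathbb{S}$ and hence $e_m^\pm\in C^1(\mathbb{S})$. The a.e.\ second derivative then agrees with the piecewise classical derivative and satisfies $|D^2 e_m^\pm|\leq \|D^2 e_m\|_\infty = O(m^2)$ pointwise; so $D^2 e_m^\pm\in L^\infty\subset L^2$ and Parseval yields $\|e_m^\pm\|_{2,2}\leq c\,m^2$. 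Since $e_m^\pm$ is symmetric non-increasing, {\rm (E2)} applied to $u=e_m^\pm$ guarantees that $r\mapsto \langle n_r, e_m^\pm\rangle$ is non-decreasing on $[0,T]$, so $d\langle n_r, e_m^\pm\rangle$ is a positive finite Radon measure whose total mass is $\langle n_T, e_m^\pm\rangle - \langle n_0, e_m^\pm\rangle$ (with $n_0=0$ in the instance of interest).

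Combining the two ingredients, a Riemann--Stieltjes bound together with Cauchy--Schwarz in the $H^{-2}$--$H^2$ duality yields
\begin{equation*}
\sup_{t\in[0,T]}\Bigl|\int_0^t \langle e^{\varepsilon\Delta}z_r, e_m\rangle\,d\langle n_r, e_m^\pm\rangle\Bigr| \leq \sqrt{2}\,e^{-4\pi^2 m^2 \varepsilon}\sup_{r\in[0,T]}\|z_r\|_2 \cdot \|n_T\|_{2,-2}\,\|e_m^\pm\|_{2,2},
\end{equation*}
and inserting $\|e_m^\pm\|_{2,2}\leq c\,m^2$ and absorbing the polynomial factor into the Gaussian (applying the $k$-decay estimate with exponent $k+2$) delivers \eqref{eq:integral:RS:bound:0}. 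For the secondary bound \eqref{eq:integral:RS:bound:0:++} I would invoke Remark \ref{rem:weak:limit:1} to identify $\eta_t = Y_t - \int_0^t \Delta Y_r\,dr$ as an element of $H^{-2}_{\rm sym}$; since the embedding $L^2\hookrightarrow H^{-2}_{\rm sym}$ is bounded and $\|\Delta u\|_{2,-2}^2 = \sum_m (m\vee 1)^{-4}(2\pi m)^4 \hat u_m^2\leq c\|u\|_2^2$, we get $\|\eta_T\|_{2,-2}\leq \|Y_T\|_{2,-2}+T\sup_r\|\Delta Y_r\|_{2,-2}\leq c_T\sup_r\|Y_r\|_2$.

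The main technical obstacle is the regularity claim $e_m^\pm\in H^2_{\rm sym}$ with polynomial-in-$m$ norm; it hinges entirely on the cancellation that makes $De_m^\pm$ continuous across the zeros of $De_m$. Without this continuity the second derivative would carry Dirac masses and the $H^{-2}$--$H^2$ Cauchy--Schwarz argument would collapse, so the piecewise construction producing a $C^1$ function is the crucial structural fact.
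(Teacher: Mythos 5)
Your proof is correct and follows essentially the same route as the paper: a monotone Riemann--Stieltjes bound via ({\bf E2}), the $H^{-2}$--$H^{2}$ duality with the estimate $\|e_m^{\pm}\|_{2,2}\leq c\,(m^2\vee 1)$, the rapid decay of $\langle e^{\varepsilon\Delta}z_r,e_m\rangle$ in $m$ (you use the exact eigenvalue $e^{-4\pi^2m^2\varepsilon}$ where the paper uses the $D^{2k}$ integration-by-parts bound \eqref{eq:integral:RS:bound:1}, which is equivalent), and \eqref{eq:rem:weak:limit:1} for the bound on $\|\eta_T\|_{2,-2}$. The only difference is that you spell out the ``simple exercise'' that $e_m^{\pm}\in H^2_{\rm sym}(\mathbb{S})$ (continuity of $De_m^{\pm}$ across the zeros of $De_m$, hence no Dirac masses in $D^2e_m^{\pm}$), which the paper leaves to the reader.
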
 
In 
\eqref{eq:integral:RS:bound:0}, we use the notation $e_m^{\pm}$  to indicate that the result holds true with 
both $e_m^+$ and $e_m^-$. Also, note that the $L^2$ contributions of $e_m^+$ and $e_m^-$ diverge with $m$. 
This is precisely the reason why we consider integrands of the form $(e^{\varepsilon\Delta} z_t)_{t \geq 0}$, since  the heat kernel forces the higher modes of the convolution to decay exponentially fast.  
In brief, for any $k$ and $\varepsilon$ as in the statement, we can find two constants $c_k$ and $c_{k,\varepsilon}$ such that,  
\begin{equation}
\label{eq:integral:RS:bound:1}
\begin{split}
\forall m \in {\mathbb N}, \ \forall r \geq 0, \quad 
\lvert \langle e^{\varepsilon\Delta} z_r,e_m\rangle \rvert  \leq  c_k\bigl\lvert \bigl\langle D^{2k}e^{\varepsilon\Delta}z_r,\frac{1}{m^{2k}} e_m\bigr\rangle \bigr\rvert \leq c_{k,\varepsilon} \frac{1}{m^{2k}} \lVert z_r\rVert_2 .
\end{split}
\end{equation}
Obviously, the proof of 
Lemma 
\ref{lem:integral:RS:bound:0}
relies on the 
 bound 
\eqref{eq:integral:RS:bound:1}, whence appears the constant $c_{k,\varepsilon}$ in the statement.  

\begin{proof}[Proof of Lemma \ref{lem:integral:RS:bound:0}.]
{
We begin with the following simple observation. It is easy to verify that each $e_m^{\pm}$ belongs to $H^2_{\rm sym}({\mathbb S})$ with $\| e_m^{\pm} \|_{2,2} \leq c (m^2 \vee 1)$.}
 
{By ({\bf E2}),  one can use standard properties of the Riemann-Stieltjes integral:}
\begin{equation}
\begin{split}
\sup_{t \in [0,T]} 
\biggl\vert 
\int_0^t \langle e^{\varepsilon \Delta} z_r,e_m \rangle
d \langle n_r,e_m^{\pm} \rangle \biggr\vert 
&\leq \sup_{t \in [0,T]} \bigl\vert  \langle e^{\varepsilon \Delta} z_t,e_m \rangle
\bigr\vert \times 
\langle n_T,
e_m^{\pm}
\rangle
\\
&\leq 
c_{k,\varepsilon} \frac{m^2 \vee 1}{m^{2k} \vee 1}
\| n_T \|_{2,-2}
 \sup_{t \in [0,T]} \lVert z_t\rVert_2, 
\end{split}
\end{equation}
with the last line following from 
\eqref{eq:integral:RS:bound:1}
together with the bound 
$\| e_m^{\pm} \|_{2,2} \leq c (m^2 \vee 1)$.
This shows
\eqref{eq:integral:RS:bound:0}.
As for the proof of
\eqref{eq:integral:RS:bound:0:++}, 
we just make use of  
\eqref{eq:rem:weak:limit:1}. 
\qed
\end{proof}

Lemma 
\ref{lem:integral:RS:bound:0}
allows us to {make the following definition}:
\begin{definition}
\label{def:integral:RS}
For two
(deterministic) 
curves 
$(n_t)_{t \geq 0}$ 
and 
$(z_t)_{t \geq 0}$, 
with 
$(n_t)_{t \geq 0}$ satisfying 
{\rm ({\bf E1})} and 
{\rm ({\bf E2})} and with $(z_t)_{t \geq 0}$ 
{being a continuous function from 
$[0,\infty)$ to  $L^2_{\rm sym}({\mathbb S})$}, we can define, almost surely, for 
any $\varepsilon>0$  
the integral process
{
$
( \int_0^t   e^{\varepsilon \Delta} z_r \cdot d n_r )_{t \geq 0}
$
}
 as the limit, for the uniform topology on compact subsets:
\begin{equation*}
\int_0^t   e^{\varepsilon \Delta} z_r \cdot d n_r
:=
\lim_{M \rightarrow \infty} 
\sum_{m=0}^M 
\biggl( 
\int_0^t   \bigl\langle e^{\varepsilon \Delta} z_r,e_m \bigr\rangle  d \langle n_r,e_m^+ \rangle
- 
\int_0^t   \bigl\langle e^{\varepsilon \Delta} z_r,e_m \bigr\rangle  d \langle n_r,e_m^- \rangle
\biggr). 
\end{equation*}
It satisfies
\begin{equation}
\label{eq:integral:RS:bound:0:bis}
\forall T \geq 0,
 \quad 
\sup_{t \in [0,T]} 
\biggl\vert 
\int_0^t  e^{\varepsilon \Delta} z_r
\cdot
d   n_r  
  \biggr\vert 
\leq {c_{\varepsilon}} \| n_T \|_{2,-2} \times \sup_{t \in [0,T]} \| z_t \|_2.
\end{equation}
When $(n_t)_{t \geq 0}$ is understood as a realisation of 
$(\eta_t)_{t \geq 0}$, the term 
$\| n_T \|_{2,-2}$ 
becomes $\| \eta_T \|_{2,-2}$ and can be upper bounded as in  
\eqref{eq:integral:RS:bound:0:++}.
\end{definition}

\begin{remark}
\label{rem:RS}
The following {two} remarks are in order: 
\begin{enumerate}
\item In Definition \ref{def:integral:RS}, not only is the convergence uniform in time in a fixed segment $[0,T]$, for some $T>0$, but it is also uniform
with respect to $(z_t)_{0 \leq t \leq T}$ 
when the latter is required to satisfy $\sup_{t \in [0,T]} \| z_t \|_2 \leq A$ for some 
given $A>0$. This is a direct consequence of the form of the rate of convergence given by \eqref{eq:integral:RS:bound:0}.
\item  
{
Lemma \ref{lem:integral:RS:bound:0}
and 
Definition
\ref{def:integral:RS}
extend 
to the case when 
$(z_t)_{t \geq 0}$ is piecewise constant (i.e., there exists  an increasing locally-finite 
sequence of time indices $(t_k)_{k \geq 0}$, with $t_0=0$, such that 
$t \in [t_k,t_{k+1}) \mapsto z_t  \in 
L^2_{\rm sym}({\mathbb S})$ is constant for each $k \geq 0$)}. 
\end{enumerate}
\end{remark}

The following lemma explains
the interest of the second remark right above.

\begin{lemma}
\label{lem:RS:3}
Within the same framework as in Definition 
\ref{def:integral:RS}
but with $(z_t)_{t \geq 0}$ therein being piecewise constant
(with the same jumping times $(t_k)_{k \geq 0}$ as in Remark 
\ref{rem:RS}),
{$(\int_0^t   e^{\varepsilon \Delta} z_r \cdot d n_r)_{t \geq 0}$ coincides with the 
process defined by standard Riemann sums, i.e.,}
\begin{equation*}
\int_0^t   e^{\varepsilon \Delta} z_r \cdot d n_r
= \sum_{k \geq 0 : t_k \leq t} 
\bigl\langle 
e^{\varepsilon \Delta} z_{t_k} , 
\eta_{t_{k+1} \wedge t} - 
\eta_{t_k} 
\rangle, \quad t \geq 0.
\end{equation*}
In particular, if $z_{t_k}$, for each $k \geq 0$, is symmetric non-increasing, then
\begin{equation*}
\forall t \geq 0, \quad 
\int_0^t   e^{\varepsilon \Delta} z_r \cdot d \eta_r
\geq 0.
\end{equation*}
\end{lemma}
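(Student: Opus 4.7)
The strategy is to simply unwind both sides of the defining equality in Definition \ref{def:integral:RS} using the piecewise constant structure of $(z_r)_{r \geq 0}$, and then to invoke property ({\bf E2}) together with Lemma \ref{lem:heatpreservessymmetry} to obtain the sign.

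First, fix $t \geq 0$. Since the mesh $(t_k)_{k \geq 0}$ is locally finite, only finitely many indices $k$ contribute to the sum, so we can treat everything as a finite sum. For each fixed $m \in {\mathbb N}_0$, the integrand $r \mapsto \langle e^{\varepsilon \Delta} z_r, e_m\rangle$ is piecewise constant, equal to $\langle e^{\varepsilon \Delta} z_{t_k}, e_m\rangle$ on $[t_k,t_{k+1})$; since $r \mapsto \langle n_r, e_m^{\pm}\rangle$ is continuous and monotone by ({\bf E2}), the Riemann--Stieltjes integral reduces to
\begin{equation*}
\int_0^t \langle e^{\varepsilon \Delta} z_r, e_m\rangle \, d\langle n_r, e_m^{\pm}\rangle
= \sum_{k \geq 0 : t_k \leq t} \langle e^{\varepsilon \Delta} z_{t_k}, e_m\rangle \bigl( \langle n_{t_{k+1} \wedge t}, e_m^{\pm}\rangle - \langle n_{t_k}, e_m^{\pm}\rangle \bigr).
\end{equation*}
Forming the difference corresponding to $e_m^+$ and $e_m^-$ and using $e_m = e_m^+ - e_m^-$, the partial sum at level $M$ in Definition \ref{def:integral:RS} rewrites as
\begin{equation*}
\sum_{k \geq 0 : t_k \leq t} \sum_{m=0}^M \langle e^{\varepsilon \Delta} z_{t_k}, e_m\rangle \, \langle n_{t_{k+1} \wedge t} - n_{t_k}, e_m\rangle.
\end{equation*}

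Next, I would pass to the limit $M \to \infty$. Since $e^{\varepsilon \Delta} z_{t_k} \in H^2_{\rm sym}({\mathbb S})$ (in fact $C^\infty$, thanks to the smoothing by the heat semigroup), and $n_{t_{k+1} \wedge t} - n_{t_k} \in H^{-2}_{\rm sym}({\mathbb S})$, the inner $m$-sum is an $H^2$--$H^{-2}$ duality pairing and converges, by Parseval, to $\langle e^{\varepsilon \Delta} z_{t_k}, n_{t_{k+1} \wedge t} - n_{t_k}\rangle$. The outer $k$-sum being finite, one may exchange limit and sum, yielding
\begin{equation*}
\int_0^t e^{\varepsilon \Delta} z_r \cdot d n_r = \sum_{k \geq 0 : t_k \leq t} \langle e^{\varepsilon \Delta} z_{t_k}, n_{t_{k+1} \wedge t} - n_{t_k}\rangle,
\end{equation*}
which is exactly the Riemann-sum formula (interpreting the bracket as the statement's pairing between $\eta_{t_{k+1} \wedge t} - \eta_{t_k}$ and $e^{\varepsilon \Delta} z_{t_k}$, in the realisation where $n = \eta$).

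For the second assertion, assume each $z_{t_k}$ is symmetric non-increasing. By Lemma \ref{lem:heatpreservessymmetry}, $e^{\varepsilon \Delta} z_{t_k}$ is still symmetric non-increasing on the circle, and by the regularising effect of the heat semigroup it lies in $H^2_{\rm sym}({\mathbb S})$. Applying property ({\bf E2}) to the test function $u = e^{\varepsilon \Delta} z_{t_k}$ gives
\begin{equation*}
\langle \eta_{t_{k+1} \wedge t}, e^{\varepsilon \Delta} z_{t_k}\rangle - \langle \eta_{t_k}, e^{\varepsilon \Delta} z_{t_k}\rangle \geq 0, \quad k \geq 0.
\end{equation*}
Summing over $k$ with $t_k \leq t$ yields the claimed non-negativity.

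The computation is essentially bookkeeping; the only subtle point is checking that the $M \to \infty$ partial sums for each fixed $k$ really do converge to the duality pairing, and this is immediate once one observes that $e^{\varepsilon \Delta} z_{t_k}$ belongs to any Sobolev space of positive order. There is no serious obstacle here.
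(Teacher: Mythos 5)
Your proof is correct and follows essentially the same route as the paper's: expand each Riemann--Stieltjes integral using the piecewise constant integrand, recombine via $e_m=e_m^+-e_m^-$, exchange the sums, identify the $M\to\infty$ limit for each $k$ as the $H^2_{\rm sym}$--$H^{-2}_{\rm sym}$ duality pairing, and conclude the sign from Lemma \ref{lem:heatpreservessymmetry} together with ({\bf E2}). No gaps worth flagging.
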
  

Before we prove 
Lemma
\ref{lem:RS:3}, we state the following important corollary.

\begin{corollary}
\label{cor:RS:3}
Within the same framework as in Definition 
\ref{def:integral:RS}, with $(z_t)_{t \geq 0}$ therein being continuous, 
we let, for any 
$k \in {\mathbb N}$, 
$z^k=(z_t^k)_{t \geq 0}$ be the piecewise constant approximation of 
$z=(z_t)_{t \geq 0}$ of stepsize $1/k$, namely
$
z_t^k := z_{ \lfloor k t \rfloor/k},$ for $ t \geq 0.
$
Then, the following convergence holds true, uniformly on compact subsets, 
\begin{equation*}
\int_0^t   e^{\varepsilon \Delta} z_r \cdot d n_r
= 
\lim_{k \rightarrow \infty}
\int_0^t   e^{\varepsilon \Delta} z_r^k \cdot d n_r 
, \quad t \geq 0.
\end{equation*}
{In particular, if $z_{t}$ is in $U^2({\mathbb S})$ for each $t \geq 0$, then, 
for all $t \geq 0$, 
$\int_0^t   e^{\varepsilon \Delta} z_r \cdot d n_r
\geq 0$.}
\end{corollary}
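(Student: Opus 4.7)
My plan is to derive both assertions directly from the uniform bound already established in Lemma \ref{lem:integral:RS:bound:0}, together with Lemma \ref{lem:RS:3} for the sign assertion.

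First, I would focus on the convergence statement. The key observation is that the Fourier-mode-by-Fourier-mode construction in Definition \ref{def:integral:RS} is visibly linear in the integrand: indeed, both
$\int_0^t e^{\varepsilon \Delta} z_r \cdot d n_r$
and
$\int_0^t e^{\varepsilon \Delta} z_r^k \cdot d n_r$
are defined by the same formula \eqref{eq fouriersplit2}, the only distinction being that the integrand for the second is piecewise constant (covered by Remark \ref{rem:RS}(2)). In particular, for each $T>0$,
$\int_0^t e^{\varepsilon \Delta} z_r \cdot d n_r - \int_0^t e^{\varepsilon \Delta} z_r^k \cdot d n_r$
equals the Fourier-series integral associated with $r \mapsto z_r - z_r^k$, which takes values in $L^2_{\rm sym}({\mathbb S})$ (being a difference of two such functions, on each constancy interval of $z^k$). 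Applying the estimate \eqref{eq:integral:RS:bound:0:bis} from Definition \ref{def:integral:RS} to this difference gives
\begin{equation*}
\sup_{t \in [0,T]} \Bigl| \int_0^t e^{\varepsilon \Delta} z_r \cdot d n_r - \int_0^t e^{\varepsilon \Delta} z_r^k \cdot d n_r \Bigr| \leq c_\varepsilon \| n_T \|_{2,-2} \sup_{t \in [0,T]} \| z_t - z_t^k \|_2.
\end{equation*}

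Since $t \mapsto z_t$ is continuous from $[0,T]$ into $L^2_{\rm sym}({\mathbb S})$, it is uniformly continuous there, so $\sup_{t \in [0,T]} \| z_t - z_t^k \|_2 \to 0$ as $k \to \infty$. This yields the claimed uniform convergence on compact subsets. For the second assertion, if each $z_t$ is non-increasing (symmetric), then by construction $z_t^k = z_{\lfloor k t \rfloor /k}$ has, at each constancy time $t_j = j/k$, a value which is itself non-increasing. Lemma \ref{lem:RS:3} therefore yields $\int_0^t e^{\varepsilon \Delta} z_r^k \cdot d n_r \geq 0$ for every $t \geq 0$ and $k \in {\mathbb N}$. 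Passing to the limit via the first part preserves this inequality, and we conclude.

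The only delicate point I anticipate is to verify cleanly that the Fourier-series construction of Definition \ref{def:integral:RS} really is linear on the sum of a continuous integrand and a piecewise constant one, or (equivalently) that the bound \eqref{eq:integral:RS:bound:0:bis} extends so as to apply directly to $z - z^k$. Both facts follow routinely from the mode-by-mode definition and the fact that Lemma \ref{lem:integral:RS:bound:0} holds uniformly for any continuous or piecewise constant $L^2_{\rm sym}({\mathbb S})$-valued integrand, with the right-hand side depending only on $\sup_{t \in [0,T]} \| \cdot \|_2$; but it is worth making this linearity explicit, since the integrand $z - z^k$ is neither continuous nor piecewise constant in general.
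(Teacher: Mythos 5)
Your argument is correct and coincides with the paper's own proof: both rely on the linearity of the integral to write the difference as $\int_0^t e^{\varepsilon\Delta}(z_r - z_r^k)\cdot dn_r$, bound it via \eqref{eq:integral:RS:bound:0:bis} and the uniform continuity of $z$ on compacts, and obtain the sign property from Lemma \ref{lem:RS:3} followed by passage to the limit. The ``delicate point'' you flag is handled exactly as you suggest, since the mode-by-mode Riemann--Stieltjes construction and the estimate of Lemma \ref{lem:integral:RS:bound:0} apply verbatim to the regulated integrand $z - z^k$, with only $\sup_{t\in[0,T]}\|z_t - z_t^k\|_2$ entering the bound.
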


\begin{proof}[of Lemma
\ref{lem:RS:3}]
{Back to Definition 
\ref{def:integral:RS} - but for a path of the type discussed in the second item of Remark \ref{rem:RS} - we then observe that, for any integer $M \geq 1$,} 
\begin{equation*}
\begin{split}
&\sum_{m=0}^M 
\biggl( 
\int_0^t   \bigl\langle e^{\varepsilon \Delta} z_r,e_m \bigr\rangle  d \langle n_r,e_m^+ \rangle
- 
\int_0^t   \bigl\langle e^{\varepsilon \Delta} z_r,e_m \bigr\rangle  d \langle n_r,e_m^- \rangle
\biggr)
\\
&= \sum_{m=0}^M  \sum_{k \geq 0: t_k \leq t} 
   \bigl\langle e^{\varepsilon \Delta} z_{t_k},e_m \bigr\rangle  \Bigl[ 
   \Bigl( \langle n_{t_{k+1} \wedge t},e_m^+ \rangle
   -
   \langle n_{t_k},e_m^+ \rangle \Bigr) 
   - \Bigl( \langle n_{t_{k+1} \wedge t},e_m^- \rangle
   -
   \langle \eta_{t_k},e_m^- \rangle
   \Bigr)
   \Bigr]
\\
&= \sum_{m=0}^M  \sum_{k \geq 0: t_k \leq t}  
   \bigl\langle e^{\varepsilon \Delta} z_{t_k},e_m \bigr\rangle   
   \Bigl( \langle n_{t_{k+1} \wedge t},e_m \rangle
   -
   \langle n_{t_k},e_m \rangle \Bigr).
\end{split}
\end{equation*}
{Calling $I_M(t)$ the sum in the first line
and 
exchanging the  sums in the last line}, we get
\begin{equation*}
\begin{split}
&I_M(t) = \sum_{k \geq 0: t_k \leq t} \sum_{m=0}^M 
\biggl( 
   \bigl\langle e^{\varepsilon \Delta} z_{t_k},e_m \bigr\rangle   
   \Bigl( \langle n_{t_{k+1} \wedge t},e_m \rangle
   -
   \langle n_{t_k},e_m \rangle \Bigr)
\biggr).
\end{split}
\end{equation*}
Since, for each $k \in {\mathbb N}$, 
$n_{t_k}$ belongs to $H^{-2}_{\rm sym}({\mathbb S})$ 
and 
$e^{\varepsilon \Delta} z_{t_k}$ to $H^2_{\rm sym}({\mathbb S})$, we have 
\begin{equation*}
\forall k \in {\mathbb N}, 
\quad 
\lim_{M \rightarrow \infty} 
\sum_{m=0}^M 
\biggl( 
   \bigl\langle e^{\varepsilon \Delta} z_{t_k},e_m \bigr\rangle   
   \Bigl( \langle n_{t_{k+1} \wedge t},e_m \rangle
   -
   \langle \eta_{t_k},e_m \rangle \Bigr)
\biggr) =  \bigl\langle e^{\varepsilon \Delta} z_{t_k},  n_{t_{k+1} \wedge t}
   - n_{t_k} \bigr\rangle,
\end{equation*}
from which we easily deduce that 
\begin{equation*}
\begin{split}
\int_0^t   e^{\varepsilon \Delta} z_r \cdot d {n_r}
&:=
\lim_{M \rightarrow \infty} 
{I_M(t)}
= 
\sum_{k : t_k \leq t} \bigl\langle e^{\varepsilon \Delta} z_{t_k},  n_{t_{k+1} \wedge t}
   - n_{t_k} \bigr\rangle,
\end{split}
\end{equation*}
the convergence being uniform with respect to $t$ in compact subsets. 

{Whenever $z_{t_k}$ is in $U^2({\mathbb S})$,} 
so is 
$e^{\varepsilon \Delta} z_{t_k}$, see Lemma 
\ref{lem:heatpreservessymmetry}.
By the second item in Proposition 
\ref{prop:weak:limit:1}, we then obtain that 
$\langle e^{\varepsilon \Delta} z_{t_k},  n_{t_{k+1} \wedge t}
   - n_{t_k} \rangle \geq 0$. 
   \qed
\end{proof}

It remains to check 
Corollary \ref{cor:RS:3}.

\begin{proof}[Proof of Corollary \ref{cor:RS:3}]
The first claim in the statement of 
Corollary \ref{cor:RS:3}
is a consequence of \eqref{eq:integral:RS:bound:0:bis}, using the linearity of the integral,
 which says that 
\begin{equation*} 
\int_0^t e^{\varepsilon \Delta} z_r \cdot d n_r
- 
\int_0^t e^{\varepsilon \Delta} z^k_r \cdot d n_r
=
\int_0^t 
e^{\varepsilon \Delta} \bigl( z_r - z_r^k ) \cdot d n_r, \quad t \in [0,T],
\end{equation*}
together 
{with the fact 
that 
$\lim_{k \rightarrow \infty} \sup_{0 \leq t \leq T} \| z_t - z_t^k \|_2 = 0$.}
As for the second claim, it follows from Lemma 
\ref{lem:RS:3}. 
\qed
\end{proof}

\begin{remark}
\label{rem:RS:2epsilon}
{Notice that (with the presence of the factor $2$ in the exponential below)}
\begin{equation*}
\begin{split}
\int_0^t   e^{2 \varepsilon \Delta} z_r \cdot d n_r
=
\int_0^t   e^{\varepsilon \Delta} z_r \cdot d \bigl( e^{\varepsilon \Delta} n_r
\bigr)
&=
\lim_{M \rightarrow \infty} 
\sum_{m=0}^M 
\int_0^t   \bigl\langle e^{\varepsilon \Delta} z_r,e_m \bigr\rangle  d \langle e^{\varepsilon \Delta}  \eta_r,e_m \rangle
\\
&=
\lim_{M \rightarrow \infty} 
\sum_{m=0}^M 
\int_0^t   \bigl\langle e^{\varepsilon \Delta} z_r,e_m \bigr\rangle  d \langle \eta_r,e^{\varepsilon \Delta} e_m \rangle,
\end{split}
\end{equation*}
where it must be stressed that $(e^{\varepsilon \Delta} n_t)_{t \geq 0}$ on the first line satisfies 
({\bf E1}) and ({\bf E2}). While ({\bf E1}) follows from the contractive properties of the heat semigroup, 
({\bf E2})
follows from Lemma 
\ref{lem:heatpreservessymmetry} {(in words, $e^{\varepsilon \Delta} u \in U^2({\mathbb S})$ if $u \in U^2({\mathbb S})$)}. 

A proof of the above identity is as follows. 
By Corollary 
\ref{cor:RS:3} (and with the same notation), we can write
{
	$
\int_0^t   e^{2 \varepsilon \Delta} z_r \cdot d n_r
=
\lim_{k \rightarrow \infty}
\int_0^t   e^{2 \varepsilon \Delta} z_r^k \cdot d n_r.
$
}
Then, Lemma 
\ref{lem:RS:3} allows one to write 
the right-hand side as a Riemann sum. The proof is completed by expanding the terms in the Riemann sum in Fourier coefficients, exactly 
as in the proof of Lemma \ref{lem:RS:3}.
\end{remark}
\begin{remark}
\label{rem:integral:pathwise}
Definition 
\ref{def:integral:RS}
supplies us with the integral 
{
$
( \int_0^t e^{\varepsilon \Delta} z_r \cdot d n_r )_{t \geq 0}, 
$
}
when $(z_t)_{t \geq 0}$ is a deterministic continuous path with values in $L^2_{\rm sym}({\mathbb S})$
and $(n_t)_{t \geq 0}$ satisfies 
({\bf E1}) and ({\bf E2}). Importantly, one can replace $(z_t)_{t \geq 0}$ by the realisation of a (stochastic) 
continuous process $(Z_t)_{t \geq 0}$ 
with values in $L^2_{\rm sym}({\mathbb S})$ and 
$(n_t)_{t \geq 0}$ by 
 the same process $(\eta_t)_{t \geq 0}$ as in Proposition 
\ref{prop:weak:limit:1}.
The integral
is denoted 
{
$
( \int_0^t e^{\varepsilon \Delta} Z_r \cdot d \eta_r )_{t \geq 0}.
$
}
%
It is continuous in time. When $(Z_t)_{t \geq 0}$ is adapted to the filtration 
${\mathbb F}$ used in 
the statement of
Proposition 
\ref{prop:weak:limit:1}, the integral is also 
adapted to ${\mathbb F}$, due to  
Lemma
\ref{lem:RS:3}
and
Corollary
\ref{cor:RS:3}.
\end{remark}

\subsection{Orthogonality of the reflection}
\label{subse:3:3}
We now come to the last property in the description of the weak limits:

\begin{proposition}
\label{prop:reflection:ortho}
Let $(X_t,W_t)_{t \geq 0}$ be a weak limit
of the processes $\{ (\tilde X^h_t,W_t)_{t \geq 0}\}_{h >0}$ as $h$ tends to $0$, 
as given by Proposition \ref{prop:weak:limit:1}. 
Then, 
for any $t \geq s \geq 0$, 
\begin{equation}
	\label{eq zeroY}
	\lim_{\varepsilon\searrow0}\mathbb{E}\left[ \int_s^t  e^{\varepsilon\Delta}X_r   \cdot d  \eta_r \right]= 0.
\end{equation}
\end{proposition}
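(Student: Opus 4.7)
My plan is to derive two parallel identities for $\mathbb{E}\|e^{\alpha/2\Delta}X_r\|_2^2$ --- a continuum It\^o identity for $\alpha=\varepsilon>0$, and a scheme identity at $\alpha=0$ in which Cavalieri's principle eliminates the rearrangement defect --- and to compare them. Their difference will exhibit $2\mathbb{E}\!\int_s^t e^{\varepsilon\Delta}X_r\cdot d\eta_r$ as a sum of three terms each vanishing as $\varepsilon\downarrow 0$.

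\emph{Continuum It\^o identity.} Writing $X_r=V_r+Y_r$ as in Remark~\ref{rem:weak:limit:1}, I would first verify that $e^{\varepsilon/2\Delta}\eta$ is of bounded variation in $L^2(\mathbb S)$: for $v\in L^2(\mathbb S)$, decomposing $e^{\varepsilon/2\Delta}v$ via its Fourier series and splitting each $e_m = e_m^+ - e_m^-$ as in \eqref{eq fouriersplit} into two symmetric non-increasing $H^2_{\rm sym}$ functions, Item~2 of Proposition~\ref{prop:weak:limit:1} presents $t\mapsto\langle e^{\varepsilon/2\Delta}\eta_t,v\rangle$ as a difference of non-decreasing functions. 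Then It\^o's formula applied to $\|e^{\varepsilon/2\Delta}V_r\|_2^2$, the ordinary chain rule applied to $\|e^{\varepsilon/2\Delta}Y_r\|_2^2$, and integration by parts on the cross term $\langle e^{\varepsilon/2\Delta}V_r,e^{\varepsilon/2\Delta}Y_r\rangle$ combine to give
\begin{equation*}
2\mathbb{E}\!\int_s^t e^{\varepsilon\Delta}X_r\cdot d\eta_r = \mathbb{E}\bigl[\|e^{\varepsilon/2\Delta}X_t\|_2^2-\|e^{\varepsilon/2\Delta}X_s\|_2^2\bigr] + 2\mathbb{E}\!\int_s^t\|De^{\varepsilon/2\Delta}X_r\|_2^2\,dr - (t-s)\!\sum_m\lambda_m^2\, e^{-4\pi^2\varepsilon m^2}.\quad(\star)
\end{equation*}

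\emph{Scheme identity.} Setting $G_n := e^{h\Delta}X_n^h + M_n$ with $M_n:=\int_0^h e^{(h-u)\Delta}\,dW_u^{n+1}$ so that $X_{n+1}^h = G_n^*$, the $\mathcal F_{nh}$-martingale property of $M_n$ together with $\|u\|_2^2-\|e^{h\Delta}u\|_2^2 = 2\int_0^h\|De^{u\Delta}u\|_2^2\,du$ produces
\begin{equation*}
\mathbb{E}\|e^{\varepsilon/2\Delta}X_{n+1}^h\|_2^2 - \mathbb{E}\|e^{\varepsilon/2\Delta}X_n^h\|_2^2 = -2\mathbb{E}\!\int_0^h\|De^{(u+\varepsilon/2)\Delta}X_n^h\|_2^2\,du + \mathbb{E}\|e^{\varepsilon/2\Delta}M_n\|_2^2 + E_n,
\end{equation*}
with rearrangement defect $E_n := \mathbb{E}[\|e^{\varepsilon/2\Delta}G_n^*\|_2^2-\|e^{\varepsilon/2\Delta}G_n\|_2^2]\geq 0$ (non-negativity from Lemma~\ref{lem keyineq}). \emph{The crucial observation is that Cavalieri's principle $\|G_n^*\|_2=\|G_n\|_2$ forces $E_n\equiv 0$ at $\varepsilon=0$.} Telescoping from $N_s$ to $N_t$ and passing $h\downarrow 0$ --- using Proposition~\ref{prop:tightness} and Corollary~\ref{cor:2.5} for compactness and uniform integrability, and a mode-by-mode Fourier Riemann-sum argument for the Dirichlet term --- yields at $\varepsilon=0$ the exact identity
\begin{equation*}
\mathbb{E}\|X_t\|_2^2-\mathbb{E}\|X_s\|_2^2 = -2\mathbb{E}\!\int_s^t\|DX_r\|_2^2\,dr + (t-s)\sum_m\lambda_m^2,
\end{equation*}
and at $\varepsilon>0$ the analogue with an additional $\lim_{h\downarrow 0}\sum_n E_n$ on the right-hand side.

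\emph{Conclusion.} Substituting the $\varepsilon>0$ identity into $(\star)$ gives $2\mathbb{E}\!\int_s^t e^{\varepsilon\Delta}X_r\cdot d\eta_r = \lim_{h\downarrow 0}\sum_n E_n$, and subtracting the $\varepsilon=0$ identity re-expresses the right-hand side as
\begin{equation*}
\mathbb{E}\bigl[(\|e^{\varepsilon/2\Delta}X_t\|_2^2-\|X_t\|_2^2)-(\|e^{\varepsilon/2\Delta}X_s\|_2^2-\|X_s\|_2^2)\bigr] + 2\mathbb{E}\!\int_s^t\bigl[\|De^{\varepsilon/2\Delta}X_r\|_2^2-\|DX_r\|_2^2\bigr]dr + (t-s)\!\sum_m(1-e^{-4\pi^2\varepsilon m^2})\lambda_m^2,
\end{equation*}
whose three summands each vanish as $\varepsilon\downarrow 0$: the first by continuity of the heat semigroup on $L^2(\mathbb S)$ and dominated convergence via Proposition~\ref{prop:weak:limit:2}, the second by the same argument on $H^1(\mathbb S)$ --- using the $H^1$-regularity of $X_r$ inherited in the limit from Lemma~\ref{lem unif est derivative p1} --- and the third by monotone convergence. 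The delicate step is the passage $h\downarrow 0$ in $\mathbb{E}\sum_n\int_0^h\|De^{u\Delta}X_n^h\|_2^2\,du$: weak convergence of $\tilde X^h$ in $C([0,T],L^2_{\rm sym}(\mathbb S))$ alone does not yield convergence of the Dirichlet integral, and a Fourier-mode-wise argument combined with the uniform $H^1$ bounds of Lemma~\ref{lem unif est derivative p1} is unavoidable.
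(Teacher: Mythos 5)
Your overall architecture (a scheme energy estimate compared against a continuum It\^o identity for the smoothed process) is the same as the paper's, and your identity $(\star)$ is essentially the paper's expansion of $\|e^{\varepsilon\Delta}X_t\|_2^2$ via It\^o's formula and Remark \ref{rem:RS:2epsilon}. The gap is in your ``scheme identity'' at $\varepsilon=0$: you need the \emph{exact} limiting identity
$\mathbb{E}\|X_t\|_2^2-\mathbb{E}\|X_s\|_2^2=-2\mathbb{E}\int_s^t\|DX_r\|_2^2\,dr+(t-s)\sum_m\lambda_m^2$,
and its proof requires showing that no Dirichlet energy is lost when passing $h\downarrow 0$, i.e.
$\limsup_{h}2\sum_n\mathbb{E}\int_0^h\|De^{u\Delta}X_n^h\|_2^2\,du\le 2\mathbb{E}\int_s^t\|DX_r\|_2^2\,dr$.
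Weak convergence of $\tilde X^h$ in $\mathcal C([0,T],L^2_{\rm sym}(\mathbb S))$ together with the uniform $H^1$ bounds of Lemma \ref{lem unif est derivative p1} only yields the \emph{opposite} inequality (lower semicontinuity of the convex energy along weak limits, exactly as in \eqref{eq zeroYschemeRewrite2}--\eqref{eq zeroYschemeRewrite3}); the upper bound amounts to strong $L^2_t H^1_x$-type convergence, or uniform-in-$h$ control of the high-frequency tail of the Dirichlet sum, which a mode-by-mode argument cannot supply (per-mode convergence is easy, but interchanging the sum over modes with $h\downarrow0$ needs tail equi-integrability that an $H^1$ bound does not give). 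Worse, given $(\star)$, monotone convergence and the non-negativity of the reflection integral, your claimed identity is \emph{equivalent} to the proposition you are proving, so the proposal assumes the statement at its most delicate point.

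The way out is the one the paper takes: do not aim at an identity from the scheme, only at the one-sided inequality
$\mathbb{E}\|X_t\|_2^2-\mathbb{E}\|X_s\|_2^2+2\int_s^t\mathbb{E}\|De^{\varepsilon\Delta}X_r\|_2^2\,dr\le c_{1,\lambda}(t-s)$,
which needs only $\|De^{\varepsilon\Delta}x\|_2\le\|De^{u\Delta}x\|_2$ for $u\le h\le\varepsilon$, preservation of the $L^2$ norm by the rearrangement, and the crude bound $\mathbb{E}\|M_n\|_2^2\le c_{1,\lambda}h$. Plugging this into $(\star)$ and using $c_{1,\lambda}t=\mathbb{E}\|W_t\|_2^2$ shows the limit of $\mathbb{E}\int_0^t e^{\varepsilon\Delta}X_r\cdot d\eta_r$ is $\le 0$, and non-negativity of the integral (Corollary \ref{cor:RS:3}, which your argument never invokes) closes the proof; it is precisely this structural positivity that lets one avoid the strong-convergence issue you left open. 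A minor further point: your claim $E_n\ge 0$ is not Lemma \ref{lem keyineq} (which concerns averaged Dirichlet energies) but the Riesz rearrangement inequality applied with the kernel $\Gamma(2\varepsilon,\cdot)$, together with Lemma \ref{lem:heatkernelisdecreasing}; as it happens, your argument discards $E_n$ anyway, so this is cosmetic.
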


To appreciate the scope of the above statement, the reader should recall that 
$(X_t)_{t \geq 0}$ takes values in $U^2({\mathbb S})$. Therefore, 
Corollary 
\ref{cor:RS:3} yields, almost surely, 
{
	$
\int_s^t  e^{\varepsilon\Delta}X_r   \cdot d  \eta_r  \geq 0,
$
} 
for any $t \geq s \geq 0$. 
In particular, Fatou's lemma (together with the time continuity of the integral) implies that, 
with probability 1, for any $t \geq s \geq 0$, 
\begin{equation*}
	\liminf_{\varepsilon\searrow0}  \int_s^t  e^{\varepsilon\Delta}X_r   \cdot d  \eta_r = 0.
\end{equation*}
We regard this property as a (weak) form of orthogonality between $X_r$ and $d \eta_r$, recalling that the orthogonality property is standard in reflected equations (see for instance the seminal work \cite{Lions:Sznitman}). Moreover,  we obtain the following corollary
as an important by-product of the proof {of Proposition \ref{prop:reflection:ortho}.}

\begin{corollary}
\label{cor:reflection:ortho}
Let $(X_t,W_t)_{t \geq 0}$ be a weak limit of the processes $\{ (\tilde X^h_t,W_t)_{t \geq 0}\}_{h >0}$ as $h$ tends to $0$, 
as given by Proposition \ref{prop:weak:limit:1}. 
Then, 
 {with probability 1, 
 the process $(X_t)_{0 \leq t \leq T}$ 
has time-continuous trajectories with respect to 
 $\| \cdot \|_2$ and, for any $T>0$, it
 takes
 values in $L^2([0,T],H^1_{\rm sym}({\mathbb S}))$ with
 ${\mathbb E} \int_0^T \| D X_t \|^2_2 dt < \infty.$
 With probability 1, the process $(\eta_t)_{0 \le t \le T}$ has time-continuous trajectories with respect to $\| \cdot \|_{2,-2}$. }
 \end{corollary}

\begin{proof}[of Proposition \ref{prop:reflection:ortho} and Corollary \ref{cor:reflection:ortho}]
{ \ }
\vspace{5pt}

\noindent \textit{First step.} 
We first prove 
that 
$(X_t)_{0 \leq t \leq T}$ 
 takes
 values in $L^2([0,T],H^1_{\rm sym}({\mathbb S}))$
{(which is not a direct corollary of Proposition 
\ref{prop:reflection:ortho}, but which {comes as} a consequence of the global architecture of the proof)}.
 In order to do so, 
we return to the scheme 	\eqref{eq scheme n}. 
For a given $h \in (0,1]$ 
and for any integer $n \geq 0$,  
\begin{equation}\label{eq zeroYschemeRewrite}
\begin{split}
	{\E \bigl[\lVert X_{n+1}^h \rVert^2_2\bigr] \leq     \E \bigl[\lVert e^{h\Delta} X_{n}^h \rVert^2_2\bigr] + \E \left[\left\lVert  \int_0^{h} e^{(h-s)\Delta}dW^{n+1}_s\right\rVert^2_2\right]}.\\
\end{split}
\end{equation}
Then, using the fact that $(e^{t\Delta}X^h_n)_{0 \leq t \leq h}$ solves the heat equation, one has the equality
\begin{equation}
\label{eq heatestimate} 
\E \biggl[\lVert e^{h\Delta}X^h_n\rVert_2^2 + 2 \int_0^h \lVert De^{s\Delta} X^h_n\rVert_2^2ds \biggr]= \E \bigl[\lVert X^h_n \rVert_2^2 \bigr]. \\ 
\end{equation}
Considering $h\leq \varepsilon$ {for some $\varepsilon \in (0,1)$}, we have $\E[ \lVert e^{\varepsilon\Delta}DX^h_n\rVert_2^2  ] \leq \E[ \lVert De^{s\Delta} X^h_n\rVert_2^2 ]$
{for $s \in (0,h]$}.
{By \eqref{eq zeroYschemeRewrite}, \eqref{eq heatestimate}, and Lemma \ref{lem unif est noise}
(for the definition of $c_{1,\lambda}$)}, we get  %
\begin{equation*}%
	\begin{split}
		\E \bigl[\lVert X_{n+1}^h \rVert^2_2\bigr] - \E \bigl[\lVert X^h_n \rVert_2^2 \bigr]     + 2 h \E\left[    \lVert De^{\varepsilon\Delta} X^h_n\rVert_2^2 \right] \leq &   
		c_{1,\lambda} h .\\
	\end{split}
\end{equation*}
And then, for $t \geq s \geq 0$ and {$N_t:= \lfloor t/h \rfloor$ and 
$N_s:= \lfloor s/h \rfloor$}, we have 
\begin{equation*}
	\E \bigl[\lVert X_{N_t}^h \rVert^2_2\bigr] - \E \bigl[\lVert X^h_{N_s} \rVert_2^2 \bigr]     +2  h 
	\sum_{n=N_s}^{N_t-1} \E\left[    \lVert De^{\varepsilon\Delta} X^h_n\rVert_2^2 \right] \leq    c_{1,\lambda} h (N_t- N_s).
	\end{equation*} 
	Choosing $s=0$, lower bounding 
	$\| X_{N_t}^h \|_2^2$ by 
	$\| e^{\varepsilon \Delta} X^h_{N_t} \|^2_2$, 
	recalling the notation \eqref{eq interpolation} and combining tightness of the family 
$\{ (\tilde X^h_t)_{t \geq 0}\}_{h \in (0,1]}$ in $\cC([0,\infty),H_{\rm sym}^{-1}({\mathbb S}))$ with Corollary \ref{cor:2.5}
(which supplies us with uniform integrability properties),
we obtain
\begin{equation}
\label{eq zeroYschemeRewrite2}
	\E \bigl[\lVert e^{\varepsilon \Delta} \tilde X_{t}^h \rVert^2_2\bigr]  
	-
	\E \bigl[\lVert   X_{0} \rVert^2_2\bigr]    +  2
	\int_0^t \E\left[    \lVert De^{\varepsilon\Delta} \tilde X^h_r \rVert_2^2 \right] dr \leq    c_{1,\lambda}  t + \oh_h(1),
	\end{equation}
	with $\lim_{h \searrow 0} \oh_h(1)=0$ (the rate possibly depending on $\varepsilon$). 
Noticing that the function $z \mapsto D [e^{\varepsilon \Delta}  z]$ is continuous from $H^{-1}_{\rm sym}({\mathbb S})$ into $L^2_{\rm sym}({\mathbb S})$, we can easily 
take some weak limit as in the statement (as $h$ tends to $0$). We get 
\begin{equation}\label{eq zeroYschemeRewrite3}
	\begin{split}
		\E \bigl[\lVert  e^{\varepsilon \Delta} X_t \rVert^2_2\bigr] - \E \bigl[\lVert X_0 \rVert_2^2 \bigr]     +  2\int_0^t \E\left[    \lVert De^{\varepsilon\Delta} X_r \rVert_2^2 \right] dr \leq & c_{1,\lambda} t. 
	\end{split}
\end{equation}
Since $c_{1,\lambda}$ is independent of $\varepsilon$, this establishes   ${\mathbb E} \int_0^T \| D X_t \|^2_2 dt < \infty$. 
 \vskip 5pt
 
\noindent  \textit{Second step.}
Next, return to equation \eqref{eq RSHE}, with $u \in H^2_{\rm sym}({\mathbb S})$ replaced by $e^{\varepsilon\Delta}u$,
\begin{equation}\label{eq zeroYsecondPart2}
 	\begin{split}
 		\langle X_t-{X}_s ,e^{\varepsilon\Delta}u \rangle &  - \int_s^t \langle  {X}_s  ,\Delta e^{\varepsilon\Delta}u \rangle ds   =  \langle W_t-W_s  ,e^{\varepsilon\Delta}u \rangle +  \langle \eta_t-\eta_s, e^{\varepsilon\Delta}u \rangle.  
 	\end{split} 
 \end{equation}
The next step is to choose $u=e_m$ and then to apply Itô's formula
in order to expand $(\langle X_t,e^{\varepsilon\Delta} e_m \rangle^2)_{t \geq 0}$. 
To do so, it is worth recalling from 
\eqref{eq fouriersplit2}
 that 
{since $e_m=e^+_m-e^-_m$ with $e_m^{\pm} \in U^2({\mathbb S})$}, the process 
$(\langle \eta_t , e^{\varepsilon\Delta}e_m \rangle)_{t \geq 0}$ may be written as the difference of two non-decreasing processes and consequently has finite variation. 
Therefore, due to Itô's formula,  
 \begin{equation}\label{eq zeroYsecondPartIto2}
	\begin{split}
		d\langle X_t ,e^{\varepsilon\Delta}e_m \rangle^2 = &2  \langle e^{\varepsilon\Delta}X_t ,e_m \rangle\langle \Delta e^{\varepsilon\Delta}  {X}_t  ,  e_m \rangle dt + 2 \langle X_t ,e^{\varepsilon\Delta}e_m \rangle d  \langle \eta_t , e^{\varepsilon\Delta}e_m \rangle
		\\
		&\hspace{15pt} 
		 +  2 \langle X_t ,e^{\varepsilon\Delta}e_m \rangle d  \langle W_t  ,e^{\varepsilon\Delta}e_m \rangle
		 +    d \left[     \langle W_\cdot  ,e^{\varepsilon\Delta}e_m \rangle  \right]_t,
	\end{split} 
\end{equation}
where, as before, the symbol $[ \cdot ]_t$ is used to denote the bracket. Integrating over $[0,t]$, applying expectation, summing over $m \in {\mathbb N}_0$ and then integrating by parts,
\begin{equation}
   	\label{eq ineqEps1}
\begin{split}
		& \E \bigl[ \lVert e^{\varepsilon\Delta}X_t   \rVert_2^2 \bigr] + 2  \int_0^t \E\bigl[ \lVert De^{\varepsilon\Delta}X_r  \rVert_2^2 \bigr] dr  \\
		&=  \E \bigl[ \lVert e^{\varepsilon\Delta}X_0   \rVert_2^2 \bigr]
		 +2\E \biggl[ \sum_{m\in\bN_0} \int_0^t \langle e^{\varepsilon\Delta}X_r ,e_m \rangle d \langle \eta_r , e^{\varepsilon\Delta}e_m \rangle  \biggr]  
		   +  \E  \bigl[ \lVert e^{\varepsilon\Delta}W_t    \rVert_2^2 \bigr]
		   \\
		   &= {  \E \bigl[ \lVert e^{\varepsilon\Delta}X_0   \rVert_2^2 \bigr]   +  \E  \bigl[ \lVert e^{\varepsilon\Delta}W_t    \rVert_2^2 \bigr]   +2\E \left[ \int_0^t   e^{2\varepsilon\Delta}X_r \cdot  d \eta_r    \right],}  \\  
  	\end{split}
  \end{equation}  
  where we used Definition \ref{def:integral:RS} and Remark \ref{rem:RS:2epsilon} to get the last line. Combining with the inequality \eqref{eq zeroYschemeRewrite3}, 
recalling 
\eqref{eq:lem unif est noise:1}
 and passing to the limit as $\varepsilon\rightarrow 0$, this implies that 
\begin{equation*}
	\begin{split}
	\lim_{\varepsilon\searrow0} \E \left[ \int_0^t   e^{2\varepsilon\Delta}X_r \cdot  d \eta_r    \right] &=
	  	 	\E \bigl[ \lVert  X_t   \rVert_2^2 \bigr]+   2  \int_0^t \E\bigl[ \lVert D X_r  \rVert_2^2\bigr] dr -  \E \bigl[ \lVert  X_0   \rVert_2^2 \bigr]   -  \E  \bigl[ \lVert  W_t    \rVert_2^2 \bigr]
	\\
	&\leq c_{1,\lambda} t - \E  \bigl[ \lVert  W_t    \rVert_2^2 \bigr]
=0	,
	\end{split}
\end{equation*}  
which completes the proof of Proposition \ref{prop:reflection:ortho} (recall that 
$ \int_s^t  e^{\varepsilon\Delta}X_r   \cdot d  \eta_r \leq 
 \int_0^t  e^{\varepsilon\Delta}X_r   \cdot d  \eta_r$ for
 $s \in [0,t]$).  
 \vskip 5pt 

\noindent 
 \textit{Third step.}
 We now prove the first part of 
  Corollary \ref{cor:reflection:ortho}
  (time-continuity of the trajectories of $(X_t)_{t \geq 0}$ with respect to $\| \cdot \|_2$). 
  To do so, we come back to 
  \eqref{eq ineqEps1}, but without expectation. We have
  \begin{equation}
\label{eq zeroYsecondPartIto322}
\begin{split}
		&  \lVert e^{\varepsilon\Delta}X_t   \rVert_2^2   + 2  \int_0^t  \lVert De^{\varepsilon\Delta}X_r  \rVert_2^2  dr  
		\\
		&=    \lVert e^{\varepsilon\Delta}X_0   \rVert_2^2  
		 + \sum_{m \in {\mathbb N}_0} \lambda_m^2 e^{- 8 \pi^2 m^2 \varepsilon } t + 2
		 \int_0^t  e^{2\varepsilon\Delta}X_r \cdot d W_r  +2  \int_0^t   e^{2\varepsilon\Delta}X_r \cdot  d \eta_r. 
			\end{split}
  \end{equation}  
Fix $T>0$ as in the first step and assume that $t \in [0,T]$. Writing the same identity as above but at another time 
$s \in [0,T]$, for $s  \leq t $, we obtain 
  \begin{equation*}
\begin{split}
\label{eq zeroYsecondPartIto318}
		  \Bigl\vert \lVert e^{\varepsilon\Delta}X_t   \rVert_2^2  
		- \lVert e^{\varepsilon\Delta}X_s   \rVert_2^2    
		\Bigr\vert
		&\leq  2  \int_s^t  \lVert D X_r  \rVert_2^2  dr  
		 + \sum_{m \in {\mathbb N}_0} \lambda_m^2 e^{-8 \pi^2 m^2 \varepsilon } (t-s)
		\\
		&\hspace{15pt}  + 2
		\biggl\vert  \int_s^t  e^{2\varepsilon\Delta}X_r \cdot d W_r  \biggr\vert +2  \int_s^t   e^{2\varepsilon\Delta}X_r \cdot  d \eta_r. 
			\end{split}
  \end{equation*}  
Upper bounding $\int_s^t e^{2 \varepsilon \Delta} X_r \cdot d \eta_r$ by 
$\int_0^T e^{2 \varepsilon \Delta} X_r \cdot d \eta_r$, we observe from the second step that 
$\sup_{0 \leq s \leq t \leq T} \vert \int_s^t e^{2 \varepsilon \Delta} X_r \cdot  d \eta_r \vert$ tends to $0$ in probability. 
Similarly, we deduce from Doob's inequality that 
\begin{equation*} 
\sup_{0 \leq s \leq t \leq T} \biggl\vert 
\int_s^t  e^{2\varepsilon\Delta}X_r \cdot d W_r
-
\int_s^t   X_r \cdot d W_r
\biggr\vert
\end{equation*}
tends to $0$ in probability. Therefore, by extracting a subsequence $(\varepsilon_n)_{n \geq 0}$ that tends to $0$, 
we can easily pass to the limit in the right-hand 
side of 
\eqref{eq zeroYsecondPartIto318}, with probability 1, for all $0 \leq s \leq t \leq T$. 
As for the left-hand side, we can consider an event of probability 1, on which $\sup_{0 \le r \le T} 
\| X_r \|_2 < \infty$ (courtesy of Proposition 
\ref{prop:weak:limit:2}). On this event, we can pass to the limit for all $s,t \in [0,T]$ in the left-hand side of 
\eqref{eq zeroYsecondPartIto318}. 

We deduce that, ${\mathbb P}$-almost surely, for all $s,t \in [0,T]$ with $s \leq t$, 
\begin{equation*} 
  \Bigl\vert \lVert  X_t   \rVert_2^2  
		- \lVert  X_s   \rVert_2^2    
		\Bigr\vert
		\leq  2  \int_s^t  \lVert D X_r  \rVert_2^2  dr  
		 +  c_{1,\lambda} (t-s)
		   + 2
		\biggl\vert  \int_s^t   X_r \cdot d W_r  \biggr\vert. 
\end{equation*}
By the second step, with probability 1, the right-hand side
tends to $0$ as $t-s$ tends to $0$. This proves that there exists an event of probability 1 on 
which the trajectory $t \in [0,T] \mapsto \| X_t \|_2^2$ is continuous. 
Recalling that the trajectory $t \in [0,T] \mapsto X_t$ is already known to be continuous with 
respect to $\| \cdot \|_{2,-1}$, we easily deduce that it is continuous 
with respect to $\| \cdot \|_2$. 
By 
\eqref{eq RSHE}, we deduce that the trajectory $t \in [0,T] \mapsto \eta_t$ is continuous with
respect to $\| \cdot \|_{2,-2}$. 
\qed
\end{proof}

\subsection{Definition and uniqueness of solutions to the rearranged SHE}
\label{subse:3:4} 
 
{We now define a solution to the rearranged SHE studied in this paper.}

\begin{definition}
\label{def:existence}
On a given (filtered) probability space $(\Omega,{\mathcal A},{\mathbb F},{\mathbb P})$ equipped with 
a $Q$-Brownian motion $(W_t)_{t \geq 0}$ with values in $L^2_{\rm sym}({\mathbb S})$
(with respect to the filtration ${\mathbb F}$) and with 
an ${\mathcal F}_0$-measurable initial condition $X_0$ with values in $
U^2({\mathbb S})$ (see 
\eqref{def:1.2:U2}) and
 with finite moments 
of any order (see \eqref{eq:assumption:X0}), we say that a pair of processes  
$(X_t,\eta_t)_{t \geq 0}$ solves the 
rearranged SHE 
\eqref{eq reflSHE}
driven by 
$(W_t)_{t \geq 0}$ and $X_0$ if 
\begin{enumerate}
\item $(X_t)_{t \geq 0}$ is a continuous ${\mathbb F}$-adapted process with values in $
U^2({\mathbb S})$;
\item $(\eta_t)_{t \geq 0}$ is a continuous ${\mathbb F}$-adapted process with values in $H^{-2}_{\rm sym}({\mathbb S})$, starting from $0$ at $0$, such that, with probability 1, for any $u \in H^2_{\rm sym}({\mathbb S})$ that is non-increasing, the path
$(\langle \eta_t,u \rangle)_{t \geq 0}$ is non-decreasing;
\item with probability 1, for any $u \in H^{2}_{\rm sym}({\mathbb S})$, 
\begin{equation}\label{def RSHE 1}
	\begin{split}
		\forall t \geq 0,\quad 
		\langle  X_t,u \rangle    =     \int_0^t \langle   {X}_r  ,\Delta     u \rangle dr +\langle W_t  ,u \rangle + \langle \eta_t ,u\rangle.
\end{split} 
\end{equation}
\item for any $t \geq 0$, 
\begin{equation}
	\label{def RSHE 2}
	\lim_{\varepsilon\searrow0}\mathbb{E}\left[ \int_0^t  e^{\varepsilon\Delta}X_r   \cdot d  \eta_r \right]= 0.
\end{equation}
\end{enumerate}
\end{definition}
Of course, the definition of the integral in \eqref{def RSHE 2} is understood as in Definition 
\ref{def:integral:RS}.

{We now address pathwise uniqueness to the rearranged SHE.} 

\begin{proposition}
\label{prop:strong:!}
Given 
$(\Omega,{\mathcal A},{\mathbb F},{\mathbb P})$ equipped with 
a $Q$-Brownian motion $(W_t)_{t \geq 0}$ with values in $L^2_{\rm sym}({\mathbb S})$
(with respect to the filtration ${\mathbb F}$) and with 
an ${\mathcal F}_0$-measurable initial condition $X_0$ with values in $U^2({\mathbb S})$
and with finite moments of any order,
there exists at most one solution $(X_t,\eta_t)_{t \geq 0}$ to 
 \eqref{eq reflSHE} that satisfies 
Definition 
\ref{def:existence}.
\end{proposition}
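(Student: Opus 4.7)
The strategy I would take is to consider two solutions $(X^1,\eta^1)$ and $(X^2,\eta^2)$ built on the same filtered probability space and driven by the same $W$ and $X_0$, and to introduce $Z_t := X^1_t - X^2_t$. Subtracting the two tested equations \eqref{def RSHE 1} cancels both the noise and the initial value, leaving, for any $u \in H^2_{\rm sym}(\bS)$,
\begin{equation*}
\langle Z_t, u\rangle = \int_0^t \langle Z_r, \Delta u\rangle\,dr + \langle \eta^1_t - \eta^2_t, u\rangle.
\end{equation*}
My target is to show that $\bE[\|Z_t\|_2^2]=0$ by testing against $u = e^{\varepsilon \Delta}e_m$, squaring, summing over the Fourier modes, and then exploiting the orthogonality condition \eqref{def RSHE 2} together with the non-negativity provided by Corollary \ref{cor:RS:3}.

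For the first step I would fix $\varphi_{\varepsilon,m}:=e^{\varepsilon \Delta} e_m$. Since $e_m=e_m^+ - e_m^-$ is a difference of symmetric non-increasing functions (see \eqref{eq fouriersplit}) and the heat semigroup preserves this property (Lemma \ref{lem:heatpreservessymmetry}), the path $t\mapsto \langle \eta^i_t, \varphi_{\varepsilon,m}\rangle$ has finite variation for each $i$. Consequently $t\mapsto \langle Z_t, \varphi_{\varepsilon,m}\rangle$ is of finite variation, and the pathwise chain rule $df^2 = 2f\,df$ delivers
\begin{equation*}
d\langle Z_t, \varphi_{\varepsilon,m}\rangle^2 = -8\pi^2 m^2 e^{-8\pi^2 m^2\varepsilon}\langle Z_t,e_m\rangle^2\,dt + 2\langle Z_t,\varphi_{\varepsilon,m}\rangle\,d\langle \eta^1_t-\eta^2_t,\varphi_{\varepsilon,m}\rangle.
\end{equation*}
Summing over $m \in \bN_0$, using Parseval, and recognising the reflection integral via Definition \ref{def:integral:RS} and Remark \ref{rem:RS:2epsilon}, one arrives at
\begin{equation*}
\|e^{\varepsilon\Delta}Z_t\|_2^2 + 2\int_0^t \|De^{\varepsilon \Delta}Z_r\|_2^2\,dr = 2\int_0^t e^{2\varepsilon\Delta}Z_r\cdot d(\eta^1_r-\eta^2_r).
\end{equation*}

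For the second step I would split the reflection integral by bilinearity:
\begin{equation*}
\int_0^t e^{2\varepsilon\Delta}Z_r\cdot d(\eta^1_r-\eta^2_r) = \sum_{i=1,2}\int_0^t e^{2\varepsilon\Delta}X^i_r\cdot d\eta^i_r - \sum_{i\neq j}\int_0^t e^{2\varepsilon\Delta}X^i_r\cdot d\eta^j_r.
\end{equation*}
Each off-diagonal integral is almost surely non-negative by Corollary \ref{cor:RS:3}, since $e^{\varepsilon\Delta}X^i_r$ lies in $U^2(\bS)$ (Lemma \ref{lem:heatpreservessymmetry}) and $\eta^j$ satisfies (E2). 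Transferring those terms to the left-hand side, dropping the non-negative gradient contribution, and taking expectations yields
\begin{equation*}
\bE\bigl[\|e^{\varepsilon\Delta}Z_t\|_2^2\bigr] \leq 2\sum_{i=1,2}\bE\biggl[\int_0^t e^{2\varepsilon\Delta}X^i_r\cdot d\eta^i_r\biggr].
\end{equation*}
Letting $\varepsilon\searrow 0$, the right-hand side vanishes by the orthogonality condition \eqref{def RSHE 2}, while Fatou's lemma, together with the strong continuity $e^{\varepsilon\Delta}Z_t\to Z_t$ in $L^2(\bS)$, gives $\bE[\|Z_t\|_2^2]\leq \liminf_{\varepsilon\searrow 0}\bE[\|e^{\varepsilon\Delta}Z_t\|_2^2]\leq 0$. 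Hence $Z_t=0$ almost surely; by path continuity $X^1=X^2$, and then from \eqref{def RSHE 1} also $\eta^1=\eta^2$.

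The main obstacle I anticipate is validating the mode-wise chain rule of Step 1 in the absence of any semimartingale structure for $\eta^i$, and making sure that the resulting bilinear decomposition is consistent with the tailor-made integral of Definition \ref{def:integral:RS}. Both difficulties are defused by the same device, the smoothed test functions $\varphi_{\varepsilon,m}=e^{\varepsilon\Delta}e_m$: smoothing turns each $\eta^i$ into a finite-variation path mode-by-mode, exposes the cross terms as non-negative Riemann--Stieltjes integrals (Corollary \ref{cor:RS:3}), and defers all the analytic work to the diagonal contributions where orthogonality \eqref{def RSHE 2} takes over in the limit $\varepsilon\to 0$.
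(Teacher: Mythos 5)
Your proposal is correct and follows essentially the same route as the paper's proof: a Fourier-mode energy identity for $e^{\varepsilon\Delta}(X^1-X^2)$ (the paper phrases it via Itô's formula, which here reduces to your pathwise chain rule since the noise cancels), discarding the non-negative cross integrals $\int_0^t e^{2\varepsilon\Delta}X^i_r\cdot d\eta^j_r$ via Corollary \ref{cor:RS:3}, and then letting $\varepsilon\searrow 0$ so that the diagonal terms vanish by the orthogonality condition \eqref{def RSHE 2}. The only cosmetic difference is that you subtract the two tested equations before squaring and spell out the Fatou/semigroup-continuity step, which the paper leaves implicit.
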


\begin{proof}
 Consider two candidate solutions  $(X^1_t,\eta^1_t)_{t \geq 0}$ and $(X^2_t,\eta^2_t)_{t \geq 0}$. 
 Then, by  {Itô's formula}, 
 for any $\varepsilon >0$, 
 $\P$-a.s.,
 for any $m \in {\mathbb N}_0$, 
\begin{align}
\label{eq comparison1}
&d\langle e^{\varepsilon\Delta} (X^1_t-X_t^2), e_m \rangle^2 
 \\
&= 2\langle e^{\varepsilon\Delta} ( X_t^1-X_t^2), e_m \rangle \left[ \langle \Delta e^{\varepsilon\Delta} ( X^1_t-X^2_t), e_m \rangle dt + d \langle \eta^1_t-\eta^2_t, e^{\varepsilon\Delta} e_m \rangle  \right], \quad t \geq 0. \nonumber
\end{align}
{
Summing over $m \in {\mathbb N}_0$ and integrating by parts, one obtains
($\mathbb{P}$-a.s)
\begin{align}
\label{eq comparison2} 
&\lVert e^{\varepsilon\Delta}  (X_t^1-X_t^2) \rVert_2^2 
+ 2   \int_0^t \lVert De^{\varepsilon\Delta}( X^1_r  -X^2_r) \rVert_2^2 dr  = 2 \int_0^t e^{2\varepsilon\Delta}(X^1_r-  {X_r^2} ) \cdot d (\eta^1_r - \eta^2_r) \nonumber \\
&\hspace{15pt} \leq   2 \left( \int_0^t e^{2\varepsilon\Delta} X^1_r  \cdot d \eta^1_r +  \int_0^t e^{2\varepsilon\Delta}  X^2_r   \cdot d \eta^2_r \right), 
\end{align}
}
where we used 
Corollary 
\ref{cor:RS:3} to establish the last inequality. 

Applying expectation and 
setting $\varepsilon\rightarrow 0 $, we obtain 
from 
	\eqref{def RSHE 2} 
that $X_t^1=X_t^2$, $\bP-a.s.$ for any $t\geq 0$, which {shows that $X^1$ are $X^2$  are indistinguishable}. Since $\eta^1$ and $\eta^2$ are defined via  {$X^1$ and $X^2$}, there exists a unique solution to \eqref{eq reflSHE}.
\qed
\end{proof}

{By combining} Propositions 
\ref{prop:weak:limit:1}
and \ref{prop:strong:!} {with Corollary \ref{cor:reflection:ortho}},
we deduce from an obvious adaptation of Yamada-Watanabe argument, {the first main result  of the paper}:
\begin{theorem}
\label{thm:main:existence:uniqueness}
Given 
$(\Omega,{\mathcal A},{\mathbb F},{\mathbb P})$ equipped with 
a $Q$-Brownian motion $(W_t)_{t \geq 0}$ with values in $L^2_{\rm sym}({\mathbb S})$
(with respect to the filtration ${\mathbb F}$) and with 
an ${\mathcal F}_0$-measurable initial condition $X_0$ with values in $L^2_{\rm sym}({\mathbb S})$,
there exists a unique solution $(X_t,\eta_t)_{t \geq 0}$ to the rearranged SHE \eqref{eq reflSHE} that satisfies 
Definition 
\ref{def:existence}.

Moreover, the processes $\{ (\tilde X^h_t,W_t)_{t \geq 0}\}_{h >0}$, as defined in 
\eqref{eq scheme n}, 
are convergent
in law
(over 
$\cC([0,\infty),{H_{\rm sym}^{-1}({\mathbb S}) \times L_{\rm sym}^2({\mathbb S})})$)  equipped with the topology of uniform convergence on compact subsets) and the limit is 
the law of $(X_t,W_t)_{t \geq 0}$. 
\end{theorem}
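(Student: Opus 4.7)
The plan is to combine the weak existence that has been assembled in Sections \ref{subse:3:1}--\ref{subse:3:3} with the strong uniqueness established in Proposition \ref{prop:strong:!} \emph{via} a Yamada--Watanabe type argument, and then to upgrade the subsequential weak limit of the scheme into a full limit in law.

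First, I would verify weak existence on some (sufficiently rich) filtered probability space. By Proposition \ref{prop:tightness}, the family $\{(\tilde X^h,W)\}_{h \in (0,1)}$ is tight on $\cC([0,\infty),L^2_{\rm sym}({\mathbb S}))^2$, so Prokhorov's theorem supplies a subsequential weak limit $(X,W)$. By Skorokhod's representation theorem, this limit can be realised on a probability space carrying a filtration ${\mathbb F}$ with respect to which $(W_t)_{t \geq 0}$ remains a $Q$-Brownian motion with values in $L^2_{\rm sym}({\mathbb S})$. Proposition \ref{prop:weak:limit:1} then provides the reflection process $(\eta_t)_{t \geq 0}$ and verifies items 1, 2 and 3 of Definition \ref{def:existence}, while Proposition \ref{prop:reflection:ortho} verifies item 4. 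Hence $(X,\eta)$ is a solution in the sense of Definition \ref{def:existence}.

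Second, the Yamada--Watanabe argument is to be invoked to deduce strong existence on an \emph{arbitrary} filtered probability space carrying a prescribed $Q$-Brownian motion and an initial condition $X_0 \in U^2({\mathbb S})$ of finite moments of all orders. The argument is standard: combine the weak existence above with the pathwise uniqueness of Proposition \ref{prop:strong:!} to conclude that the unique weak solution is in fact a measurable functional of $(X_0,W)$, and therefore can be reconstructed on the given probability space. The only point that deserves attention is that the solution pair $(X,\eta)$ lives in the product space $\cC([0,\infty),L^2_{\rm sym}({\mathbb S})) \times \cC([0,\infty),H^{-2}_{\rm sym}({\mathbb S}))$, both Polish, and that $\eta$ is measurably determined by $X$ and $W$ through the tested identity in item 3 of Definition \ref{def:existence} (equivalently through Remark \ref{rem:weak:limit:1}). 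Once this is noted, the classical Yamada--Watanabe machinery (in the form of Kurtz's joint measurability criterion, for instance) applies verbatim.

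Third, I would address the convergence in law of the full family $\{(\tilde X^h,W)\}_{h\in(0,1)}$. Tightness is granted by Proposition \ref{prop:tightness}. By the first step, every subsequential limit is, up to a relabelling of the filtration, a solution to \eqref{eq reflSHE} in the sense of Definition \ref{def:existence} with the same initial condition $X_0$ and driving noise law. By Proposition \ref{prop:strong:!}, such solutions are pathwise unique, which in particular forces uniqueness in law of $(X,W)$. Consequently every subsequential weak limit has the same law and the full family $\{(\tilde X^h,W)\}_{h\in(0,1)}$ converges in law to that common limit as $h \searrow 0$. The main subtlety here lies in verifying that the Yamada--Watanabe reduction applies despite the reflection term: this boils down to the fact that, on any given stochastic basis, the whole pair $(X,\eta)$ is characterised by $(X_0,W)$ alone, which is precisely what Definition \ref{def:existence} (together with Remark \ref{rem:weak:limit:1}) ensures.
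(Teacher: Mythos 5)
Your proposal is correct and follows essentially the same route as the paper: weak existence is extracted from the tightness of the scheme together with Propositions \ref{prop:weak:limit:1} and \ref{prop:reflection:ortho} (which verify items 1--4 of Definition \ref{def:existence}), and strong existence, uniqueness in law and convergence of the whole family then follow from the pathwise uniqueness of Proposition \ref{prop:strong:!} via the Yamada--Watanabe argument, exactly as the paper indicates. Your added remarks (that $\eta$ is measurably determined by $X$ and $W$, and that subsequential limits all share the same law) are just a more explicit spelling-out of the paper's one-line appeal to an ``obvious adaptation'' of Yamada--Watanabe.
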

Notice that, as in 
Proposition \ref{prop:strong:!}, {we use} the same noise 
for the scheme and for the limiting equation. 
However, in contrast to
Proposition \ref{prop:strong:!}, 
there is no abuse in doing so: the first part of the statement allows us to construct the solution to 
the rearranged equation on the same filtered probability space (equipped with the same noise) as the scheme. It shall prove useful to note that item 4 in 
Definition \ref{def:existence} may be strengthened:
\begin{proposition}
\label{prop:item:iv:higher:moments}
Let 
$(X_t,\eta_t)_{t \geq 0}$ satisfy 
 Definition \ref{def:existence}
 except item 4 therein. 
 Then, item 4  holds true if and only if one of the following two properties below 
 is satisfied: 
 \begin{itemize}
\item[4'] For any $t >0$, there exists a sequence of positive reals $(\varepsilon_q)_{q \geq 1}$, with $0$ as limit, 
such that, in ${\mathbb P}$-probability, 
$	\lim_{q \rightarrow \infty}  \int_0^t  e^{\varepsilon_q \Delta}X_r   \cdot d  \eta_r = 0$.
 \item[4''] It holds that, for any $p \geq 1$, for any $t>0$, $
	\lim_{\varepsilon\searrow0}\mathbb{E}[ ( \int_0^t  e^{\varepsilon\Delta}X_r   \cdot d  \eta_r )^p ]= 0.$
	\end{itemize} 
\end{proposition}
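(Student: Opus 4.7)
The plan is to prove the chain $4 \Leftrightarrow 4' \Leftrightarrow 4''$, exploiting the non-negativity of
$A_t^\varepsilon := \int_0^t e^{\varepsilon\Delta}X_r\cdot d\eta_r$,
which follows from Corollary~\ref{cor:RS:3} together with the fact that each $X_r$ takes values in $U^2({\mathbb S})$ (and that $e^{\varepsilon\Delta}$ preserves the symmetric non-increasing property, by Lemma~\ref{lem:heatpreservessymmetry}). Two of the implications are immediate: $4''\Rightarrow 4$ by specialising to $p=1$, and $4\Rightarrow 4'$ because $L^1$ convergence implies convergence in probability along every sequence $\varepsilon_q\downarrow 0$.

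For the remaining two implications I first observe that the limit $L:=\lim_{\varepsilon\downarrow 0}\mathbb{E}[A_t^\varepsilon]$ exists in $[0,\infty)$. Indeed, identity~\eqref{eq ineqEps1} from the proof of Proposition~\ref{prop:reflection:ortho} expresses $2\mathbb{E}[A_t^{2\varepsilon}]$ as a combination of four terms, each converging as $\varepsilon\downarrow 0$ (the derivative integral by monotone convergence, with finite limit by Corollary~\ref{cor:reflection:ortho}, and the others by dominated convergence). Both $4'\Rightarrow 4$ and $4\Rightarrow 4''$ then rest on the uniform bound $\sup_{\varepsilon\in(0,1]}\mathbb{E}[(A_t^\varepsilon)^p]\leq C_{p,t}$ for every $p\geq 1$. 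Under this bound, $4'\Rightarrow 4$ follows because uniform integrability of $\{A_t^{\varepsilon_q}\}_q$ promotes the convergence in probability of $4'$ to $L^1$ convergence, forcing $L=0$; and $4\Rightarrow 4''$ follows via the interpolation inequality $\mathbb{E}[(A_t^\varepsilon)^p]\leq\mathbb{E}[A_t^\varepsilon]^{p/(2p-1)}\mathbb{E}[(A_t^\varepsilon)^{2p}]^{(p-1)/(2p-1)}$.

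The heart of the argument, and the step where I expect the main difficulty, is thus the uniform $L^p$ bound. My approach is to apply It\^o's formula to $\|e^{\varepsilon/2\Delta}X_t\|_2^2$ via item~3 of Definition~\ref{def:existence}, obtaining the pathwise decomposition
\begin{equation*}
2A_t^\varepsilon = \|e^{\varepsilon/2\Delta}X_t\|_2^2 - \|e^{\varepsilon/2\Delta}X_0\|_2^2 + 2\int_0^t\|De^{\varepsilon/2\Delta}X_s\|_2^2\,ds - 2\tilde{M}_t^{\varepsilon/2} - [\tilde{W}^{\varepsilon/2}]_t,
\end{equation*}
where $\tilde M^{\varepsilon/2}$ is a continuous martingale with quadratic variation dominated by $c_\lambda\int_0^t\|X_r\|_2^2\,dr$ and $[\tilde W^{\varepsilon/2}]_t\leq c_\lambda t$ is deterministic. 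Dropping the negative contributions and using the contractivity of the heat semigroup yields the pathwise estimate
\begin{equation*}
0\leq A_t^\varepsilon\leq \tfrac12\|X_t\|_2^2 + \int_0^t\|DX_s\|_2^2\,ds + |\tilde{M}_t^{\varepsilon/2}|.
\end{equation*}
Proposition~\ref{prop:weak:limit:2} controls $\mathbb{E}[\|X_t\|_2^{2p}]$ uniformly, and the Burkholder--Davis--Gundy inequality together with Proposition~\ref{prop:weak:limit:2} handles $\mathbb{E}[|\tilde M_t^{\varepsilon/2}|^p]$. The genuinely delicate piece is $\mathbb{E}[(\int_0^t\|DX_s\|_2^2\,ds)^p]$: to bound it uniformly in $\varepsilon$, I would revisit the scheme-level analysis underpinning Corollary~\ref{cor:reflection:ortho}, applying It\^o to $\|V_s^{h,n}\|_2^{2p}$ on each flat heat-equation step and exploiting the preservation $\|X_{n+1}^h\|_2=\|\hat X_{n+1}^h\|_2$ under rearrangement (Lemma~\ref{lem:isometry}) so that the boundary terms telescope, then passing to the limit along the tight schemes of Proposition~\ref{prop:tightness}.
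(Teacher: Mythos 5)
Your skeleton (two trivial implications, then reduce the substantive ones to a uniform $L^p$ bound on $A_t^\varepsilon:=\int_0^t e^{\varepsilon\Delta}X_r\cdot d\eta_r$ together with the existence of $\lim_{\varepsilon\searrow 0}\mathbb{E}[A_t^\varepsilon]$, and finish by uniform integrability/interpolation) is the right one, and those final mechanics are fine. The genuine gap is in how you obtain the two unconditional ingredients. You invoke Corollary \ref{cor:reflection:ortho} and Proposition \ref{prop:weak:limit:2}, and you propose to bound $\mathbb{E}[(\int_0^t\lVert DX_s\rVert_2^2\,ds)^p]$ by \emph{revisiting the scheme-level analysis and passing to the limit along the tight schemes} of Proposition \ref{prop:tightness}. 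All of those statements are proved for the weak limits of the scheme, whereas the pair $(X,\eta)$ in Proposition \ref{prop:item:iv:higher:moments} is an \emph{arbitrary} process satisfying only items 1--3 of Definition \ref{def:existence}: nothing identifies it with a scheme limit, because that identification (via strong uniqueness, Theorem \ref{thm:main:existence:uniqueness}) requires item 4 --- precisely the property being characterised. This is not a technicality: the proposition is needed in Section 4 exactly for the shifted/tilted processes, which are not constructed from the scheme. Consequently $\mathbb{E}\int_0^t\lVert DX_s\rVert_2^2\,ds<\infty$ --- on which both your finiteness of $L$ and your pathwise bound $0\le A_t^\varepsilon\le \tfrac12\lVert X_t\rVert_2^2+\int_0^t\lVert DX_s\rVert_2^2\,ds+|\tilde M_t^{\varepsilon/2}|$ rest --- is not available a priori; and since the derivative integral enters the expression for $A_t^\varepsilon$ with a \emph{positive} sign, no unconditional uniform bound on $A_t^\varepsilon$ can be extracted from items 1--3 alone.

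The repair (and the paper's route) is to draw this control from the hypothesis itself rather than from the scheme. Assuming 4', write the pathwise energy identity \eqref{eq:proof:item:iv:higher:moments} with $\varepsilon=\varepsilon_q/2$ and let $q\to\infty$ in probability: the reflection term vanishes by 4', the derivative integrals converge monotonically by contractivity of the heat semigroup, and one is left with a pathwise identity showing that $\sup_{\varepsilon>0}\int_0^t\lVert De^{\varepsilon\Delta}X_r\rVert_2^2\,dr$ has moments of every order (moments of $\lVert X_t\rVert_2$ being obtained from this same identity by BDG and Gronwall, rather than from Proposition \ref{prop:weak:limit:2}). Feeding this back into \eqref{eq:proof:item:iv:higher:moments} yields $\sup_{\varepsilon>0}\mathbb{E}[(A_t^\varepsilon)^p]<\infty$, and taking expectations in the same identity shows that $\lim_{\varepsilon\searrow 0}\mathbb{E}[A_t^\varepsilon]$ exists, hence equals the limit $0$ obtained along $(\varepsilon_q)_{q\ge 1}$. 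With the bound established under 4', your uniform-integrability and interpolation steps go through verbatim; for the implication from 4 to 4'' note that 4 implies 4', so the same bound is available there as well.
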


\begin{proof}
We proceed as follows. 
We consider a process
$(X_t,\eta_t)_{t \geq 0}$ satisfying 
 Definition \ref{def:existence}
 except item \textit{4} therein. Obviously \textit{4''} implies \textit{4}, which implies in turn \textit{4'}. The only difficulty is to prove that \textit{4'} implies 
 \textit{4''}. 

Assuming  
\textit{4'}, 
we recall that, by construction, the argument inside the power function in \textit{4''}  is non-negative. 
We then show that, for any $t \geq 0$, 
\begin{equation}
\label{eq:bound:suppl:item4''}
\forall p \geq 1, \quad 
\sup_{\varepsilon >0}
\mathbb{E}\left[ \biggl( \int_0^t  e^{\varepsilon\Delta}X_r   \cdot d  \eta_r \biggr)^p \right] < \infty. 
\end{equation}
We restart 
from 
\eqref{eq zeroYsecondPartIto2}
and we follow the derivation {of}   \eqref{eq ineqEps1}, but without taking the expectation therein. 
For a given $\varepsilon >0$, we 
restart from 
\eqref{eq zeroYsecondPartIto322}
(which holds true under items 1, 2 and 
3 in Definition 
\ref{def:existence}, even though item 4 is not known yet).
Recall from the contractivity of the heat semigroup 
that the function 
\begin{equation*}
\varepsilon \in (0,\infty) \mapsto  \int_0^t   \lVert De^{\varepsilon \Delta}X_r  \rVert_2^2 
 dr
 \end{equation*}
 is non-increasing (for any given realisation). 
{Choosing} $\varepsilon = \varepsilon_q/2$ for some $q \in {\mathbb N}_0$ in 
\eqref{eq zeroYsecondPartIto322}
and
{taking} the limit (in probability) as $q$ tends to $\infty$, we deduce 
from   \textit{4'} that
\begin{equation*}
\begin{split}
 \lVert X_t   \rVert_2^2 
 +    2\sup_{\varepsilon >0}  \int_0^t   \lVert e^{\varepsilon \Delta} D X_r  \rVert_2^2 
 dr =
  \lVert X_0   \rVert_2^2 
     +  \lVert W_t    \rVert_2^2  
+ 2 
 \int_0^t X_s \cdot d  W_s .
 \end{split}
\end{equation*}
{By taking power $p$ and recalling
	\eqref{eq:lem unif est noise:1} 
	and Proposition 
	\ref{prop:weak:limit:2}
together with the fact that $\| X_t \|_2$ has finite moments of any order for any $t$ (see  Proposition \ref{prop:tightness}), 
we  deduce  that}
\begin{equation}
\label{eq:gronwall:lp}
{\mathbb E} \biggl[ \biggl( \sup_{\varepsilon >0} \int_0^t   \lVert D e^{\varepsilon \Delta}  X_r  \rVert_2^2
\biggr)^p \biggr]
< \infty.
\end{equation}
{Back to \eqref{eq zeroYsecondPartIto322}, we can express 
$\int_0^t   e^{2\varepsilon\Delta}X_r \cdot  d \eta_r$
in terms of all the other terms. Using the Burkolder-Davis-Gundy inequality to handle the 
stochastic integral,   
\eqref{eq:bound:suppl:item4''}
 follows.} 

Now that we have 
\eqref{eq:bound:suppl:item4''}, 
it suffices to prove item \textit{4''} with $p=1$. 
The result for $p>1$ then follows from a standard uniform integrability argument. 
In fact, by combining
\eqref{eq:bound:suppl:item4''}
and 
 item \textit{4'}, we already know that, for any $t >0$, 
 \begin{equation}
 \label{eq:proof:4'':existence:lim:subseq}
 \lim_{q \rightarrow \infty} 
{\mathbb E} \left[ \int_0^t  e^{\varepsilon_q \Delta}X_r   \cdot d  \eta_r \right] = 0. 
 \end{equation}
 It remains to observe that the limit 
 $\lim_{\varepsilon \searrow 0} 
{\mathbb E}  \int_0^t  e^{\varepsilon  \Delta}X_r   \cdot d  \eta_r$
 exists necessarily. 
   Indeed, 
 taking expectation in 
 \eqref{eq zeroYsecondPartIto322}, we observe that
 \begin{equation*}
\begin{split}
 \lim_{\varepsilon \searrow 0} 
{\mathbb E}  \left[ \int_0^t  e^{\varepsilon  \Delta}X_r   \cdot d  \eta_r \right]
& =  {\mathbb E} 
 \bigl[ \lVert X_t   \rVert_2^2 \bigr]
 +     2 \sup_{\varepsilon >0} {\mathbb E} \left[  \int_0^t   \lVert De^{\varepsilon\Delta}X_r  \rVert_2^2 
 dr \right] 
 \\
&\hspace{15pt}  -
{\mathbb E} \bigl[  \lVert X_0   \rVert_2^2 \bigr]
     -
     {\mathbb E} \bigl[  \lVert  W_t    \rVert_2^2 \bigr],  
 \end{split}  
  \end{equation*}
  where we used once again the contractivity of the heat semigroup to write 
$  \lim_{\varepsilon \rightarrow 0} 
  {\mathbb E} \left[  \int_0^t   \lVert De^{\varepsilon\Delta}X_r  \rVert_2^2 
 dr\right]
=
  \sup_{\varepsilon >0} 
  {\mathbb E} \left[ \int_0^t   \lVert De^{\varepsilon\Delta}X_r  \rVert_2^2 
 dr\right]$. The proof is complete.  \qed
\end{proof}

\subsection{Lipschitz regularity of the flow}
\label{subse:3:5}
{We conclude this section with the following result, crucial for the rest of the paper. The proof is the same as that of uniqueness; we just need to retain the difference of the initial conditions in the argumentation.}

\begin{proposition}
\label{prop:flow} 
Given 
$(\Omega,{\mathcal A},{\mathbb F},{\mathbb P})$ equipped with 
a $Q$-Brownian motion $(W_t)_{t \geq 0}$ with values in $L^2_{\rm sym}({\mathbb S})$
(with respect to  ${\mathbb F}$),
consider $(X^x,\eta^x)$ and $(X^y,\eta^y)$ the solutions to 
the \eqref{eq reflSHE}  with $X_0^x=x \in U^2({\mathbb S})$ and $X_0^y=y \in U^2({\mathbb S})$ as initial conditions. 
Then,  
\begin{equation}
	\label{eq lip}
	\begin{split}
		&
		{\mathbb P}\textrm{\rm -a.s.},
	\quad
		 \lVert X^{x }_t-X^y_t \rVert_2^2 
		+
		2 \int_0^t \lVert De^{\varepsilon\Delta}( X^x_r-X^y_r) \rVert_2^2 dr
		\leq  \lVert  x-y \rVert_2^2,\quad t \geq 0.
	\end{split}
\end{equation} 
\end{proposition}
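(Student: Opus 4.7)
The plan is to mirror, essentially verbatim, the Itô-type computation performed in the uniqueness proof (Proposition \ref{prop:strong:!}), applied to the difference $Z_t := X_t^x - X_t^y$, and then to exploit the cone structure of $U^2(\mathbb{S})$ together with the orthogonality property of Definition \ref{def:existence}(4) to control the reflection terms. Note first that $Z$ carries no martingale component: the common driving noise $W$ cancels out, so $Z$ solves, tested against $u \in H^2_{\rm sym}(\mathbb{S})$,
\begin{equation*}
\langle Z_t, u\rangle = \langle x-y, u\rangle + \int_0^t \langle Z_r, \Delta u\rangle dr + \langle \eta_t^x - \eta_t^y, u\rangle.
\end{equation*}

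Fix an auxiliary parameter $\tilde\varepsilon > 0$ and apply Itô's formula to $\langle e^{\tilde\varepsilon\Delta} Z_t, e_m\rangle^2$ for each $m \in \mathbb{N}_0$, using the decomposition $e_m = e_m^+ - e_m^-$ of \eqref{eq fouriersplit} to ensure that $(\langle \eta_t^x - \eta_t^y, e^{\tilde\varepsilon\Delta}e_m\rangle)_{t\geq 0}$ is of finite variation. Summing over $m$ and invoking Remark \ref{rem:RS:2epsilon} exactly as in \eqref{eq comparison2}, one obtains, $\mathbb{P}$-a.s. for every $t \geq 0$,
\begin{equation*}
\bigl\|e^{\tilde\varepsilon\Delta} Z_t\bigr\|_2^2 + 2\int_0^t \bigl\|De^{\tilde\varepsilon\Delta} Z_r\bigr\|_2^2\, dr = \bigl\|e^{\tilde\varepsilon\Delta}(x-y)\bigr\|_2^2 + 2\int_0^t e^{2\tilde\varepsilon\Delta}(X_r^x - X_r^y)\cdot d(\eta_r^x - \eta_r^y).
\end{equation*}
The reflection term now splits into four integrals; since both $X_r^x$ and $X_r^y$ take values in the cone $U^2(\mathbb{S})$, Corollary \ref{cor:RS:3} applied to $(X_r^y, \eta_r^x)$ and to $(X_r^x, \eta_r^y)$ shows that
\begin{equation*}
\int_0^t e^{2\tilde\varepsilon\Delta} X_r^y \cdot d\eta_r^x \geq 0, \qquad \int_0^t e^{2\tilde\varepsilon\Delta} X_r^x \cdot d\eta_r^y \geq 0,
\end{equation*}
so these cross terms can be dropped with the right sign, leaving only the two self-reflection integrals $\int_0^t e^{2\tilde\varepsilon\Delta} X_r^x \cdot d\eta_r^x$ and $\int_0^t e^{2\tilde\varepsilon\Delta} X_r^y \cdot d\eta_r^y$ on the right-hand side.

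To bring in the fixed $\varepsilon$ appearing in the statement, I would use the simple monotonicity $\tilde\varepsilon \mapsto \|De^{\tilde\varepsilon\Delta} Z_r\|_2$ is non-increasing (a consequence of writing $De^{\varepsilon\Delta} = e^{(\varepsilon-\tilde\varepsilon)\Delta} De^{\tilde\varepsilon\Delta}$ for $\tilde\varepsilon \leq \varepsilon$ and invoking contractivity of the heat semigroup). Taking $\tilde\varepsilon \leq \varepsilon$, this yields
\begin{equation*}
\bigl\|e^{\tilde\varepsilon\Delta} Z_t\bigr\|_2^2 + 2\int_0^t \bigl\|De^{\varepsilon\Delta} Z_r\bigr\|_2^2\, dr \leq \bigl\|e^{\tilde\varepsilon\Delta}(x-y)\bigr\|_2^2 + 2\int_0^t e^{2\tilde\varepsilon\Delta} X_r^x \cdot d\eta_r^x + 2\int_0^t e^{2\tilde\varepsilon\Delta} X_r^y \cdot d\eta_r^y.
\end{equation*}

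It remains to pass to the limit $\tilde\varepsilon \to 0$. Applying Definition \ref{def:existence}(4) separately to $(X^x,\eta^x)$ and $(X^y,\eta^y)$ (or, more efficiently, Proposition \ref{prop:item:iv:higher:moments}(4')), a diagonal extraction yields a deterministic sequence $\tilde\varepsilon_q \downarrow 0$ along which the two self-reflection integrals converge to $0$, $\mathbb{P}$-a.s., simultaneously. Along this subsequence, strong continuity of the heat semigroup on $L^2(\mathbb{S})$ gives $\|e^{\tilde\varepsilon_q\Delta} Z_t\|_2 \to \|Z_t\|_2$ and $\|e^{\tilde\varepsilon_q\Delta}(x-y)\|_2 \to \|x-y\|_2$, so the desired pathwise inequality follows for each $t \geq 0$. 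Continuity of both sides in $t$ then upgrades this to an inequality holding simultaneously for all $t \geq 0$, $\mathbb{P}$-a.s.

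The main obstacle is not analytical but bookkeeping: ensuring that the Itô formula applied mode by mode can be summed back into the intrinsic integral of Definition \ref{def:integral:RS}, and organising the diagonal extraction so that both self-reflection terms vanish along the same deterministic subsequence. Both points have already been handled in the preparatory material (Subsections \ref{subse:3:2}--\ref{subse:3:3} and Proposition \ref{prop:item:iv:higher:moments}), so the argument is essentially a packaging of the uniqueness proof adapted to two distinct initial conditions.
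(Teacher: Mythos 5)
Your argument is correct and is essentially the paper's own proof: the same Itô expansion as in the uniqueness argument of Proposition \ref{prop:strong:!}, dropping the cross reflection terms with the right sign via Corollary \ref{cor:RS:3}, and sending the self-reflection terms to zero through item 4 of Definition \ref{def:existence} (Fatou's lemma / a subsequence extraction). Your explicit use of the monotonicity of $\tilde\varepsilon \mapsto \lVert D e^{\tilde\varepsilon\Delta} Z_r\rVert_2$ to retain the fixed $\varepsilon$ on the left-hand side just spells out what the paper leaves implicit in its sketch, so no further comment is needed.
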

As an obvious (but very useful) consequence of 
Proposition 
\ref{prop:flow}, we have, for all $T >0$, 
\begin{equation}
\label{eq:reg:flow:eta}
\sup_{0 \leq t \leq T} 
\Bigl\| 
 \eta_t^x - \eta_t^y
 \Bigr\|_{2,-2} \leq c_T \| x - y \|_2,
\end{equation}
for a constant $c_T$ only depending on $T$. 
The proof follows from 
the identity 
\eqref{def RSHE 1}. 

\begin{remark}
\label{rem:flow:existence}
{Inequalities}
	\eqref{eq lip}
	and
	\eqref{eq:reg:flow:eta} say that, for each $x \in L^2_{\rm sym}({\mathbb S})$, we can find versions of $(X_t^x)_{t \geq 0}$ and $(\eta^x_t){_t \geq 0}$ such that, for any $T>0$, the mappings
	$(\omega,t,x) \in \Omega \times [0,T] \times L^2_{\rm sym}({\mathbb S}) \mapsto X_t^x(\omega) 
	\in L^2_{\rm sym}({\mathbb S})$ and 
	$(\omega,t,x) \in \Omega \times [0,T] \times L^2_{\rm sym}({\mathbb S}) \mapsto \eta_t^x(\omega) 
	\in H^{-2}_{\rm sym}({\mathbb S})$ 
are measurable, {continuous in $t$ and Lipschitz in $x$}. 
\end{remark}

\section{Smoothing Effect}
\label{sec:4}


It is demonstrated below that the semigroup -  
$$\{P_t\}_{t\geq 0}\quad \text{with}\quad P_tf(x):= \bE[f(X^x_t)], \quad {t \geq 0}, \ x \in U^2({\mathbb S}),$$
for $f$ within the set of bounded measurable functions on $U^2({\mathbb S})$ - maps 
bounded measurable functions into Lipschitz continuous functions on 
$U^2({\mathbb S})$, at least when the 
parameter $\lambda$ in \eqref{def:tilde:noise}  belongs to $(\nicefrac12,1)$. Importantly, we prove that the rate at which the Lipschitz constant of $P_t f$ blows up as $t$ decreases to $0$ is integrable, 
see Theorem \ref{thm:main:lipschitz} below together with Remark 
\ref{rem:4:13} for possible applications to infinite dimensional PDEs. 

We first consider a finite dimensional reduction of the problem. For a given truncation level $M \in {\mathbb N}_0$
and for any $v$ in $L^2_{\rm sym}(\bS)$, we let
\begin{equation*}
v^{M} :=  \sum_{m =0}^M \langle v,e_m\rangle e_m(\cdot), \quad v^{*,M} := \biggl( \sum_{m =0}^M \langle v,e_m\rangle e_m(\cdot) \biggr)^*. 
\end{equation*}
Clearly, $v^{*,M}$ is an element of $U^2(\bS)$ parametrised by {the (finite) vector of} Fourier modes 
$(\langle v,e_m \rangle)_{m=0,\cdots,N}$. We let 
$E^M := \{ v^{M}, \ v \in L^2_{\rm sym}({\mathbb S})\}$
and 
$E^{*,M} := \{v^{*,M}, \ v \in L^2_{\rm sym}({\mathbb S})\}$. Obviously, $E^M \cong \bR^{M+1}$.
The point is to prove that, for any $t>0$, the mapping 
{
	$x \in E^M \mapsto P_tf (x^{*,M} )$
}
is Lipschitz continuous, with a Lipschitz constant independent of $M$. Reducing the dimensionality allows us to use many tools from finite dimensional analysis, notably 
{Rademacher's theorem}. 
Together with 
Proposition 
\ref{prop:flow}, the latter says
 that, for a given $t>0$, the flow
 {
 	$
x \in E^M \mapsto X_t^{x^*}
%
$
}
is almost everywhere (for the Lebesgue measure on $E^M$) differentiable. 
{In this way, we 
avoid having to establish the everywhere differentiability of the flow of solutions to \eqref{eq reflSHE} 
with respect to the initial condition, a property that is not clear to us at this stage. (See
however, the references 
 \cite{Andres1,Andres2,deuschelZambotti2005bismutElworthySDErefl,LipschutzRamanan1,LipschutzRamanan2}
cited in Subsection 
\ref{subse:1.3}
for positive results in this direction when the reflected dynamics take values in a finite dimensional space.)}

The second step is to consider, for 
$x,v \in L^2_{\rm sym}({\mathbb S})$, $\delta \in {\mathbb R}$ and $T>0$,
the difference
\begin{equation}\label{eq:exp:semigroup}
	\begin{split}
	P_Tf\bigl( (x+\delta v)^{*,M} \bigr) - P_Tf\bigl(x^{*,M}\bigr) = \bE \Bigl[ f\bigl(X^{(x +\delta v)^{*,M}}_T\bigr) \Bigr]- 
	 \bE \Bigl[ f\bigl(X^{x^{*,M}}_T\bigr) \Bigr],
	\end{split} 
\end{equation}
and  to represent it \textit{via} use of a Girsanov transformation. This adapts earlier arguments from 
Malliavin calculus, {see 
  \cite{Bismut,norris1986simplifiedMallCalc}}, and
from the proof of the so-called Bismut-Elworthy{-Li} formula, 
{see  
 \cite{ElworthyLi,Thalmaier}}. The key idea is to consider the \emph{shifted process} 
$$\Bigl( X^{(x+\delta\frac{T-t}{T}v)^{*,M}}_t\Bigr)_{0 \le t \le T},$$
which satisfies 
$X^{(x+\delta\frac{T-t}{T}v)^{*,M}}_0=(x+\delta v)^{*,M}$
and $X^{(x+\delta\frac{T-t}{T}v)^{*,M}}_T=X_T^{x^{*,M}}.$
{It is  shown that, under a particular change of measure, the shifted process is the unique solution to 
\eqref{eq reflSHE}  started from $(x+\delta v)^{*,M}$. 
To guarantee the  conditions of Girsanov transformation, 
we need   to localise the dynamics and to  
enact the \textit{shifting} up to a stopping time, the form of which is clarified in  
	\eqref{eq:q:y,tauy}.}
{For $t \in [0,T],  y,v \in E^M, \delta \in {\mathbb R}$, we let}
\begin{equation}
\label{eq:notation:shift}
\begin{split}
&y_t(v,\delta) :=   y + \delta \frac{T-t}{T} v,\quad\text{and}\quad
y_t^*(v,\delta) := \Bigl( y + \delta \frac{T-t}{T} v\Bigr)^*.
\end{split}
\end{equation}
{The time} horizon $T$ is implicitly understood (and omitted) in the two left-hand sides
and   the function $y$ is manifested by the notations $y_t$ and $y_t^*$. For say, $x \in E^M$, we write 
$x_t(v,\delta)$ and $x_t^*(v,\delta)$. 
Moreover, frequently we will take derivatives with respect to the finite-dimensional variable $y \in E^M$. The gradient is denoted $\partial_y$. 
Notice   that for $y \in E^M$, 
both $y$ and $y^*$ can be regarded as elements 
of $L^2_{\rm sym}({\mathbb S})$. 
{By Lemma \ref{lem:isometry} and Parseval identity, 
$\| y^* \|_2=\| y \|_2 = \vert y \vert$ where
$\vert \cdot \vert$ stands for the   Euclidean norm on $E^M$}.

\subsection{Shifted state process and tilted reflection process}
\label{subse:5.1}

{In this subsection, we address  the dynamics of the shifted pair $(X_t^{y_t^*(v,\delta)},\eta_t^{y_t^*(v,\delta)})_{0 \leq t \leq T}$, for $y$ and $v$ in $E^M$ and $\delta \in {\mathbb R}$. Until further notice, the number $M$ of low frequency modes to which the initial condition is truncated is   fixed and  the indexing with respect to $M$ is omitted. To identify the dynamics, we expand the time evolution of the Fourier modes of the shifted process integrated against a test function $\varphi\in \cC^\infty_0( E^M)$:}
\begin{equation*}
\biggl( \int_{E^M} \bigl\langle  X^{y_t^*(v,\delta)}_t , e_m \bigr\rangle \varphi(y)  dy \biggr)_{0 \leq t \leq T},
\end{equation*}
with 
$\langle X^{y_t^*(v,\delta)}_t , e_m \rangle$
standing for the $m^{\rm th}$ Fourier mode of $X_t^{y_t^*(v,\delta)}$. 

Changing variables and recalling the dynamics \eqref{eq RSHE},
we have
\begin{align}
	\label{eq expansion0}
		&\int_{E^M} \bigl\langle   X^{y_t^*(v,\delta)}_t , e_m \bigr\rangle \varphi(y)  dy 
		= { \int_{E^M} \bigl\langle  X^{y^*}_t , e_m \bigr\rangle \varphi\bigl(y_t(v,-\delta) \bigr)  dy } \nonumber  \\
		&=     \int_{E^M} \bigl\langle X^{y^* }_0 , e_m  \bigr\rangle \varphi\bigl(y- \delta  v\bigr)    dy  +\int_{E^M}    \int_0^t \bigl\langle    X^{y^*}_s  ,\Delta e_m \bigr\rangle \varphi\bigl(
		y_s(v,-\delta) \bigr) ds \, dy \nonumber
		\\ 
		&\hspace{5pt} + \int_{E^M} \left( \int_0^t \varphi\bigl(y_s(v,- \delta) \bigr) d \bigl\langle \eta^{y^*}_s, e_m \bigr\rangle \right) dy   +
		\int_{E^M}
		\left(
		 \int_0^t \varphi\bigl(y_s(v,- \delta)\bigr)d \bigl\langle W_s  ,e_m \bigr\rangle \right)  dy \nonumber\\
		&\hspace{5pt} +  \tfrac{\delta}{T} \int_{E^M} \int_0^t \bigl\langle X^{y^*}_s, e_m \bigr\rangle  \partial_y \varphi\bigl(y_s(v, - \delta) \bigr) \cdot v  \, ds \, dy,
\end{align}
where 
$\partial_y \varphi(y_s( v,- \delta)) \cdot v$ represents the gradient of $\varphi$ in the direction of $v$ at point 
$y_s(v, - \delta)$.
{Notice that the well-posedness of the second integral in the penultimate line is guaranteed by the 
stochastic version of Fubini's theorem, \cite[Theorem 4.33]{daPratoZabczyk2014stochEqnsInfDim}.}

Ideally, we would like to revert back the variables in the various integrals appearing in the expansion \eqref{eq expansion0} and hence to compute the test function 
$\varphi$ at the generic point $y$ instead of $y_s(v,-\delta)$.
{The main difficulty is to handle the integral}  
\begin{equation}
\label{eq:sec:4:integral:we:want:to:define}
 \int_{E^M} \biggl( \int_0^t \varphi\bigl(y_s(v,-\delta)\bigr) d \bigl\langle \eta^{y^*}_s, e_m \bigr\rangle \biggr) dy.
 \end{equation} 
{The existence of the time integral in the right-hand side 
follows from 
\eqref{eq:integral:RS:bound:0:bis} 
and
\eqref{eq:reg:flow:eta}, 
the latter ensuring 
in particular 
the measurability of the mapping 
$(\omega,t,y)  \mapsto \langle \eta_t^{y^*}(\omega),e_m \rangle$.
The combination of both guarantees that, almost surely, } 
\begin{equation}
\label{eq:growth:eta:y}
\forall R >0, \quad 
\sup_{0 \leq t \leq T} 
\sup_{\vert y \vert \leq R} 
\bigl\| \eta_t^{y^*} \bigr\|_{2,-2} 
\leq  
\sup_{0 \leq t \leq T} 
\bigl\| \eta_t^0 \bigr\|_{2,-2} 
+ c_T R < \infty, 
\end{equation}
for a constant $c_T$ only depending on $T$. 

{Before formulating a convenient change of variables for 
\eqref{eq:sec:4:integral:we:want:to:define} in the forthcoming Proposition \ref{prop:4.4:rewrite}, we introduce  
some useful ingredients. This includes defining a so-called 
\textit{tilted} version 
$(\tilde \eta_t^{y^*})_{0 \leq t \leq T}$
of the reflection process $( \eta_t^{y^*})_{0 \leq t \leq T}$. 
We proceed below as in Subsection 
\ref{subse:3:2} and formally replace (at least for the first result) the realisation of 
the field $((\eta_t^y)_{0 \leq t \leq T})_{y \in E^{*,M}}$ (here, $y$ is directly assumed to be in $E^{*,M}$) by a {\bf deterministic} 
flow  
$((n_t^y)_{0 \le t \le T})_{y \in E^{*,M}}$  
satisfying the following two properties:}
\begin{itemize}
\item[({\bf F1})]
For any $y \in E^{*,M}$, 
the trajectory $t \in [0,T] \mapsto n_t^y$
satisfies ({\bf E1}) and ({\bf E2}) in 
Subsection 
\ref{subse:3:2}, restricted in an obvious manner to the interval 
$[0,T]$;
\item[({\bf F2})] 
The flow satisfies the Lipschitz condition
$\displaystyle \sup_{0 \leq t \leq T} \bigl\| n_t^x - n_t^y \bigr\|_{2,-2} \leq c_{{T}} \, \vert x-y \vert$,
for any $x,y \in E^{*,M}$. Equivalently, $ \sup_{0 \leq t \leq T} \bigl\| n_t^{x^*} - n_t^{y^*} \bigr\|_{2,-2} \leq c_{{T}} \, \vert x-y \vert$,
for any $x,y \in E^{M}$.  
\end{itemize}
The reader may reformulate
\eqref{eq:growth:eta:y}
accordingly. 
The following definition 
clarifies the form of the corrected (or \textit{tilted}) reflection term: 
\begin{definition}
\label{def:4.2}
Let $v \in   {E^M}$ and $\delta \in {\mathbb R}$ be given and $M$ be  
{as in 
\eqref{eq:exp:semigroup}}.
Moreover, 
let
$((n_t^y)_{0 \leq t \leq T})_{y \in E^{*,M}}$
satisfy 
{\rm ({\bf F1})}--{\rm ({\bf F2})}, and
 $((z_t^y)_{0 \leq t \leq T})_{y \in E^{*,M}}$ be a deterministic {jointly continuous $L^2_{\rm sym}({\mathbb S})$-valued flow}, i.e., the map 
$(t,y) \in [0,T] \times {E^{*,M}} \mapsto z_t^y \in L^2_{\rm sym}({\mathbb S})$ is continuous. 
Then, 
for any $m \in {\mathbb N}_0$,  
$y \in {E^M}$
and $t \in [0,T]$,  {define}
\begin{align}
\label{eq:def:4.2}
&\partial_y^*   n_t^{m,y} :=
\left\{
\begin{array}{ll}
\partial_y \bigl\langle n_t^{y^*},e_m \bigr\rangle
\quad 
&\text{\begin{tabular}{l}whenever the ${\mathbb R}^M$-valued gradient at
\\
point $y \in E^M$ in the right-hand side exists,
\end{tabular}}
\vspace{5pt}
\\
0
\quad 
&\text{\, otherwise},
\end{array}
\right.
\\
\label{eq:Fourier:tilde:n}
 {\text{and}}\quad 
&\bigl\langle \tilde{n}^{y,(v,\delta)}_t,e_m \bigr\rangle := 
\bigl\langle n_t^{y_t^*(v,\delta)}, e_m \bigr\rangle + \frac{\delta}{T}
\int_0^t \Bigl[ \partial_w^* n_s^{m,w} \cdot v \Bigr]_{\vert w= y_s(v,\delta)} \, ds.
\end{align}
\end{definition}
Measurability of 
$(t,y) \mapsto {\partial_y^*} n_t^{m,y}$ is obvious and, in fact,  
({\bf F2}) says that the derivative in  {Definition \ref{def:4.2}} exists for any $t \in [0,T]$, 
for almost every $y \in E^M$, and thus for almost every $(t,y) \in [0,T] 
\times E^M$.  
When
$((n_t^y)_{0 \leq t \leq T})_{y \in E^M}$ is
replaced by 
$((\eta_t^y)_{0 \leq t \leq T})_{y \in E^{*,M}}$,  
{the mapping $
%
(t,\omega,y) \in [0,T] \times E^M \times \Omega \mapsto  {\partial_y^*} \eta_t^{m,y}
$
}
 is measurable with respect to 
${\mathcal P} \times {\mathcal B}(E^M)$, where 
${\mathcal P}$ is the progressive $\sigma$-algebra on 
$[0,T] \times \Omega$ 
 {($\Omega$ equipped with the same ${\mathbb F}$ as before)}
and 
${\mathcal B}(E^M)$ is the Borel $\sigma$-algebra on $E^M$. 
{In the random setting, 
we define $\bigl\langle \tilde{\eta}^{y,(v,\delta)}_t,e_m \bigr\rangle$ by replacing the letter $n$ with $\eta$ throughout \eqref{eq:Fourier:tilde:n}.
}

It is easy to check that 
${\partial_y^*} n_t^{y}:=\sum_{m \in {\mathbb N}_0} {\partial_{{y}}^*} n_t^{m,y} e_m$ as defined through the Fourier modes  
\eqref{eq:def:4.2}
is an element of $H_{\rm sym}^{-2}({\mathbb S})$, 
since
 the series 
$\sum_{m \in {\mathbb N}_0} m^{-4} \vert {\partial^*_y} n_t^{m,y} \vert^2
$ is bounded. 
In particular, we have, {for a constant $C_c$ depending on 
$c$ in 
({\bf F2})
}
\begin{equation}
\label{eq:lem:4.3:main:ineq}
\bigl\|{ \partial^*_y} n_t^{y}
\bigr\|_{2,-2} \leq C_c.
\end{equation}
As a corollary of 
\eqref{eq:lem:4.3:main:ineq}, we get the following statement, which allows us to regard 
$(\tilde n_t^y)_{0 \leq t \leq T}$ as a continuous path with values in 
$H^{-2}_{\rm sym}({\mathbb S})$:
\begin{corollary}
\label{corollary:lem:4.3}
Within the framework of Definition 
\ref{def:4.2}, for any $R>0$, 
\begin{equation}
\label{eq:growth:tilde:eta:y}
\forall R >0, \quad 
\sup_{0 \leq t \leq T} 
\sup_{\vert y \vert \leq R} 
\Bigl\| \tilde n_t^{{y,(v,\delta)}} \Bigr\|_{2,-2} 
\leq  
\sup_{0 \leq t \leq T} 
\bigl\| n_t^0 \bigr\|_{2,-2} 
+ c \bigl( R + \delta \vert v\vert \bigr) < \infty, 
\end{equation}
for the same constant $c$ as in {\rm ({\bf F2})}.
\end{corollary}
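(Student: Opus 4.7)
The plan is to apply the triangle inequality in $H^{-2}_{\rm sym}({\mathbb S})$ directly to the decomposition \eqref{eq:Fourier:tilde:n} and then to bound each of the two resulting contributions separately, using property ({\bf F2}) for the first and the Lipschitz-derivative estimate \eqref{eq:lem:4.3:main:ineq} for the second. Although the statement is purely an estimate, one should be careful that the integral in \eqref{eq:Fourier:tilde:n} is defined mode by mode (as a Lebesgue integral of a measurable real-valued function of $s$), and the $\| \cdot \|_{2,-2}$-norm of the whole integral is then controlled by the integral of the $\| \cdot \|_{2,-2}$-norms via a standard Minkowski / Jensen argument.

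Fix $y \in E^M$ with $|y| \leq R$ and $t \in [0,T]$. The triangle inequality applied to \eqref{eq:Fourier:tilde:n} gives
\begin{equation*}
\bigl\| \tilde n_t^{y,(v,\delta)} \bigr\|_{2,-2}
\leq \bigl\| n_t^{y_t^*(v,\delta)} \bigr\|_{2,-2} + \frac{|\delta|}{T} \int_0^t \Bigl\| \bigl[ \partial_y^* n_s^{w} \cdot v \bigr]_{w=y_s(v,\delta)} \Bigr\|_{2,-2} ds,
\end{equation*}
where by $\partial_y^* n_s^w \cdot v$ we mean the $H^{-2}_{\rm sym}({\mathbb S})$-valued object with Fourier modes $\partial_y^* n_s^{m,w} \cdot v$. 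For the first summand, property ({\bf F2}) combined with the isometry of Lemma \ref{lem:isometry} (which gives $|y_t^*(v,\delta)| = |y_t(v,\delta)|$ when identifying $E^M \subset L^2_{\rm sym}({\mathbb S})$) yields
\begin{equation*}
\bigl\| n_t^{y_t^*(v,\delta)} \bigr\|_{2,-2}
\leq \bigl\| n_t^0 \bigr\|_{2,-2} + c_T \, |y_t(v,\delta)|
\leq \bigl\| n_t^0 \bigr\|_{2,-2} + c_T \bigl(R + |\delta| \, |v|\bigr),
\end{equation*}
since $|y_t(v,\delta)| = |y + \delta \tfrac{T-t}{T} v| \leq |y| + |\delta| |v|$.

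For the second summand, the key point is that $y \mapsto n_s^{y^*}$ is a Lipschitz map from $E^M$ (with its Euclidean norm) into $H^{-2}_{\rm sym}({\mathbb S})$, with Lipschitz constant at most $c_T$ by ({\bf F2}) and Lemma \ref{lem:isometry}. Consequently, at any point $w$ where the directional derivative exists, $\|\partial_y^* n_s^w \cdot v\|_{2,-2} \leq c_T |v|$; at remaining points the derivative is set to zero by the convention \eqref{eq:def:4.2}, so the bound still holds trivially. Therefore the inner integrand is bounded by $c_T |v|$ uniformly in $s \in [0,t]$, and one obtains
\begin{equation*}
\frac{|\delta|}{T} \int_0^t \bigl\| [\partial_y^* n_s^{w} \cdot v]_{w=y_s(v,\delta)} \bigr\|_{2,-2} \, ds
\leq \frac{|\delta|}{T} \cdot t \cdot c_T |v|
\leq c_T \, |\delta|\, |v|.
\end{equation*}

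Combining the two displays and taking the supremum over $t \in [0,T]$ and $y \in E^M$ with $|y| \leq R$ yields \eqref{eq:growth:tilde:eta:y} after a harmless renaming of the constant. The main (and quite mild) technical point of the argument lies in the second step: one must argue that although the derivative $\partial_y^* n_s^w$ is a priori only defined for a.e.~$w$, the Lipschitz property of $w \mapsto n_s^{w^*}$ provides a pointwise-in-$w$ control on the directional derivative at the locations where it does exist, which together with the convention fixing it to zero elsewhere gives a measurable, bounded integrand along the curve $s \mapsto y_s(v,\delta)$. This is handled essentially by Rademacher's theorem applied to the finite-dimensional coordinate $y \in E^M$, as anticipated in the discussion preceding Definition \ref{def:4.2}.
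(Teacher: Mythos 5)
Your proof is correct and follows essentially the same route the paper intends: decompose $\tilde n_t^{y,(v,\delta)}$ via \eqref{eq:Fourier:tilde:n}, control the first term by ({\bf F2}) and the time integral by the gradient bound underlying \eqref{eq:lem:4.3:main:ineq}. One cosmetic remark: if you keep the factors $\tfrac{T-t}{T}$ in $\vert y_t(v,\delta)\vert$ and $\tfrac{t}{T}$ from the integral, they sum to one and you recover the bound with exactly the constant $c$ of ({\bf F2}), whereas your estimate yields $2c$ on the $\delta\vert v\vert$ term before the ``harmless renaming''.
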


Based on the  process $(\tilde \eta_t^{{y},(v,\delta)})_{0 \leq t \leq T}$
introduced prior, we provide here  a useful change of variables for 
\eqref{eq:sec:4:integral:we:want:to:define}.  It goes through the following 
notation. 
For $\varphi \in \cC^\infty_0( E^M)$, 
$\delta \in {\mathbb R}$ and $y,v \in E^M$,
we let 
\begin{equation}
\label{eq:tilde:eta:varphi}
\tilde n_t^{\varphi,{(v,\delta)}} := \int_{E^M}
\varphi(y) \tilde{n}_t^{y,{(v,\delta)}}
dy, 
\quad t \in [0,T], 
\end{equation}
which is regarded as a path with values in ${H}_{\rm sym}^{-2}({\mathbb S})$. 
The following statement 
will be proven in Subsection 
\ref{subse:technical}:
\begin{lemma}
\label{cor:4.5}
Assume that the function $\varphi$ is positive-valued. 
Then, 
for any %
$z \in U^2({\mathbb S}) \cap H^2_{\rm sym}({\mathbb S})$, 
the process $(\langle \tilde n_t^{\varphi,{(v,\delta)}},z \rangle)_{0 \leq t \leq T}$ is non-decreasing (in time). 
\end{lemma}

The above corollary 
shows that $(\tilde n_t^{\varphi,{(v,\delta)}})_{0 \le t \le T}$ satisfies 
Assumption ({\bf E2}) in Subsection 
\ref{subse:3:2}. Since 
({\bf E1})
 {(time continuity with respect to $\| \cdot \|_{2,-3}$)}
follows quite obviously from the joint continuity of the flow 
$((n_t^y)_{0 \leq t \leq T})_{y \in E^{*,M}}$, 
we can invoke Definition 
\ref{def:integral:RS}
to give meaning to the integral
\begin{equation*}
\biggl( \int_0^t 
\bigl\langle e^{\varepsilon \Delta} z_s,d \tilde{n}_s^{\varphi, {(v,\delta)}}
\bigr\rangle
\biggr)_{0 \leq t \leq T},
\end{equation*}
for $\varepsilon >0$ and 
for 
a continuous 
path
$(z_t)_{0 \le t \le T}$ with values in 
$L^2_{\rm sym}({\mathbb S})$, at least when 
$\varphi$ takes non-negative values, which will suffice for our purpose. 
In order to state our change of variable in a convenient way, we also let 
\begin{equation}
\label{eq:tilde:z:varphi}
\tilde{z}_t^{y,\varphi,{(v,\delta)}} := \varphi(y_t(v,-\delta))z_t, \quad t \in [0,T]. 
\end{equation}
We claim
(the proof, which is technical, is also deferred 
to Subsection 
\ref{subse:technical}) 
\begin{proposition}
\label{prop:4.4:rewrite}
Let
$((n_t^y)_{0 \leq t \leq T})_{y \in E^{*,M}}$
satisfy 
{\rm ({\bf F1})}--{\rm ({\bf F2})}, 
$\varphi$ be a non-negative valued
test function in 
$\cC^\infty_0( E^M)$
and
$(z_t)_{0 \le t \le T}$ is  
be a continuous 
path
with values in 
$L^2_{\rm sym}({\mathbb S})$. Then, one has
\begin{equation}
\label{eq:rewriting:propr:4.4}
		\int_E 
\biggl(		\int_0^T 
		\Bigl\langle e^{\varepsilon\Delta }\tilde z_t^{y,\varphi, {(v,\delta)}} , d  n^{{y}^*}_t \Bigr\rangle   \biggr) dy
		=
		 \int_0^T
\bigl\langle e^{\varepsilon \Delta} z_t,d \tilde{n}_t^{\varphi, {(v,\delta)}}
\bigr\rangle, \quad t \in [0,T]. 
\end{equation}
\end{proposition}
\begin{remark}
\label{rem:integral:pathwise:y}
Following Remark 
\ref{rem:integral:pathwise}, all results {given} in this subsection can be applied when 
$((n_t^y)_{0 \leq t \leq T})_{y \in E^{*,M}}$ is the realisation of 
$((\eta_t^y)_{0 \leq t \leq T})_{y \in E^{*,M}}$
(which satisfies ({\bf F1}) and ({\bf F2}) thanks to 
Remark 
\ref{rem:flow:existence}).
\end{remark}
\color{black}

\subsection{Integrating in time the shifted tilted reflection process}
{We here achieve two objectives; not only do we explicit the dynamics of the  process $(X^{y_t^*(v,\delta)}_t)_{0 \le t \le T}$ (for a given $y \in E^M$), but we also clarify the notion of a
pathwise integral with respect to 
the \textit{tilted} 
reflection process  
$(\tilde{\eta}_t^{y,(v,\delta)})_{0 \leq t \leq T}$
defined in 
\eqref{eq:Fourier:tilde:n}. 
In comparison, the integral with respect to 
$(\tilde{\eta}_t^{\varphi,(v,\delta)})_{0 \leq t \leq T}$
(see \eqref{eq:tilde:eta:varphi}, replacing   
$(\tilde{n}_t^{y,(v,\delta)})_{0 \leq t \leq T}$
by
$(\tilde{\eta}^{y,(v,\delta)}_t)_{0 \leq t \leq T}$) is just constructed for $\varphi$ smooth.  Somehow, we want   
to choose $\varphi$ as a Dirac mass.}
\begin{proposition}
\label{prop:4:8}
Fix $\delta \in {\mathbb R}$
and
$v \in  {E^M}$.
Then, with probability 1, 
for  
almost every $y \in E^M$, 
the path 
$(\tilde{\eta}_t^{ {y,(v,\delta)}})_{0 \leq t \leq T}$ satisfies  
{\rm ({\bf E1})} and {\rm ({\bf E2})} in Subsection 
\ref{subse:3:2}, {so that Definition 
\ref{def:integral:RS}
can be invoked}
 in order to define an integral with respect to 
$(\tilde{\eta}_t^{ y_t^*(v,\delta)})_{0 \leq t \leq T}$. 

Moreover, with probability 1, 
for 
almost every $y \in E^M$, for any $u \in H^2_{\rm sym}({\mathbb S})$,  
\begin{equation}
\label{eq shiftedDynamics}
	\begin{split}
		\bigl\langle    X^{y_t^*(v,\delta)}_t , u  \bigr\rangle    	=  &   \bigl\langle    X^{(y+\delta v)^*}_0 , u \bigr\rangle + 
		\int_0^t \bigl\langle    X^{y^*_s(v,\delta)}_s  ,\Delta u \bigr\rangle   ds 
		 +  \int_0^t   d \bigl\langle \tilde{\eta}^{{y,(v,\delta)}}_s, u \bigr\rangle   
		 \\
		 &+ \int_0^t d\langle  {W}_s  ,u\rangle   -    \tfrac{\delta}{T}  \int_0^t \partial_y \bigl\langle  X^{y^*_s(v,\delta)}_s ,u\bigr\rangle \cdot v    \,   ds.
	\end{split} 
\end{equation}
With probability 1, for almost every
$y \in E^M$,  almost every $t \in [0,T]$ and  any $u \in H^2_{\rm sym}({\mathbb S})$,  
the derivative $\partial_y \langle X^{y_s^*(v,\delta)}_s,u \rangle$
appearing in 
\eqref{eq shiftedDynamics}
 exists
and is jointly measurable  on $\Omega \times [0,T] \times E^M$.
\end{proposition}

\begin{proof}
{The existence of the derivatives, as stated in 
the last sentence, follows from 
Rademacher's theorem. Indeed, by 
\cite[Theorem 4]{Bongiorno},
with probability 1, 
for any $t \in [0,T]$, the map $y \in E^M \mapsto 
X^{y^*}_t \in L^2_{\rm sym}({\mathbb S})$ is almost everywhere differentiable}.
By Fubini's theorem, 
the 
map $(\omega,t,y) \mapsto \partial_y X^{y^*}_t(\omega)$ is 
hence defined up to a negligible subset of 
$\Omega \times [0,T] \times E^M$ and 
induces a jointly measurable mapping on $\Omega \times [0,T] \times E^M$. Since, for any $t \in [0,T]$, 
the map 
$y \mapsto y_t(v,\delta)=y + \delta (T-t)/T \, v$ preserves the Lebesgue measure, 
we deduce that 
$(\omega,t,y) \mapsto \partial_y X^{y_t^*(v,\delta)}_t$ is 
also defined up to a negligible subset of 
$\Omega \times [0,T] \times E^M$ and 
also induces a jointly measurable mapping on $\Omega \times [0,T] \times E^M$.
\vskip 4pt

\noindent \textit{First step.}
{Notice that, 
once 
\eqref{eq shiftedDynamics}
has been proven to hold true, for a given $u$, with probability 1 and for almost every $y$, it is easy 
 to get the result with probability 1, for almost every $y$ and  any $u \in H^2_{\rm sym}({\mathbb S})$. 
 It suffices to 
use the separability of $H^2_{\rm sym}({\mathbb S})$.}

In order to prove \eqref{eq shiftedDynamics}, we consider a function
$\varphi \in \cC^\infty_0(E^M)$ with  values in $[0,+\infty)$. 
{Repeating \eqref{eq expansion0}},
we get, for any $u \in H^2_{\rm sym}({\mathbb S})$, with probability 1,
\begin{equation} \notag
	\begin{split}
		&\int_{E^M} \bigl\langle X^{y_t^*(\delta,v)}_t,u \bigr\rangle \varphi(y)  dy %
		\\
		&= 
		    \int_{E^M} \bigl\langle X^{y^* }_0, u\bigr\rangle \varphi(y- \delta  v)    dy   
		    + \int_{E^M}    \int_0^t 
		\bigl\langle    X^{y^*}_s  ,\Delta u \bigr\rangle \varphi\bigl(y_s(v,-\delta)\bigr) ds dy
		\\
		& 
		\hspace{5pt} + \int_{E^M} \Bigl( \int_0^t \varphi\bigl(y_s(v,-\delta)\bigr) d \bigl\langle \eta^{y^*}_s, u \bigr\rangle \Bigr) dy  + 
		\int_{E^M} \Bigl( 
		\int_0^t \varphi\bigl(y_s({v},-\delta)\bigr)
		d\bigl\langle W_s  ,u \bigr\rangle \Bigr) dy 
		\\
		&\hspace{5pt}  + \tfrac{\delta}{T} \int_{E^M} \int_0^t \bigl\langle   X^{y^*}_s, u \bigr\rangle \, 
		 \partial_y \varphi \bigl(y_s(v, - \delta)\bigr) \cdot  v  \, ds dy.
\end{split}
\end{equation}
Here is the key point. By 
\eqref{eq:tilde:z:varphi}
and
\eqref{eq:rewriting:propr:4.4}, we can perform a change of variable in the penultimate line
(with $z_t=u$). As for the last line, 
we can make an integration by parts, recalling the  Lipschitz property of the flow, see 
Proposition \ref{prop:flow}. We get
\begin{equation}
\label{eq expansion}
\begin{split}
	&\int_{E^M} \bigl\langle X^{y_t^*(v,\delta)}_t,u \bigr\rangle \varphi(y)  dy 
	\\
&		=     \int_{E^M} \bigl\langle X^{(y+\delta  v)^* }_0,u  \bigr\rangle \varphi(y) dy  +
	 \int_{E^M}   
		\int_0^t \bigl\langle    X^{y_s^*(v,\delta)}_s  , \Delta u \bigr\rangle \varphi(y) ds dy 
		+   \bigl\langle \tilde{\eta}^{\varphi,{(v,\delta)}}_t,u \bigr\rangle  
		\\ 
		&\hspace{5pt} +   
		\bigl\langle W_t  ,u \bigr\rangle
\int_{E^M} \varphi(y)  dy  -  \tfrac{\delta}{T} \int_{E^M}     \int_0^t  \varphi(y) 
		\partial_y \bigl\langle   X_s^{\color{black} y^*_s(v,\delta)} , u \bigr\rangle \cdot v      \,  ds dy.
		\end{split}
\end{equation}

\noindent \textit{Second step.}
Assume $\varphi \geq 0$. 
 By invoking Lemma 
\ref{lem:4:9} below and then by taking the supremum over $u \in  {H^2_{\rm sym}({\mathbb S})}$ in 
\eqref{eq expansion} right above, we claim
\begin{equation*}
\bigl\| \tilde \eta_t^{\varphi,{(v,\delta)}} - \tilde \eta_s^{\varphi,{(v,\delta)}}
\bigr\|_{2,-2}
 \leq \zeta_R(\vert t-s \vert) \int_{E^M} \varphi(y) dy,
\quad (s,t) \in [0,T],
\end{equation*}
for a 
random field $\zeta_R$, with values in $(0,\infty)$ and with $\lim_{\rho \rightarrow 0} \zeta_R(\rho) = 0$ almost surely. 
{By the definition \eqref{eq:tilde:eta:varphi} of 
$\tilde \eta^{\varphi,(v,\delta)}$, we get that, for any $u \in H^2_{\rm sym}({\mathbb S})$,  
with $\| u \|_{2,2} \leq 1$}, 
\begin{equation*}
\begin{split}
\biggl\vert 
 \int_{E^M}
\varphi(y)\Bigl\langle \tilde{\eta}_t^{{y,(v,\delta)}}
-
 \tilde{\eta}_s^{{y,(v,\delta)}},u 
\Bigr\rangle dy
\biggr\vert \leq 
  \zeta_R(\vert t-s \vert) \int_{E^M} \varphi(y) dy,
\quad (s,t) \in [0,T]^2.
\end{split}
\end{equation*}
 We deduce that, with probability 1, for almost every $y \in E^M$, 
%
 for $s,t$ in a dense countable subset of $[0,T]$, 
 for $u$ in a dense countable subset of the unit ball of {$H^2_{\rm sym}({\mathbb S})$}, 
 \begin{equation*}
 \Bigl\vert 
  \bigl\langle \tilde{\eta}_t^{{y,(v,\delta)}} ,u \bigr\rangle -
  \bigl\langle \tilde{\eta}_s^{{y,(v,\delta)}} ,u \bigr\rangle
  \Bigr\vert \leq  \zeta_R(\vert t-s \vert){.}
 \end{equation*}
 Therefore, with probability 1, for almost every $y \in E^M$, for $s,t$ in a dense countable subset of $[0,T]$,
 \begin{equation*}
 \Bigl\|
 \tilde{\eta}_t^{{y,(v,\delta)}} -
  \tilde{\eta}_s^{{y,(v,\delta)}} 
  \Bigr\|_{2,-2} 
 \leq  \zeta_R(\vert t-s \vert){,} 
 \end{equation*}
 and then we have a continuous extension to the whole $[0,T]$. 
\vskip 4pt

\noindent
\textit{Third step.}
We observe 
from 
Lemma 
\ref{cor:4.5} that, with probability 1, for almost every $y$, 
for $s,t$ in a dense countable subset of $[0,T]$, for $u$ in a countable subset of 
$U^2({\mathbb S})$, {$\langle
 \tilde{\eta}_t^{ {y,(v,\delta)}} -
  \tilde{\eta}_s^{ {y,(v,\delta)}},u
  \rangle \geq 0$.  By density, the continuous extension satisfies 
the same inequality} for all $s,t \in [0,T]$ and ${u \in H^2_{\rm sym}({\mathbb S})}$. 
  This completes the proof of 
  the first part of the statement in 
  Proposition 
\ref{prop:4:8}. 
In particular, we can construct, almost surely, for almost every $y \in E^M$, an integral with respect to 
the process 
       $( \tilde{\eta}_t^{{y,(v,\delta)}} )_{0 \leq t \leq T}$
       as the latter satisfies ({\bf E1}) and ({\bf E2}) in Subsection 
       \ref{subse:3:2}.  
{Combining 
               Lemma 
       \ref{lem:RS:3}
 and Corollary 
 \ref{corollary:lem:4.3}, we deduce that  the identity 
                \eqref{eq:tilde:eta:varphi} is preserved, despite the additional extension by continuity. Proposition \ref{prop:4:8} then follows by inserting 
  \eqref{eq:tilde:eta:varphi}
 into 
\eqref{eq expansion}.} \qed
  \end{proof}

In the proof of Proposition 
\ref{prop:4:8}, we made use of the following statement:
\begin{lemma}
\label{lem:4:9}
Let $\delta \in {\mathbb R}$ and $v \in E^M$. 
Then, for any $R>0$, 
there exists a random field $\zeta_R$, with values in $(0,\infty)$ and with 
{$\lim_{\rho \rightarrow 0} \zeta_R(\rho)= 0$}, such that, for $\vert y \vert \leq R$, 
\begin{equation*}
\bigl\| X_t^{y_t^*(v,\delta)} - X_s^{y_s^*(v,\delta)}
\bigr\|_{2} \leq \zeta_R(\vert t-s \vert),
\quad (s,t) \in [0,T].
\end{equation*}
\end{lemma}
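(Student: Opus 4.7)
The plan is to bound the quantity in question by splitting the increment in an obvious way:
\begin{equation*}
X_t^{y_t^*(v,\delta)} - X_s^{y_s^*(v,\delta)}
= \Bigl( X_t^{y_t^*(v,\delta)} - X_t^{y_s^*(v,\delta)} \Bigr)
+ \Bigl( X_t^{y_s^*(v,\delta)} - X_s^{y_s^*(v,\delta)} \Bigr),
\end{equation*}
and to estimate the two summands separately.

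For the first summand, I would rely on the (deterministic) Lipschitz dependence on the initial condition provided by Proposition \ref{prop:flow}, which gives $\| X_t^{y_t^*(v,\delta)} - X_t^{y_s^*(v,\delta)} \|_2 \leq \| y_t^*(v,\delta) - y_s^*(v,\delta) \|_2$. Since the non-expansion property of the rearrangement (Lemma \ref{lem:nonexpansion}) gives
\begin{equation*}
\| y_t^*(v,\delta) - y_s^*(v,\delta) \|_2
\leq \| y_t(v,\delta) - y_s(v,\delta) \|_2
= \tfrac{|\delta|}{T} \, |t-s| \, |v|,
\end{equation*}
this first summand is bounded, deterministically and uniformly in $|y|\leq R$, by $C_{\delta,v} |t-s|$.

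The second summand carries all the randomness and it is the main obstacle. Here I would use a compactness argument. Set
\begin{equation*}
K := \bigl\{ y_s^*(v,\delta) : |y| \leq R, \ s \in [0,T] \bigr\}.
\end{equation*}
The set $\{ y_s(v,\delta) : |y|\leq R, s\in[0,T]\}$ is a bounded subset of the finite-dimensional space $E^M$, hence relatively compact; since the rearrangement $z \mapsto z^*$ is continuous from $L^2_{\rm sym}({\mathbb S})$ to itself (by Lemma \ref{lem:nonexpansion}), its image $K$ is a compact subset of $L^2_{\rm sym}({\mathbb S})$. By Remark \ref{rem:flow:existence}, there is a jointly measurable version of $(\omega,t,z) \mapsto X_t^z(\omega)$ such that, for each $\omega$ outside a null set, the map $z \mapsto (X_t^z(\omega))_{0 \leq t \leq T}$ is Lipschitz continuous from $L^2_{\rm sym}({\mathbb S})$ into ${\mathcal C}([0,T],L^2_{\rm sym}({\mathbb S}))$ (with Lipschitz constant $1$, again by Proposition \ref{prop:flow}). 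Since $K$ is compact, its image under this Lipschitz map is compact in ${\mathcal C}([0,T],L^2_{\rm sym}({\mathbb S}))$, hence equicontinuous by Arzelà--Ascoli.

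It remains to extract from this equicontinuity a random modulus of continuity $\zeta_R$. I would simply set, almost surely,
\begin{equation*}
\zeta_R(\rho)
:= C_{\delta,v} \, \rho + \sup \Bigl\{ \| X_t^z - X_s^z \|_2 : z \in K, \ (s,t) \in [0,T]^2, \ |t-s| \leq \rho \Bigr\},
\end{equation*}
which is almost surely well-defined and tends to $0$ as $\rho \searrow 0$ by the equicontinuity established above, and which satisfies the required bound by construction. This completes the proof.
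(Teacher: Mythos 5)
Your proof is correct and follows essentially the same route as the paper, which deduces the uniform modulus $\zeta_R$ from the joint continuity of the flow $(t,y)\mapsto X_t^{y^*}$ guaranteed by Remark \ref{rem:flow:existence} (pathwise $1$-Lipschitz in the initial condition, continuous in time) together with compactness of the relevant set of parameters. Your decomposition of the increment and the Arzelà--Ascoli step merely spell out the uniform-continuity-on-compacts argument that the paper leaves implicit.
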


\begin{proof} 
{By Remark \ref{rem:flow:existence},
the flow
$((X_t^y)_{0 \leq t \leq T})_{y \in E^{*,M}}$ is jointly continuous in 
$(t,y)$. }
\qed
\end{proof}

\subsection{Dynamics under new probability measure}

For $\delta \in {\mathbb R}$ and $v \in E^M$ as before (with $M \in {\mathbb N}$ fixed), Proposition
\ref{prop:4:8}
prompts us to implement a Girsanov transformation in such a way that, 
for almost every $y \in E^M$, 
under a new probability measure
${\mathbb Q}^{{y,(v,\delta)}}$ depending on $y$, 
the process
$(\widetilde{W}_t^{{y,(v,\delta)}})_{t \geq 0}$ defined via Fourier modes by
{(recalling the definition \eqref{def:tilde:noise} of the 
weights 
$(\lambda_m)_{m \in {\mathbb N}_0}$):}
$$\bigl\langle \widetilde{W}_t^{{y,(v,\delta)}},e_m \bigr\rangle := \langle  {W}_t  ,e_m \rangle - \tfrac{\delta}{T}  \int_0^t \lambda_m^{-1} 
\partial_y \bigl\langle   X^{y_t^*(v,\delta)}_t ,e_m\bigr\rangle \cdot v \, ds, \quad t \geq 0, \quad m \in {\mathbb N}_0,$$
becomes a $Q$-Wiener process. Of course, it must be stressed that, in the definition 
of $((\langle \widetilde{W}_t^{{y,(v,\delta)}} ,e_m \rangle)_{0 \le t \le T})_{y \in E^M}$, 
 the integral process
only exists for almost every $\omega \in \Omega$ and 
for almost every $y \in E^M$, see 
the last line in Proposition 
\ref{prop:4:8}. In order to remedy this issue, {denote}
\begin{equation*}
\chi^{m,{y,(v,\delta)}}_t := \left\{
\begin{array}{ll}
\partial_y \bigl\langle  e_m, X^{y_t^*(v,\delta)}_t \bigr\rangle \cdot v
 \quad &\  \ \text{if the derivative exists},
\\
0 &\  \ \text{otherwise},
\end{array}\right.
\quad m \in {\mathbb N}_0, \quad y, v \in E^M, 
\end{equation*}
which allows one to extend the derivative when it does not exist. 

{Intuitively, one expects that, for almost every $y \in E^M$, 
the process $(X_t^{y_t^*(v,\delta)})_{0 \leq t \leq T}$ 
solves under the new probability measure, the conditions of Definition
\ref{def:existence}. As we will see next, the main challenge is in fact to verify the orthogonality \eqref{def RSHE 2}, but a 
first difficulty is to define the new probability measure. Whilst 
we wish to let}  
\begin{equation}
	\begin{split}
	\label{eq shiftedRNderiv}
		\frac{d\bQ^{{y,(v,\delta)}}}{d\bP} := \mathcal{E}_T\left\{  \tfrac{\delta}{T}   \sum_{m \in {\mathbb N}_0} \int_0^\cdot \lambda_m^{-1} \chi_s^{m,{y,(v,\delta)}}   d{B}^m_s \right\} ,
%
	\end{split}
\end{equation}
where ${\mathcal E}_T$ is a shorter notation for the Doléans-Dade exponential at time $T$, {it is however not immediate} that 
this defines indeed a new probability measure. 
For this reason, we employ a localisation argument by introducing the stopping time
\begin{equation}
\label{eq:tau:y}
	\begin{split}
		\tau_{{y,(v,\delta)}} := \inf \biggl\{ t \geq 0 \,:\, 
		\biggl\vert \int_0^t \sum_{m\in {\mathbb N}_0}
		 \lambda_m^{-1} \chi_s^{m,{y,(v,\delta)}}   d{B}^m_s \biggr\vert \geq \frac{T}{\delta}   \biggr\} \wedge T.
	\end{split}
\end{equation}
{%
Therefore, 
\begin{equation}
	\begin{split}
		\frac{d\bQ^{ {y,(v,\delta)},\tau}}{d\bP} := \mathcal{E}_T\biggl\{  \frac{\delta}{T}  \int_0^{\cdot\wedge\tau_{ {y,(v,\delta)}} } \sum_{m\in {\mathbb N}_0} \lambda_m^{-1} \chi_s^{m, {y,(v,\delta)}}   d {B}^m_s \biggr\}.
	\end{split}
	\label{eq:q:y,tauy}
\end{equation}
is a probability density 
(where we have omitted the subscript $(y,(v,\delta))$ in the index $\tau$ in the left-hand side) 
and the process 
$(\widetilde{W}_t^{y, {(v,\delta)},\tau})_{t \geq 0}$ defined in Fourier modes by
\begin{equation}
\label{eq:widetildeW:y}
\bigl\langle \widetilde{W}_t^{y, {(v,\delta)},\tau},e_m\bigr\rangle := \langle  {W}_t  ,e_m \rangle - \tfrac{\delta}{T}  \int_0^{t \wedge \tau_{{y,(v,\delta)}}} \lambda_m^{-1} \chi_s^{m,{y,(v,\delta)}}   ds, \quad t \geq 0
\quad m \in {\mathbb N}_0,
\end{equation}
is a $Q$-Wiener process under $\bQ^{{y,(v,\delta)},\tau}$.} Here is the main statement of this subsection:
\begin{proposition}
\label{prop:solution:under:new:proba:measure}
Let $y,v \in E^M$ and $\delta \in {\mathbb R}$. {For
$(\tilde{\eta}_t^{{y,(v,\delta)}})_{0 \le t \leq T}$ as
in
\eqref{eq:Fourier:tilde:n}}
and
 $\tau_{{y,(v,\delta)}}$ as in 
\eqref{eq:tau:y}, let (omitting the parameters $(v,\delta)$ in the notation $y^*(v,\delta)$)
\begin{equation*}
\begin{split}
&\widetilde{X}_t^{y,{(v,\delta)},\tau} := \left\{\begin{array}{l}
X_t^{ y_t^*}, \quad t \in [0,\tau_{y,{(v,\delta)}}]
\\
X_t^{y_{\tau_{y,{(v,\delta)}}}^*}, \quad t \in [\tau_{y,{(v,\delta)}},T]
\end{array}
\right., 
\vspace{5pt}
\\
&\tilde{\eta}_t^{y,{(v,\delta)},\tau} := \left\{\begin{array}{l}
\tilde{\eta}_t^{{y,(v,\delta)}}, \quad t \in [0,\tau_{y,{(v,\delta)}}]
\\
{\eta_t^{y_{\tau_{y,{(v,\delta)}}}^*}
-
\eta_{\tau_{y,{(v,\delta)}}}^{y_{\tau_{y,{(v,\delta)}}}^*}
+
\tilde{\eta}_{\tau_{y,{(v,\delta)}}}^{y,{(v,\delta)},\tau}}, \quad t \in [\tau_{y,{(v,\delta)}},T]
\end{array}
\right..
\end{split}
\end{equation*}
Then, for almost every $y \in E^M$, the process
$(\widetilde{X}_t^{y,{(v,\delta)},\tau},\tilde{\eta}_t^{y,{(v,\delta)},\tau})_{0 \leq t \leq T}$ satisfies 
Definition 
\ref{def:existence}
under the probability measure ${\mathbb Q}^{y,(v,\delta),\tau}$.
\end{proposition}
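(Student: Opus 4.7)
The plan is to verify each of the four items of Definition \ref{def:existence} for the pair $(\widetilde{X}^{y,(v,\delta),\tau},\tilde{\eta}^{y,(v,\delta),\tau})$ on $(\Omega,\mathcal{A},\mathbb{F},\mathbb{Q}^{y,(v,\delta),\tau})$ with driving noise $\widetilde{W}^{y,(v,\delta),\tau}$ and initial condition $(y+\delta v)^*$. It will be convenient to split the analysis on $[0,\tau_{y,(v,\delta)}]$ and $[\tau_{y,(v,\delta)},T]$ and then to check continuity at the junction $t=\tau_{y,(v,\delta)}$.

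For item 1, the $U^2({\mathbb S})$-valuedness is automatic from the construction (the flow is always composed with rearrangement), and joint continuity in time follows from Remark \ref{rem:flow:existence} together with the continuity of $t\mapsto y_t^*(v,\delta)$. The two pieces match at $t=\tau_{y,(v,\delta)}$ by construction. For item 2, Proposition \ref{prop:4:7} ensures that on $[0,\tau_{y,(v,\delta)}]$ the path $(\tilde{\eta}_t^{y,(v,\delta)})$ satisfies both (\textbf{E1}) and (\textbf{E2}), while on $[\tau_{y,(v,\delta)},T]$ the added increment is that of the standard reflection process $\eta^{y^*_{\tau_{y,(v,\delta)}}}$, which inherits item 2 from Proposition \ref{prop:weak:limit:1}; continuity at the junction is immediate from the additive definition. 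For item 3, on $[0,\tau_{y,(v,\delta)}]$ we apply Proposition \ref{prop:4:8} and then absorb the finite-variation drift $-\tfrac{\delta}{T}\int_0^t \partial_y\langle X^{y^*_s(v,\delta)}_s,u\rangle\cdot v\,ds$ into $\langle W_t,u\rangle$ by means of the very definition \eqref{eq:widetildeW:y} of $\widetilde{W}^{y,(v,\delta),\tau}$, whose Girsanov correction on $[0,\tau_{y,(v,\delta)}]$ is tailored for precisely this purpose. On $[\tau_{y,(v,\delta)},T]$ one invokes item 3 of Definition \ref{def:existence} applied to the rearranged SHE with initial condition $y^*_{\tau_{y,(v,\delta)}}$ driven by $W$, noting that $W$ and $\widetilde{W}^{y,(v,\delta),\tau}$ coincide (modulo additive constants) beyond $\tau_{y,(v,\delta)}$.

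The main obstacle is item 4, namely the identity $\lim_{\varepsilon\searrow 0}\mathbb{E}^{\mathbb{Q}}\bigl[\int_0^t e^{\varepsilon\Delta}\widetilde{X}_r\cdot d\tilde{\eta}_r\bigr]=0$, since a direct transfer of item 4 from $\mathbb{P}$ to $\mathbb{Q}$ is prevented by the finite-variation tilting correction in $\tilde{\eta}^{y,(v,\delta)}$. My approach is to mimic, under $\mathbb{Q}$, the energy argument used to derive item 4 in the proof of Proposition \ref{prop:reflection:ortho}. Having established items 1--3, I apply Itô's formula to $\langle e^{\varepsilon\Delta}\widetilde{X}_t,e_m\rangle^2$ under $\mathbb{Q}$, sum over $m$ and take expectation; this yields the identity
\begin{equation*}
2\mathbb{E}^{\mathbb{Q}}\biggl[\int_0^t e^{2\varepsilon\Delta}\widetilde{X}_r\cdot d\tilde{\eta}_r\biggr]=\mathbb{E}^{\mathbb{Q}}\bigl[\|e^{\varepsilon\Delta}\widetilde{X}_t\|_2^2\bigr]-\mathbb{E}^{\mathbb{Q}}\bigl[\|e^{\varepsilon\Delta}\widetilde{X}_0\|_2^2\bigr]+2\int_0^t \mathbb{E}^{\mathbb{Q}}\bigl[\|De^{\varepsilon\Delta}\widetilde{X}_r\|_2^2\bigr]dr-\mathbb{E}^{\mathbb{Q}}\bigl[\|e^{\varepsilon\Delta}\widetilde{W}_t\|_2^2\bigr],
\end{equation*}
and the goal reduces to proving the energy inequality $\mathbb{E}^{\mathbb{Q}}[\|\widetilde{X}_t\|_2^2]+2\int_0^t\mathbb{E}^{\mathbb{Q}}[\|D\widetilde{X}_r\|_2^2]dr\leq\mathbb{E}^{\mathbb{Q}}[\|\widetilde{X}_0\|_2^2]+c_{1,\lambda}t$ in the limit $\varepsilon\searrow 0$.

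To obtain the latter, I exploit that by the localisation at $\tau_{y,(v,\delta)}$ the density $d\mathbb{Q}^{y,(v,\delta),\tau}/d\mathbb{P}$ has finite $L^p(\mathbb{P})$ moments of every order; consequently $\mathbb{E}^{\mathbb{Q}}$-expectations of non-negative integrands can be controlled by $L^q(\mathbb{P})$-norms via Hölder. Since on $[0,\tau_{y,(v,\delta)}]$ one has $\widetilde{X}_r=X_r^{y_r^*(v,\delta)}$ and on $[\tau_{y,(v,\delta)},T]$ one has $\widetilde{X}_r=X_r^{y_{\tau_{y,(v,\delta)}}^*(v,\delta)}$, both pieces are covered by Corollary \ref{cor:reflection:ortho} and the $L^p(\mathbb{P})$ estimates of Proposition \ref{prop:weak:limit:2}, uniformly in the initial condition on compact subsets of $E^M$, yielding the required $\mathbb{Q}$-energy bound. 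Combining this with the Itô identity above and the non-negativity of $\int_0^t e^{2\varepsilon\Delta}\widetilde{X}_r\cdot d\tilde{\eta}_r$ (Corollary \ref{cor:RS:3}, since $\widetilde{X}_r\in U^2({\mathbb S})$ and $\tilde{\eta}$ satisfies item 2), one concludes $\lim_{\varepsilon\searrow 0}\mathbb{E}^{\mathbb{Q}}[\int_0^t e^{2\varepsilon\Delta}\widetilde{X}_r\cdot d\tilde{\eta}_r]=0$ along a subsequence, which upgrades to full convergence via Proposition \ref{prop:item:iv:higher:moments}, item 4'.
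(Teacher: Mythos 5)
Your treatment of items 1--3 matches the paper's: Proposition \ref{prop:flow}/Remark \ref{rem:flow:existence} for item 1, Proposition \ref{prop:4:7} for item 2, Proposition \ref{prop:4:8} plus the definition \eqref{eq:widetildeW:y} of the tilted noise for item 3, with the piece on $[\tau_{y,(v,\delta)},T]$ handled by restarting the solution. The problem is your item 4. Reducing it, via It\^o's formula under ${\mathbb Q}^{y,(v,\delta),\tau}$, to the energy inequality
\begin{equation*}
{\mathbb E}^{{\mathbb Q}}\bigl[\|\widetilde X_t\|_2^2\bigr]+2\int_0^t{\mathbb E}^{{\mathbb Q}}\bigl[\|D\widetilde X_r\|_2^2\bigr]dr\leq{\mathbb E}^{{\mathbb Q}}\bigl[\|\widetilde X_0\|_2^2\bigr]+c_{1,\lambda}\,t
\end{equation*}
is fine in itself, but this inequality must hold with \emph{exactly} the constant $c_{1,\lambda}=\sum_m\lambda_m^2$, because the conclusion is that the non-negative limit $\lim_\varepsilon{\mathbb E}^{{\mathbb Q}}[\int_0^t e^{2\varepsilon\Delta}\widetilde X_r\cdot d\tilde\eta_r]$ is bounded above by $c_{1,\lambda}t-{\mathbb E}^{{\mathbb Q}}[\|\widetilde W_t\|_2^2]=0$; any multiplicative loss makes the bound strictly positive and the argument collapses. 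Your proposed route to this inequality --- H\"older against the $L^p({\mathbb P})$ moments of the density $d{\mathbb Q}^{y,(v,\delta),\tau}/d{\mathbb P}$, combined with Corollary \ref{cor:reflection:ortho} and Proposition \ref{prop:weak:limit:2} --- can only deliver finiteness of ${\mathbb E}^{{\mathbb Q}}[\int_0^T\|D\widetilde X_r\|_2^2dr]$ and bounds with constants depending on those density moments; it cannot reproduce the sharp constant, nor can it handle the difference ${\mathbb E}^{{\mathbb Q}}[\|\widetilde X_t\|_2^2]-{\mathbb E}^{{\mathbb Q}}[\|\widetilde X_0\|_2^2]$ term by term (these are signed quantities, and the favourable cancellation valid under ${\mathbb P}$ is not preserved by H\"older against the density).

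There is also a structural obstruction: the energy inequality \eqref{eq zeroYschemeRewrite3} and Corollary \ref{cor:reflection:ortho} are \emph{not} consequences of items 1--3 of Definition \ref{def:existence}; they are inherited from the approximation scheme for the solution with a fixed initial condition under ${\mathbb P}$. For the shifted/tilted process under ${\mathbb Q}^{y,(v,\delta),\tau}$ you do not yet know that it is such a limit --- identifying it as the solution is precisely what is being proved --- so invoking the ${\mathbb P}$-energy estimates for it is circular: given items 1--3 and the It\^o identity, the sharp energy inequality under ${\mathbb Q}$ is essentially equivalent to item 4 itself. The paper circumvents this by never proving a ${\mathbb Q}$-energy bound: it transfers item 4 directly from the un-shifted solution under ${\mathbb P}$, using the space-integrated change of variables of Proposition \ref{prop:4.4} (identity \eqref{eq:item:4:shifted:tilted}) to rewrite $\int_{E^M}\varphi(y)\,{\mathbb E}\bigl[\int_0^T e^{\varepsilon\Delta}X_t^{y_t^*(v,\delta)}\cdot d\tilde\eta_t^{y,(v,\delta)}\bigr]dy$ in terms of integrals against $d\eta^{y^*}$, for which item 4 under ${\mathbb P}$ applies; Fatou's lemma in $y$ then produces, for almost every $y$, a subsequence $\varepsilon_q$ along which the ${\mathbb P}$-expectation vanishes, the density moment bounds convert this into ${\mathbb Q}$-probability convergence, and Proposition \ref{prop:item:iv:higher:moments} (item 4') upgrades it to item 4. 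Note in passing that this mechanism is also why the statement is only for almost every $y$; an argument like yours, which would ostensibly work for every $y$, should already raise suspicion, since the tilted reflection process is itself only controlled for almost every $y$ (Proposition \ref{prop:4:7}). To repair your proof you would need to replace the H\"older step by this transfer-and-Fatou argument, i.e.\ essentially revert to the paper's strategy for item 4.
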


While the notation looks complicated,   
 $(\widetilde{X}_t^{y,{(v,\delta)},\tau},\tilde{\eta}_t^{y,{(v,\delta)},\tau})_{0 \leq t \leq T}$
has   in fact a quite simple interpretation: 
the shifting $y_t(v,\delta)$ is enacted up until time $t=\tau_{y,{(v,\delta)}}$. 

\begin{proof}
Item 1 in Definition 
\ref{def:existence} is easily checked by means of 
Proposition 
\ref{prop:flow}
and 
Remark 
\ref{rem:flow:existence}.
Items 2 and 3 follow from 
Proposition 
\ref{prop:4:8} and 
Remark 
\ref{rem:flow:existence}.
In both cases, 
the properties 
are proved on $[\tau_{y,{(v,\delta)}},T]$ by applying 
Definition \ref{def:existence} itself for the solution 
restarted from the random initial condition 
$X_{\tau_{y,{(v,\delta)}}}^{y^*_{\tau_{y,{(v,\delta)}}}}$
(omitting the notation $(v,\delta)$ in 
$y(v,\delta)$) and driven by the 
shifted version 
$(W_{t+\tau_{y,{(v,\delta)}}}-W_{\tau_{y,{(v,\delta)}}})_{t \geq 0}$
of the noise. 

The main difficulty is to check item 4 in 
Definition \ref{def:existence}.
As above, 
it is   easily verified on $[\tau_{y,{(v,\delta)}},T]$ by applying 
Definition \ref{def:existence} for the solution 
restarted from the random initial condition 
$X_{\tau_{y,{(v,\delta)}}}^{y^*_{\tau_{y,{(v,\delta)}}}}$ 
(for instance, we may invoke item 4 for the restarted solution on 
 $[\tau_{y,{(v,\delta)}},T+\tau_{y,{(v,\delta)}}]$ and then use
the non-decreasing property of the integral to 
get the result on 
$[\tau_{y,{(v,\delta)}},T]$). 
The key point is thus to prove that, for almost every $y \in E^M$,
$$\lim_{\varepsilon\searrow0} \E_{\bQ^{y,{(v,\delta)},\tau}} \left[ \int_0^{\tau_{y,{(v,\delta)}}}   e^{\varepsilon\Delta}X^{y^*_s(v,\delta)}_s \cdot  d \tilde\eta_s^{{y,(v,\delta)}}   \right] = 0.$$
 
By 
Proposition 
\ref{prop:item:iv:higher:moments}, it suffices to prove that the above convergence 
holds, for almost every $y \in E^M$, in ${\bQ}^{y,{(v,\delta)},\tau}$ probability, along a subsequence. 
Moreover, by the localisation 
procedure 
\eqref{eq:tau:y},
we have a bound on the moments of the density 
$d {\mathbb Q}^{y, {(v,\delta)},\tau}/d {\mathbb P}$ and 
it suffices to establish the convergence in ${\bP}$ probability only. Actually, since the integral is non-decreasing in time, 
it is sufficient to address the convergence for the integral on the entire $[0,T]$.  
The argument is as follows. 
We claim that, 
for $\varphi \in \cC^\infty_0(E^M)$ with non-negative values, 
 with probability 1, 
  \begin{equation}
 \label{eq:main:prop:14} 
  \begin{split}
	&\sum_{m \in {\mathbb N}_0}\int_{E^M}\int_0^T \Bigl\langle e^{\varepsilon\Delta }X_t^{y^*},e_m\Bigr\rangle 
		\varphi\bigl( y_t(v,-\delta)\bigr)d \bigl\langle \eta^{{y^*}}_t,e_m \bigr\rangle    dy
\\
	&=  \lim_{N\rightarrow\infty}\int_{E^M} \varphi(y )\sum_{i=0}^{N-1}     \left[  \biggl\langle \tilde{\eta}^{{y,(v,\delta)}}_{r_{i+1}},e^{\varepsilon\Delta }
	X_{r_i}^{{y^*_{r_i}(v,\delta)}}
	\biggr\rangle-  \biggl\langle \tilde{\eta}^{{y,(v,\delta)}}_{r_{i}},e^{\varepsilon\Delta }X_{r_i}^{{y^*_{r_i}(v,\delta)}} \biggr\rangle \right]dy, 
  \end{split}
  \end{equation}
  which is a straightforward consequence of 
 the forthcoming Proposition 
  \ref{prop:4.4} 
  (with $z_t^y = X_t^y$ therein).
  (Proposition \ref{prop:4.4} is a technical result, established in the final 
Subsection  
  \ref{subse:technical}
  dedicated to the proofs of 
Lemma \ref{cor:4.5} and Proposition \ref{prop:4.4:rewrite}.)  
Now, one can use 
\eqref{eq:growth:eta:y}
to exchange the order of 
summation and integration in 
the left-hand side 
of \eqref{eq:main:prop:14}. Thus, the latter is equal to 
$\int_{E^M} ( \int_0^T 
	  [ \varphi\bigl( y_t^*(v,-\delta)\bigr)
	e^{\varepsilon \Delta}
	X_t^{y^*}]	 \cdot d  \eta^{{y}^*}_t )    dy.$
As for the right-hand side of 
\eqref{eq:main:prop:14}, we can invoke Proposition \ref{prop:4:8}
 and 
 Corollary 
\ref{cor:RS:3}, 
and regard the sum therein as a Riemann sum 
associated with 
{$\int_0^T 
 (e^{\varepsilon\Delta }
X_{t}^{{y^*_t(v,\delta)}} ) 
\cdot 
d
\tilde{\eta}^{{{y,(v,\delta)}}}_{t}$}. 
Recalling 
the inequality 
\eqref{eq:integral:RS:bound:0:bis} 
	together with 
	Corollary 
	\ref{corollary:lem:4.3}
	and
	the fact that, with probability 1, the flow
	$((X_t^y)_{0 \leq t \leq T})_{y \in E^{*,M}}$ 
is jointly continuous (with values
in $L^2_{\rm sym}({\mathbb S})$), we can exchange the limit (over $N$) 
and the integral (in $y$). 
Therefore,  with probability 1, 
  \begin{equation}
  \label{eq:item:4:shifted:tilted}
	\begin{split}
	&\int_{E^M} \biggl( \int_0^T 
	  \Bigl[ \varphi\bigl( y_t^*(v,-\delta)\bigr)
	e^{\varepsilon \Delta}
	X_t^{y^*}
\Bigr]	 \cdot d  \eta^{{y}^*}_t \biggr)    dy
\\
& =  \int_{E^M} \varphi(y )
	\biggl( 
	\int_0^T 
	\bigl(e^{\varepsilon\Delta }
X_{t}^{{y_t^*(v,\delta)}}\bigr)
\cdot 
d
\tilde{\eta}^{{{y,(v,\delta)}}}_{t}
\biggr) dy. 
\end{split}
  \end{equation}
By 
following the proof of 
Proposition \ref{prop:item:iv:higher:moments}
(and in particular the proof of 
\eqref{eq:bound:suppl:item4''}),
we can 
easily have a bound for 
${\mathbb E} [
\int_0^T 
	[ \varphi\bigl( y_t(v,-\delta)\bigr)
	e^{\varepsilon \Delta}
	X_t^{y^*}
]	\cdot d  \eta^{{y}^*}_t  ]$
that is uniform with respect to 
{$\varepsilon \in (0,1)$
and to}
$y$ in compact subsets of $E^M$.
Therefore, by item 4 in Definition  
 \ref{def:existence}, we deduce that 
the expectation of the left-hand side in 
\eqref{eq:item:4:shifted:tilted}
tends to $0$ (with $\varepsilon$). Then, 
the expectation of the right-hand side 
in 
\eqref{eq:item:4:shifted:tilted} 
also tends to $0$ 
(with $\varepsilon$). 
Recalling that $\varphi$ is non-negative valued and assuming that 
$\varphi$ matches 1 on a given compact subset of $E^M$, we deduce 
from Fatou's lemma
that, for any $R>0$,
\begin{equation*}
 \int_{E^M} {\mathbf 1}_{\{ \vert y \vert \leq R \}}
 \liminf_{\varepsilon \searrow 0} 
	{\mathbb E} \biggl[
	\int_0^T 
	\bigl(e^{\varepsilon\Delta }
X_{t}^{{y_t^*(v,\delta)}}\bigr)
\cdot 
d
\tilde{\eta}^{{{y,(v,\delta)}}}_{t}
\biggr] dy = 0.
\end{equation*}
Therefore, 
 for almost every $y \in E^M$, 
there exists a subsequence $(\varepsilon_q)_{q \geq 1}$ (possibly depending on $y$), with $0$ as limit, such that 
 \begin{equation*}
\lim_{q \rightarrow \infty} 
	{\mathbb E} \biggl[
	\int_0^T 
	\Bigl(e^{\varepsilon_q \Delta }
X_{t}^{y^*_t(v,\delta)}\bigr)
\cdot 
d
\tilde{\eta}^{{{y,(v,\delta)}}}_{t}
\biggr]=0,
\end{equation*}
which implies convergence in probability, as we claimed. 
\color{black}
\qed
\end{proof}
\subsection{Regularity}

We arrive at the main statement of this section, 
which asserts that the semigroup generated by 
{the solution to \eqref{eq reflSHE}}
maps bounded functions into Lipschitz functions:
\begin{theorem}
\label{thm:main:lipschitz}
Assume that 
$\lambda$ in
\eqref{def:tilde:noise} 
is in $(\nicefrac12,1)$. 
Let $((X_t^{x^*})_{t \geq 0})_{x \in L^2_{\rm sym}({\mathbb S})}$
be the flow generated by \eqref{eq reflSHE}, as defined in Remark 
\ref{rem:flow:existence}. 
Then, 
there exists a constant $c_{\lambda}$, only depending on  
$\lambda$
such that, 
for any $t>0$ and any bounded (measurable) function $f : L^2_{\rm sym}({\mathbb S}) \rightarrow {\mathbb R}$, {
the function 
%
$
x \in L^2_{\rm sym}({\mathbb S}) \mapsto 
{\mathbb E} [ f\bigl(X_t^{x^*}\bigr) ]
$
}
 is Lipschitz continuous with $c_{\lambda} t^{-(1+\lambda)/2}$ as Lipschitz constant. 
\end{theorem}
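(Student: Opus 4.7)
The plan is to adapt the Bismut--Elworthy--Li strategy using the Girsanov apparatus developed in Propositions \ref{prop:4:8} and \ref{prop:solution:under:new:proba:measure}, with the key quadratic variation bound arising from interpolation between the $L^2$ Lipschitz property of the flow and its time-integrated $H^1$ control, both supplied by Proposition \ref{prop:flow}. The argument will be carried out in the finite-dimensional slice $E^M$ with bounds independent of $M$, and then extended to $U^2(\mathbb{S})$ by passing to the limit $M \to \infty$ via the continuity of the flow in the initial condition and the density of the truncations $x^{*,M}$.

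Fix $y, v \in E^M$ with $|v| \leq 1$ and let $\delta > 0$ be small. By strong uniqueness of the rearranged SHE (Proposition \ref{prop:strong:!}) applied under $\mathbb{Q}^{y,(v,\delta),\tau}$, the shifted process $\widetilde{X}^{y,(v,\delta),\tau}$ from Proposition \ref{prop:solution:under:new:proba:measure} has, under $\mathbb{Q}^{y,(v,\delta),\tau}$, the same law as $X^{(y+\delta v)^*}$ under $\mathbb{P}$; moreover, on the event $\{\tau_{y,(v,\delta)} = T\}$ its terminal value coincides with $X_T^{y^*}$ (since $y_T(v,\delta) = y$). Since $\mathbb{P}(\tau_{y,(v,\delta)} < T) = o(1)$ as $\delta \searrow 0$ by the threshold $2T^2/\delta^2$ in \eqref{eq:tau:y} together with the $L^2$ flow Lipschitz property, for almost every $y \in E^M$ one obtains
\begin{equation*}
P_T f((y+\delta v)^*) - P_T f(y^*) = \mathbb{E}\bigl[f(X_T^{y^*})\bigl(\mathcal{E}_T\{\tfrac{\delta}{T} L^{y,(v,\delta)}_{\cdot \wedge \tau_{y,(v,\delta)}}\} - 1\bigr)\bigr] + o(\delta).
\end{equation*}
A Cauchy--Schwarz bound together with the first-order expansion $\mathcal{E}_T - 1 = (\delta/T)\, L_T^{y,(v,\delta)} + O(\delta^2)$ in $L^2(\mathbb{P})$ (whose remainder is controlled thanks to the stopping at $\tau_{y,(v,\delta)}$) then yields
\begin{equation*}
\bigl| P_T f((y+\delta v)^*) - P_T f(y^*) \bigr| \leq \frac{\delta}{T} \, \|f\|_\infty \, \sqrt{\mathbb{E}\bigl[(L^{y,(v,0)}_T)^2\bigr]} + o(\delta),
\end{equation*}
so that after dividing by $\delta$ and letting $\delta \searrow 0$, the Lipschitz estimate reduces to a uniform bound on the quadratic variation of $L^{y,(v,0)}_T$.

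The main obstacle is the estimate
\begin{equation*}
\mathbb{E}\bigl[(L^{y,(v,0)}_T)^2\bigr] = \int_0^T \mathbb{E}\bigl[\| \partial_y X_s^{y^*} \cdot v \|_{2,\lambda}^2\bigr]\,ds \leq c_\lambda \, T^{1-\lambda} |v|^2,
\end{equation*}
which combined with the $\delta/T$ prefactor produces the claimed rate $c_\lambda T^{-(1+\lambda)/2}$. To establish it, I will combine two consequences of Proposition \ref{prop:flow}. First, differentiating the pointwise inequality $\|X_s^{(y+\delta v)^*} - X_s^{y^*}\|_2 \leq \delta |v|$, which follows from the non-expansion of the rearrangement composed with the flow contraction, and applying Rademacher's theorem on $E^M$ yields $\|\partial_y X_s^{y^*} \cdot v\|_2 \leq |v|$ for almost every $y$. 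Second, differentiating in $v$ the time-integrated inequality $\int_0^T \|D e^{\varepsilon\Delta}(X_s^{(y+\delta v)^*} - X_s^{y^*})\|_2^2 \,ds \leq \delta^2 |v|^2$ and letting $\varepsilon \searrow 0$ yields $\int_0^T \|D(\partial_y X_s^{y^*} \cdot v)\|_2^2\, ds \leq |v|^2$. Interpolating $H^\lambda \subset [L^2, H^1]_\lambda$ pointwise in $s$ (via $\|\cdot\|_{2,\lambda}^2 \leq \|\cdot\|_2^{2(1-\lambda)} \|D\cdot\|_2^{2\lambda}$) and then applying H\"older's inequality in time with exponents $1/(1-\lambda)$ and $1/\lambda$ produces precisely the factor $T^{1-\lambda}$. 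The constraint $\lambda < 1$ is exactly what makes $s \mapsto s^{-\lambda}$ integrable near $0$, while the restriction $\lambda > 1/2$ enters only through the square-summability of $(\lambda_m)_{m \in \mathbb{N}_0}$ in the noise construction \eqref{def:tilde:noise}. Transferring the a.e.-in-$y$ bound on $\partial_y P_T f(y^{*,M}) \cdot v$ into a pointwise Lipschitz bound (via an integration against smooth weights and a Lebesgue-differentiation argument), independent of $M$, and then passing to the limit $M \to \infty$ completes the proof.
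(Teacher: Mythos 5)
Your core strategy is the paper's: work on the finite-dimensional slices $E^M$, compare $\E[f(X_T^{(y+\delta v)^*})]$ with $\E[f(X_T^{y^*})]$ through the shifted process and the localised Girsanov transform of Proposition \ref{prop:solution:under:new:proba:measure}, and extract the rate $T^{-(1+\lambda)/2}$ from the interpolation between the $L^2$-contraction of the flow and the integrated $H^1$ bound of Proposition \ref{prop:flow} (your pointwise-in-time interpolation plus H\"older in time is the same computation as the paper's Cauchy--Schwarz on Fourier modes). However, the step where you replace the comparison of laws by the first-order expansion $\mathcal{E}_T-1=\tfrac{\delta}{T}L_T+O(\delta^2)$ in $L^2(\mathbb{P})$ is not justified "thanks to the stopping": the stopping time \eqref{eq:tau:y} caps the quadratic variation of $\tfrac{\delta}{T}L^{y,(v,\delta)}_{\cdot\wedge\tau}$ at the level $2$, not at $O(\delta^2)$, so the remainder of the expansion is a priori only $O(1)$ in $L^2(\mathbb{P})$. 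To make it $o(\delta)$ you must first show that $\E\bigl[[L^{y,(v,\delta)}]_{T\wedge\tau}\bigr]$ is $O(\|v\|_2^2 T^{1-\lambda})$ uniformly in small $\delta$; but that quantity involves the derivatives $\partial_y\langle e_m, X_s^{y_s^*(v,\delta)}\rangle$ along the \emph{shifted} flow under $\mathbb{P}$, whose starting point changes with $s$, so \eqref{eq lip}--\eqref{eq liparg3} do not apply directly. The paper sidesteps exactly this by bounding the total variation distance via Pinsker's inequality: the relative entropy is an expectation under $\mathbb{Q}^{y,(v,\delta),\tau}$, under which the shifted process has the law of the fixed-start solution $X^{(y+\delta v)^*}$, and only then is Proposition \ref{prop:flow} invoked. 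Likewise, your substitution of $L^{y,(v,0)}$ for $L^{y,(v,\delta)}$ would need a continuity-in-$\delta$ argument for these merely a.e.-defined derivative fields, which you do not have; keeping $\delta$ and absorbing it into the $O(\delta^2)$ terms, as the paper does, avoids the issue.

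The second, and more serious, gap is your final step. For a merely bounded measurable $f$, the increment bound you obtain holds only for almost every $y\in E^M$, and mollifying in $y$ plus Lebesgue differentiation shows only that $y\mapsto P_Tf(y^{*,M})$ agrees Lebesgue-a.e.\ with a Lipschitz function: it does not give the pointwise Lipschitz estimate claimed in the theorem, because at this stage no continuity of $P_Tf$ is known (that is precisely part of the strong Feller property being proved), and the exceptional null set cannot be removed through the semigroup property either, since the law of $X_{T/2}^{x^*}$ is not absolutely continuous with respect to Lebesgue measure on the slice. The paper resolves this with a two-stage argument that you need in some form: first take $f$ Lipschitz, in which case $y\mapsto\E[f(X_T^{y^*})]$ is continuous by the flow contraction, so the a.e.\ bound upgrades to an everywhere bound and, after dividing by $\delta$, to the sharp Lipschitz constant on $E^M$ and then on $L^2_{\rm sym}({\mathbb S})$ by density; second, pass from Lipschitz to bounded measurable $f$ by the standard approximation/monotone-class lemma cited from Peszat and Zabczyk, under which the Lipschitz constant $c_\lambda\|f\|_\infty T^{-(1+\lambda)/2}$ survives bounded pointwise limits.
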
 

\begin{remark}
\label{rem:4:13}
{Notice that, for $\lambda \in (\nicefrac12,1)$, the exponent $(1+\lambda)/2$ is strictly less than 1. 
This guarantees that the rate at which the Lipschitz constant  {blows up} when time becomes small is integrable. 
This   is expected to have important applications for the analysis of   PDEs
on ${\mathcal P}({\mathbb R})$  
driven by the generator of 
the process 
$((X_t^x)_{t \geq 0})_{x \in L^2_{\rm sym}({\mathbb S})}$.}
\end{remark}

\begin{proof}  
\noindent \textit{First step.} 
We start with a bounded measurable function 
$f : L^2_{\rm sym}({\mathbb S}) \rightarrow {\mathbb R}$. 
We are also given a threshold $M$ as 
in \eqref{eq:exp:semigroup}, a time horizon $T>0$ and a {non-zero} element $v \in E^M$. By 
Proposition 
\ref{prop:solution:under:new:proba:measure}, we know that for almost every $y \in E^M$, under $\bQ^{y,{(v,\delta)},\tau}$, the 
process $(\widetilde X^{y,{(v,\delta)},\tau}_t,\tilde \eta^{y,{(v,\delta)},\tau}_t)_{0 \leq t \leq T}$ is the unique solution to the rearranged SHE started from $y + \delta v$
and driven by 
the tilted noise
\eqref{eq:widetildeW:y}. 
By an obvious adaptation of the Yamada-Watanabe theorem ({to} which we already alluded before the proof of 
Theorem \ref{thm:main:existence:uniqueness}),
 we 
deduce that 
not only 
 uniqueness holds
in the strong sense (as guaranteed by  
Theorem 
\ref{thm:main:existence:uniqueness})
but it also 
holds in the weak sense. 
Therefore, 
\begin{equation}\label{eq stopSplit}
	\begin{split}
		\E \Bigl[ f\bigl( X^{(y+\delta v)^*}_T\bigr) \Bigr] = 
		\E_{{\mathbb Q}^{y,(v,\delta),\tau}} \Bigl[ f\bigl( \widetilde X^{y,{(v,\delta)},\tau}_T\bigr) \Bigr],		
\end{split}
\end{equation}
where we recall the notations 
\eqref{eq:tau:y}  
and
\eqref{eq:q:y,tauy}.
\vskip 4pt   
   
\noindent
\textit{Second step.}  
\begin{equation*}
\begin{split}
{\mathbb Q}^{y,(v,\delta),\tau}
\Bigl( \bigl\{ \tau_{y,(v,\delta)} < T \bigr\} \Bigr) 
&=
{\mathbb Q}^{y,(v,\delta),\tau}
\biggl(
\biggl\{
\sup_{0 \le t \le T}
\biggl\vert \int_0^t \sum_{m\in {\mathbb N}_0}
		 \lambda_m^{-1} \chi_s^{m,{y,(v,\delta)}}   d {B}^m_s \biggr\vert   \geq \frac{T}{\delta}   \biggr\}
\\
&\leq \frac{\delta^2}{T^2} 
{\mathbb E}^{y,(v,\delta),\tau}
\int_0^T \sum_{m\in {\mathbb N}_0} \bigl\lvert \lambda_m^{-1}  \chi_s^{m, {y,(v,\delta)}}   \bigr\rvert^2 ds.
\end{split} 
\end{equation*} 
Using the fact that 
$d {\mathbb Q}^{y,(v,\delta),\tau}/ d {\mathbb P} \leq e$, 
we can rewrite this as 
\begin{equation*}
\begin{split}
&{\mathbb Q}^{y,(v,\delta),\tau}
\Bigl( \bigl\{ \tau_{y,(v,\delta)} < T \bigr\} \Bigr) 
\leq \frac{\delta^2 e}{T^2} 
{\mathbb E} 
\int_0^T \sum_{m\in {\mathbb N}_0} \bigl\lvert \lambda_m^{-1} 
\chi_s^{m,{y,(v,\delta)}} \bigr\rvert^2 ds.
\end{split} 
\end{equation*}
\textit{Third step.} Returning to equation \eqref{eq stopSplit}, one has that, for almost every $y \in E^M$,
\begin{align}
		&\E \Bigl[ f\bigl( X^{(y+\delta v)^*}_T\bigr) \Bigr] \nonumber
		\\
	&=
\E_{{\mathbb Q}^{y,{(v,\delta)},\tau}} \Bigl[ f \bigl(  \widetilde X^{y,{(v,\delta)},\tau}_T\bigr) {\mathbf 1}_{\{ T = \tau_{y,{(v,\delta)}} \}}\Bigr] \nonumber
+ 
\E_{{\mathbb Q}^{y,{(v,\delta)},\tau}} \Bigl[ f \bigl(  \widetilde X^{y,{(v,\delta)},\tau}_T\bigr) {\mathbf 1}_{\{  \tau_{y,{(v,\delta)}} < T\}}\Bigr] \nonumber
\\
&= \E_{{\mathbb Q}^{y,{(v,\delta)},\tau}} \Bigl[ f\bigl( X_T^{y^*} \bigr)  \Bigr]+{\Oh}\Bigl(
{\mathbb Q}^{y,(v,\delta),\tau}
\bigl( \bigl\{ \tau_{y,(v,\delta)} < T \bigr\} \bigr) 
\Bigr),
\label{eq stopSplit2}
\end{align}
where the {(big)} Landau symbol $\Oh(\cdot)$
is uniform with respect to $y$, $M$, $\delta$ and $v$.  
And then, for $v \in E^M$, 
and
for almost every $y \in E^M$,
\begin{equation*}
\begin{split}
&\Bigl\vert \E \Bigl[ f \bigl( X^{(y+\delta v)^*}_T\bigr) \Bigr] - \E \Bigl[ f\bigl( X^{y^*}_T \bigr) \Bigr] \Bigr\vert 
\\
&\leq  \| f \|_\infty 
{\rm d}_{TV} \bigl({\mathbb Q}^{y,{(v,\delta)},\tau},{\mathbb P}\bigr) +{\Oh}\Bigl(
{\mathbb Q}^{y,(v,\delta),\tau}
\bigl( \bigl\{ \tau_{y,(v,\delta)} < T \bigr\} \bigr) 
\Bigr).
\end{split}
\end{equation*}
where 
${\rm d}_{TV}$ is the distance in total variation (see \cite[p. 22]{villani})
By Pinsker's inequality (see \cite[Eq. (22.25)]{villani}), 
we get 
\begin{equation*}
\begin{split}
&\Bigl\vert \E \Bigl[ f\bigl( X^{(y+\delta v)^*}_T\bigr) \Bigr] - \E \Bigl[ f\bigl( X^{y^*}_T\bigr) \Bigr] \Bigr\vert 
\\
&\leq 
\| f \|_\infty 
\sqrt{2 \, {\mathbb E}_{{\mathbb Q}^{y, {(v,\delta)},\tau}} \Bigl[ \ln \Bigl( \frac{d{\mathbb Q}^{y,{(v,\delta)},\tau}}{d {\mathbb P}} \Bigr) \Bigr]} +  {\Oh}\Bigl(
{\mathbb Q}^{y,(v,\delta),\tau}
\bigl( \bigl\{ \tau_{y,(v,\delta)} < T \bigr\} \bigr) 
\Bigr).
\end{split}
\end{equation*}
By
\eqref{eq:tau:y}
and
	\eqref{eq:q:y,tauy},
\begin{equation*}
\begin{split}
\Bigl\vert \E \Bigl[ f\bigl( X^{(y+\delta v)^*}_T\bigr) \Bigr] - \E \Bigl[ f\bigl( X^{y^*}_T\bigr) \Bigr] \Bigr\vert
&\leq  
\frac{\delta \sqrt{e} \| f \|_\infty}{T} 
{\mathbb E} \biggl[  
\int_0^{\tau_{y,{(v,\delta)}}}  \sum_{m\in {\mathbb N}_0} \Bigl\vert   \lambda_m^{-1}   
\chi_s^{m,{y,(v,\delta)}}  \Bigr\vert^2 ds \biggr]
^{1/2}
\\
&\hspace{5pt} 
+  {\Oh}\Bigl(
{\mathbb Q}^{y,(v,\delta),\tau}
\bigl( \bigl\{ \tau_{y,(v,\delta)} < T \bigr\} \bigr) 
\Bigr).
\end{split}
\end{equation*}
By the second step, we end up with 
\begin{align}
\Bigl\vert \E \Bigl[ f\bigl( X^{(y+\delta v)^*}_T\bigr) \Bigr] - \E \Bigl[ f\bigl( X^{y^*}_T\bigr) \Bigr] \Bigr\vert
&\leq  
\frac{\delta \sqrt{e} \| f \|_\infty}{T} 
{\mathbb E} \biggl[  
\int_0^{T}  \sum_{m\in {\mathbb N}_0} \Bigl\vert   \lambda_m^{-1}   
\chi_s^{m,{y,(v,\delta)}}  \Bigr\vert^2 ds\biggr]
^{1/2}
\nonumber
\\
&\hspace{-5pt} + {\Oh}\biggl(
\frac{\delta^2 e}{T^2} 
{\mathbb E} \biggl[
\int_0^T \sum_{m\in {\mathbb N}_0} \bigl\lvert \lambda_m^{-1} 
\chi_s^{m,{y,(v,\delta)}} \bigr\rvert^2 ds \biggr]
\biggr).
\label{eq:4:33:aa}
\end{align}
%

\noindent \textit{Fourth Step.}
By 
H\"older's inequality, 
\begin{align*}
		 &{\mathbb E} \biggl[
\int_0^T  \sum_{m\in {\mathbb N}_0} \Bigl\vert   \lambda_m^{-1}   
 \chi_s^{m,{y,(v,\delta)}} \Bigr\vert^2 ds  \biggr] \nonumber
			 = 
		{\mathbb E}  \biggl[
\int_0^T  \sum_{m\in {\mathbb N}_0} \Bigl\vert   (1 \vee m)^{\lambda}  
\chi_s^{m,{y,(v,\delta)}}   \Bigr\vert^2 ds \biggr] \nonumber
		\\
&\leq   
 \E  \biggl[ \int_0^{T}   \sum_{m\in {\mathbb N}_0} (1 \vee m)^2 \Bigl\vert  
\chi_s^{m,{y,(v,\delta)}} \Bigr\vert^2 ds \biggr]^{\lambda}
\nonumber
 \E \biggl[ \int_0^{T}   \sum_{m\in{\mathbb N}_0} \Bigl\vert  
\chi_s^{m,{y,(v,\delta)}} \Bigr\vert^2 ds \biggr]^{1-\lambda}.
\end{align*}
To estimate the above, return
to Proposition 
\ref{prop:flow}.
Changing therein $(x,y)$ for $((y_t(\delta,v)+\mu v)^*,y_t^*(\delta,v))$ for $\mu >0$,
dividing by $\mu$ and letting $\mu$ tend to $0$, we observe that 
$ \sum_{m\in{\mathbb N}_0}
\vert  %
\chi_s^{m,{y,(v,\delta)}}  \vert^2 \leq \lVert v \rVert_2^2$. Therefore, 
\begin{equation}
\label{eq:4:33:bb}
\begin{split} 
{\mathbb E} \left[
\int_0^T  \sum_{m\in {\mathbb N}_0} \Bigl\vert   \lambda_m^{-1}   
 \chi_s^{m,{y,(v,\delta)}} \Bigr\vert^2 ds \right]
&\leq (T \lVert v \rVert_2^2  )^{(1-\lambda)}
\bigl[
\Xi^{y,(v,\delta)}_T \bigr]^\lambda
\\
%
\textrm{\rm with} \quad 
\Xi^{y,(v,\delta)}_T &:=\E \biggl[ \int_0^{T}   \sum_{m\in {\mathbb N}_0} (1 \vee m)^2 \Bigl\vert  
\chi_s^{m,{y,(v,\delta)}} \Bigr\vert^2 ds \biggr].
\end{split}
\end{equation}
\vskip 4pt

\noindent \textit{Fifth step.}
By
\eqref{eq:4:33:aa},
and 
\eqref{eq:4:33:bb},  there exists a constant $C$, independent of $M$, $\delta$, $v$ and $y$, such that
\begin{equation*}
\begin{split} 
&\Bigl\vert \E \Bigl[ f\bigl( X^{(y+ \delta v)^*}_T\bigr) \Bigr] - \E \Bigl[ f\bigl( X^{y^*}_T\bigr) \Bigr] \Bigr\vert
\\
&\leq
\frac{\delta \sqrt{e} \lVert v \rVert_2^{1-\lambda} \| f \|_\infty }{T^{(1+\lambda)/2}} 
  \bigl[
\Xi^{y,(v,\delta)}_T \bigr]^{\lambda/2}
 + 
\frac{C \delta^2\lVert v \rVert_2^{2(1-\lambda)}}{T^{1+\lambda}}
  \bigl[
\Xi^{y,(v,\delta)}_T \bigr]^\lambda.
\end{split}
\end{equation*}
Integrating with respect to $y \in E^M$
with respect to a smooth compactly supported density 
$\psi$ on $E^M$, using Jensen's inequality and dividing through by $\delta$ yields
\begin{equation}
\label{eq:conclusion:third:step:lipschitz}  
 \begin{split}
&\int_{E^M} 
\frac{\psi(y) }{\delta} \Bigl\vert \E \Bigl[ f\bigl( X^{(y+ \delta v)^*}_T\bigr) \Bigr] - \E \Bigl[ f\bigl( X^{y^*}_T\bigr) \Bigr] 
 \Bigr\vert
dy 
\\
&\leq
\frac{ \sqrt{e}\lVert v \rVert_2^{1-\lambda} \| f \|_\infty }{T^{(1+\lambda)/2}} 
 \bigl[
 \Xi^{\psi,(v,\delta)}_T 
 \big]^{\lambda/2}
+
\frac{C \delta \lVert v \rVert_2^{2(1-\lambda)}}{T^{1+\lambda}}
 \bigl[ 
  \Xi^{\psi,(v,\delta)}_T  \bigr]^\lambda,
\end{split} 
\end{equation} 
where
$\displaystyle 
 \Xi^{\psi,(v,\delta)}_T := \int_{E^M}  \Xi^{y,(v,\delta)}_T \psi(y) dy$, that is
\begin{equation*} 
\begin{split} 
 \Xi^{\psi,(v,\delta)}_T &=\int_0^{T}  \int_{E^M} 
  \sum_{m\in {\mathbb N}_0} (1 \vee m)^2 \Bigl\vert  
\partial_y \bigl\langle  e_m, X^{y_s^*(v,\delta)}_s \bigr\rangle \cdot v \Bigr\vert^2 
\psi(y) dy ds
\\
&= \int_0^{T}  \int_{E^M} 
  \sum_{m\in {\mathbb N}_0} (1 \vee m)^2 \Bigl\vert  
\partial_y \bigl\langle  e_m, X^{y^*}_s \bigr\rangle \cdot v \Bigr\vert^2 
\psi\bigl(y_s(v,-\delta)\bigr) dy ds,
\end{split}
\end{equation*} 
with the last line following from a change of variable (as done in \eqref{eq expansion}). 

Assume for a while (the proof is given right below) that we have a deterministic bound for 
$\int_0^T
 \sum_{m\in {\mathbb N}_0} (1 \vee m)^2 \vert  
\partial_y \bigl\langle  e_m, X^{y^*}_t \bigr\rangle \cdot v 
\vert^2 dt$, independently of $y$ (in a full subset of $E^M$). Then, by Lebesgue's dominated convergence theorem, we can let 
$\delta$ to $0$ in \eqref{eq:conclusion:third:step:lipschitz}:
\begin{equation*} 
\begin{split} 
&\liminf_{\delta \rightarrow 0} \int_{E^M}  
\frac{\psi(y)}{\delta} \Bigl\vert \E \Bigl[ f\bigl( X^{(y+ \delta v)^*}_T\bigr) \Bigr] - \E \Bigl[ f\bigl( X^{y^*}_T\bigr) \Bigr] 
  \Bigr\vert
dy
\leq
\frac{ \sqrt{e} \lVert v \rVert_2^{1-\lambda} \| f \|_\infty }{T^{(1+\lambda)/2}} 
 \bigl[ \Xi_T^{\psi,(v,0)} \bigr]^{\lambda/2}.
\end{split}
\end{equation*} 
To estimate the right-hand side, we proceed as in the 
derivation of 
\eqref{eq:4:33:bb}. We return
back to
\eqref{eq lip}
in
 Proposition 
\ref{prop:flow}, this time swapping $(x,y)$ therein for $((y+\mu v)^*,y^*)$ for $\mu >0$,
divide by $\mu$ and let  $\mu$ tend to $0$.
Subsequently, we let $\varepsilon$ in
\eqref{eq lip}
tend to $0$ and deduce by Fatou's lemma that 
\begin{equation*} 
\Xi_T^{y,(v,0)}
 = \int_0^{T}  
  \sum_{m\in {\mathbb N}_0} (1 \vee m)^2 \Bigl\vert  
\partial_y \bigl\langle  e_m, X^{y^*}_s \bigr\rangle \cdot v \Bigr\vert^2 
 ds
 \leq \lVert v \rVert_2^2, 
 \end{equation*}
 for almost every $y \in E^M$, 
which gives 
\begin{equation}
\label{eq:conclusion:third:step:lipschitz:2}  
\begin{split} 
&\liminf_{\delta \rightarrow 0} \int_{E^M} 
\frac{\psi(y) }{\delta} \Bigl\vert \E \Bigl[ f\bigl( X^{(y+ \delta v)^*}_T\bigr) \Bigr] - \E \Bigl[ f\bigl( X^{y^*}_T\bigr) \Bigr] 
  \Bigr\vert
dy
 \leq
\frac{ \sqrt{e}  \lVert v \rVert_2 \| f \|_\infty }{T^{(1+\lambda)/2} }.
\end{split}
\end{equation} 
\textit{Last Step.} We now
assume that $f$ itself is Lipschitz continuous. By Lipschitz continuity of the flow $(X_T^{y^*})_{y \in E^M}$, 
the mapping $P_T^M f : y \in E^M \mapsto 
{\mathbb E}[f(X_T^{y^*})]$ is Lipschitz {continuous} 
{(with respect to the Euclidean norm on $E^M$, which coincides with the $L^2({\mathbb S})$-norm)}
and thus almost everywhere differentiable.
Then,  
the left-hand side in \eqref{eq:conclusion:third:step:lipschitz:2}
is equal to $\int_{E^M} \psi(y) \vert \nabla_y P_T^M f(y) \cdot v \vert dy$.
Since the bound is true for any density $\psi$, we get that the almost everywhere gradient of 
$y \in E^M \mapsto 
{\mathbb E}[f(X_T^{y^*})]$ 
is less than $\sqrt{e}\| f \|_\infty T^{-(1+\lambda)/2}$.
Therefore, the Lipschitz constant of $y \in E^M \mapsto 
{\mathbb E}[f(X_T^{y^*})]$ is less than $\sqrt{e} \| f \|_\infty T^{-(1+\lambda)/2}$, 
 {with 
$E^M$ being equipped with the $L^2_{\rm sym}({\mathbb S})$-norm}.  
 {Approximating any $y \in L^2_{\rm sym}({\mathbb S})$ by a sequence 
$(y^M \in E^M)_{M \geq 1}$, 
we see that 
the result remains true when the function 
$y \mapsto 
{\mathbb E}[f(X_T^{y^*})]$
is considered on the entire $L^2_{\rm sym}({\mathbb S})$.}

It remains to pass from a Lipschitz function 
$f$ to a merely bounded (measurable) function, but this may be regarded as a consequence of 
standard results in measure theory, see 
for instance
\cite[Lemma 2.2 p.160]{peszatZabczyk1995strongFellerIrredHilbertDiff}.
\color{black}
\qed
\end{proof}

\subsection{Proofs of Lemma \ref{cor:4.5} and Proposition \ref{prop:4.4:rewrite}}
\label{subse:technical}

Throughout, we use the notations introduced in Subsection 
\ref{subse:5.1}. 
We then start with the following lemma {that exposes a subtlety when changing variables} in the integral  \eqref{eq:sec:4:integral:we:want:to:define}.

\begin{lemma}
\label{lem:4.1}
Let
$((n_t^y)_{0 \leq t \leq T})_{y \in E^{*,M}}$
satisfy 
{\rm{({\bf F1})}}--{\rm{({\bf F2})}}
and
 $((z_t^y)_{0 \leq t \leq T})_{y \in E^{*,M}}$ be a deterministic {jointly continuous $L^2_{\rm sym}({\mathbb S})$-valued flow}, i.e., the map 
$(t,y) \in [0,T] \times E^{*,M} \mapsto z_t^y \in L^2_{\rm sym}({\mathbb S})$ is continuous. 
Then, for 
any $\varepsilon>0$,  
for any family $\{r_i^N\}_{i=0,\cdots,N}$ of subdivision points of $[0,T]$ with   
{$ 
\lim_{N \rightarrow \infty} 
\sup_{i=1,\cdots,N} \bigl\vert r_i^N - r_{i-1}^N \bigr\vert = 0,  
$ 
}
the following identity holds true (for convenience we merely write $r_i$ for $r^N_i$):
\begin{align}
&{\mathcal I}_{\varepsilon,\varphi}= \lim_{N\rightarrow\infty}\int_{E^M} \varphi(y )\sum_{i=0}^{N-1}    \biggl[  \Bigl\langle n^{y^*_{r_i}(v,\delta)}_{r_{i+1}},e^{\varepsilon\Delta } z_{r_i}^{y^*_{r_i}(v,\delta)}\Bigr\rangle-  \Bigl\langle n^{y^*_{r_i}(v,\delta)}_{r_{i}},e^{\varepsilon\Delta } z_{r_i}^{y^*_{r_i}(v,\delta)} \Bigr\rangle \biggr]   dy, 
\nonumber
\\
\textrm{with} \  &{\mathcal I}_{\varepsilon,\varphi} := 
\sum_{m \in {\mathbb N}_0}
\int_{E^M} \left\{ \int_0^T  \bigl\langle e^{\varepsilon\Delta }z_t^{y^*},e_m\bigr\rangle \varphi\bigl(y_t(v,-\delta) \bigr)d \bigl\langle n^{y^*}_t,e_m\bigr\rangle  \right\}  dy.
\label{eq:lem:4.1}
\end{align}
\color{black}
\end{lemma}
{We remark that the first superscript in the first line above is not $y^*_{r_{i+1}}(v,\delta)$ as one might expect.}

\begin{proof}
{We let 
$${\mathcal I}_{\varepsilon,\varphi}^m:=\int_{E^M} \left\{ \int_0^T  \bigl\langle e^{\varepsilon\Delta }z_t^{y^*},e_m\bigr\rangle \varphi\bigl(y_t(v,-\delta) \bigr)d \bigl\langle n^{y^*}_t,e_m\bigr\rangle  \right\}  dy.$$}
	By Corollary 
	\ref{cor:RS:3},
	one has for a fixed value of $m \in {\mathbb N}_0$: 
	\begin{equation*}
		\begin{split}
	{{\mathcal I}_{\varepsilon,\varphi}^m }		&=  \int_{E^M}\lim_{N\rightarrow\infty} \sum_{i=0}^{N-1} \bigl\langle e^{\varepsilon\Delta }z_{r_i}^{y^*},e_m \bigr\rangle \varphi\bigl(
			y_{r_i}(v,- \delta)\Bigr)\left[  \bigl\langle n^{{y^*}}_{r_{i+1}},e_m \bigr\rangle-  \langle n^{{y^*}}_{r_{i}},e_m\bigr\rangle \right]   dy \\ 
			&=  \lim_{N\rightarrow\infty}\int_{E^M} \sum_{i=0}^{N-1} \bigl\langle e^{\varepsilon\Delta }z_{r_i}^{y^*},e_m\bigr\rangle \varphi\bigl(y_{r_i}(v,- \delta)\bigr)\left[  \bigl\langle n^{{y^*}}_{r_{i+1}},e_m \bigr\rangle-  \bigl\langle n^{{y^*}}_{r_{i}},e_m \bigr\rangle \right]   dy,
		\end{split}
	\end{equation*}
	the argument for exchanging the limit and the sum following from 
	Lebesgue's dominated convergence theorem. From  
	Lemma \ref{lem:integral:RS:bound:0}, it is indeed clear
	that the sum over $i$ on the second line is uniformly bounded 
	in $N$. 	
	Therefore, performing for each $i \in \{0,\cdots,N-1\}$ an obvious change of variable for the integral in $y$, we get 
	\begin{equation}\notag
		\begin{split}
			{{\mathcal I}_{\varepsilon,\varphi}^m}  &=  \lim_{N\rightarrow\infty}\int_{E^M} \varphi(y )\sum_{i=0}^{N-1} \Bigl\langle e^{\varepsilon\Delta }z_{r_i}^{{y^*_{r_i}(v,\delta)}},e_m\Bigr\rangle \biggl[  
			\Bigl\langle \eta^{{y_{r_i}^*(v,\delta)}}_{r_{i+1}},e_m\Bigr\rangle-  \Bigl\langle \eta^{{y_{r_i}^*(v,\delta)}}_{r_{i}},e_m \Bigr\rangle \biggr]   dy. \\ 
		\end{split} 
	\end{equation}
	In fact, Lemma 
	\ref{lem:integral:RS:bound:0} says more: the argument inside the limit decays polynomially fast with $m$, uniformly in $N$. 
	In particular, summing over $m \in {\mathbb N}_0$, one can exchange the sum over $m$ and the limit over $N$. 
	{Since 
	$\sum_{m \in {\mathbb N}_0} 
	{\mathcal I}_{\varepsilon,\varphi}^m=
	{\mathcal I}_{\varepsilon,\varphi}$,}
	we obtain
	\begin{equation*}
		\begin{split}
%
			{{\mathcal I}_{\varepsilon,\varphi}}  &= 
			\lim_{N\rightarrow\infty}\int_{E^M} \varphi(y )\sum_{i=0}^{N-1} \sum_{m \in {\mathbb N}_0} 
			\Bigl\langle e^{\varepsilon\Delta }z_{r_i}^{{y^*_{r_i}(v,\delta)}},e_m\Bigr\rangle \biggl[  \Bigl\langle \eta^{{y^*_{r_i}(v,\delta)}}_{r_{i+1}},e_m\Bigr\rangle-  
			\Bigl\langle \eta^{{y^*_{r_i}}(v,\delta)}_{r_{i}},e_m \Bigr\rangle \biggr]   dy
			\\
			&=
			\lim_{N\rightarrow\infty}\int_{E^M} \varphi(y )\sum_{i=0}^{N-1} \biggl[ 
			\Bigl\langle e^{\varepsilon\Delta }z_{r_i}^{{y^*_{r_i}(v,\delta)}}, \eta^{y^*_{r_i}(v,\delta)}_{r_{i+1}}
			\Bigr\rangle- 
			\Bigl\langle e^{\varepsilon\Delta }z_{r_i}^{{y_{r_i}^*(v,\delta)}}, \eta^{y^*_{r_i}(v,\delta)}_{r_{i}}
			\Bigr\rangle
			\biggr]   dy,
		\end{split}
	\end{equation*}
	which is the desired result. 
	\qed
\end{proof}

The bulk of the analysis carried out in this subsection is the following statement: 
  \begin{proposition}
  \label{prop:4.4}
Let
$((n_t^y)_{0 \leq t \leq T})_{y \in E^{*,M}}$
satisfy 
{\rm ({\bf F1})}--{\rm ({\bf F2})}, and
 $((z_t^y)_{0 \leq t \leq T})_{y \in E^{*,M}}$ be a deterministic {$L^2_{\rm sym}({\mathbb S})$-valued flow such that $(t,y) \in [0,T] \times E^{*,M} \mapsto z_t^y \in L^2_{\rm sym}({\mathbb S})$ is continuous. 
Then, with the same notation 
as in \eqref{eq:lem:4.1}, for $\varepsilon >0$ and for 
$\varphi \in \cC^\infty_0( E^M)$,}
\begin{equation}
\label{eq:prop:4.4:main:step}
\begin{split}
&{{\mathcal I}_{\varepsilon,\varphi}} 
\\
& =  \lim_{N\rightarrow\infty}\int_{E^M} \varphi(y )\sum_{i=0}^{N-1}     \left[  \biggl\langle \tilde{n}^{{y,(v,\delta)}}_{r_{i+1}},e^{\varepsilon\Delta }
	z_{r_i}^{{y^*_{r_i}(v,\delta)}}
	\biggr\rangle-  \biggl\langle \tilde{n}^{{y,(v,\delta)}}_{r_{i}},e^{\varepsilon\Delta }z_{r_i}^{y_{r_i}^*(v,\delta)} \biggr\rangle \right]dy.    \end{split}
	\end{equation}
  \end{proposition}

\begin{proof}[of Proposition \ref{prop:4.4}]
	{
		It suffices to prove  
		\eqref{eq:prop:4.4:main:step}
		for a flow  $((z_t^{y^*})_{0 \leq t \leq T})_{y \in E^{M}}$, differentiable in $y$, with derivative   jointly continuous in $(t,y)$.  
		Indeed, by a mollification argument in the variable 
		$y$,
		we can approximate any $((z_t^{y^*})_{0 \leq t \leq T})_{y \in E^M}$ 
		that is only continuous in $(t,y)$
		by a flow that is regular in $y$ (with jointly continuous derivatives)
		and use
		\eqref{eq:integral:RS:bound:0:bis}
		in order to pass to the limit in 
		\eqref{eq:prop:4.4:main:step}. We thus assume below that 
		$((z_t^{y^*})_{0 \leq t \leq T})_{y \in E^M}$ 
		is differentiable in $y$, with derivative jointly continuous in $(t,y)$. 
	}
	
	{
		Another key observation is that
		$(t,y) \in [0,T] \times E^M \mapsto n_t^{y^*} \in {H}_{\rm sym}^{-2}({\mathbb S})$
		is jointly continuous in $(t,y)$ and thus uniformly continuous on 
		$[0,T] \times {\rm Supp}(\varphi)$, with 
		${\rm Supp}(\varphi)$ denoting the support of $\varphi$.
		This follows from ({\bf F2}) and the fact that the map $t \in [0,T] \mapsto n_t^{y^*} \in {\mathbb H}_{\rm sym}^{-2}({\mathbb S})$
		is continuous for each $y \in E^M$. By Lemma 
		\ref{lem:4.1}, one has}
	\begin{align*}
		{\mathcal I}_{\varepsilon,\varphi} 
		&=  \lim_{N\rightarrow\infty}\int_{E^M} \varphi(y ) \biggl\{ \sum_{i=0}^{N-1}     \biggl[ 
		\Bigl\langle n^{y_{r_i}^*(v,\delta)}_{r_{i+1}},e^{\varepsilon\Delta }z_{r_i}^{{y^*_{r_i}(v,\delta)}}\Bigr\rangle - 
		\Bigl\langle n^{y_{r_{i+1}}^*(v,\delta)}_{r_{i+1}},e^{\varepsilon\Delta }z_{r_i}^{y^*_{r_{i+1}}(v,\delta)}\Bigr\rangle
		\nonumber 
		\\ 
		&\hspace{80pt} +       \Bigl\langle n^{y^*_{r_{i+1}}(v,\delta)}_{r_{i+1}},e^{\varepsilon\Delta }z_{r_i}^{{y^*_{r_{i+1}}(v,\delta)}}\Bigr\rangle - 
		\Bigl\langle n^{{y_{r_{i+1}}^*(v,\delta)}}_{r_{i+1}},e^{\varepsilon\Delta }z_{r_i}^{{y_{r_i}^*(v,\delta)}}\Bigr\rangle  \nonumber
		\\ 
		&\hspace{80pt} +   		 \Bigl\langle n^{{y^*_{r_{i+1}}(v,\delta)}}_{r_{i+1}},e^{\varepsilon\Delta }z_{r_i}^{{y^*_{r_i}(v,\delta)}}
		\Bigr\rangle-  \Bigl\langle n^{{y_{r_i}^*(v,\delta)}}_{r_{i}},e^{\varepsilon\Delta }z_{r_i}^{y^*_{r_i}(v,\delta)} \Bigr\rangle   \biggr]  
		\biggr\} dy,
		\end{align*}
		{\text{which, by exchanging the first and third lines in the summand, can be rewritten} }	
		\begin{align*}
		{\mathcal I}_{\varepsilon,\varphi}&=  \lim_{N\rightarrow\infty} \biggl\{ \int_{E^M} \varphi(y )\sum_{i=0}^{N-1}     \biggl[  \Bigl\langle n^{y^*_{r_{i+1}}(v,\delta)}_{r_{i+1}},e^{\varepsilon\Delta }
		z^{{y_{r_i}^*(v,\delta)}}_{r_i}\Bigr\rangle-  \Bigl\langle n^{y_{r_i}^*(v,\delta)}_{r_{i}},e^{\varepsilon\Delta }z^{{y^*_{r_i}(v,\delta)}}_{r_i}\Bigr\rangle \biggr]   dy  \nonumber
		\\ 
		&\hspace{45pt} +     \int_{E^M} \varphi(y )\sum_{i=0}^{N-1}   \Bigl\langle n^{y^*_{r_{i+1}}(v,\delta)}_{r_{i+1}},e^{\varepsilon\Delta }
		\Bigl( z_{r_i}^{{y^*_{r_{i+1}}(v,\delta)}}
		- 
		z_{r_i}^{{y_{r_i}^*(v,\delta)}} \Bigr) \Bigr\rangle dy  \nonumber
		\\ 
		&\hspace{45pt} +\sum_{i=0}^{N-1} \int_{E^M}   \left[ \varphi(y )-\varphi\Bigl(y+\delta\tfrac{r_{i+1}-{r_{i}}}{T} v\Bigr) \right]    \Bigl\langle 
		n^{{y^*_{r_i}(v,\delta)}}_{r_{i+1}},e^{\varepsilon\Delta }z_{r_i}^{y_{r_i}^*(v,\delta)}\Bigr\rangle  dy \biggr\}  \nonumber
		\\
			&
		=:  \lim_{N\rightarrow\infty} \Bigl\{ T_1^N + T_2^N + T_3^N \Bigr\}.	\nonumber
		\label{eq:proof:tilde:eta:4:007}
	\end{align*}

	\noindent \textit{Analysis of $T_1^N$.}
	By applying Definition 
	\ref{def:4.2} (at point $y_t(v,\delta)$ instead of $y$)
	{and by using the fact that 
		$(\tilde n_t^y)_{0 \leq t \leq T}$ takes values in 
		$H^{-2}_{\rm sym}({\mathbb S})$}, we get 
	\begin{equation}\notag
		\begin{split}
			& T_1^N
			=   \int_{E^M} \varphi(y )\sum_{i=0}^{N-1}     \left[  \Bigl\langle \tilde{n}^{{y,(v,\delta)}}_{r_{i+1}},e^{\varepsilon\Delta }
			z_{r_i}^{y_{r_i}^*(v,\delta)}
			\Bigr\rangle-  \Bigl\langle \tilde{n}^{{y,(v,\delta)}}_{r_{i}},e^{\varepsilon\Delta }z_{r_i}^{y_{r_i}^*(v,\delta)}\Bigr\rangle \right]dy 
			\\ 
			&\hspace{2pt}  -  \tfrac{\delta}{T}   \int_{E^M} \varphi(y )\sum_{i=0}^{N-1} \sum_{m \in {\mathbb N}_0}  \left[ \int_{r_i}^{r_{i+1}} 
			\Bigl(
			\partial_w \Bigl[ \Bigl\langle n^{w^*}_s,  e_m \Bigr\rangle \Bigr]_{\vert w=y_s(v,\delta)}  \cdot v \Bigr) \, \Bigl\langle  z_{r_i}^{y_{r_i}^*(v,\delta)} ,e^{\varepsilon\Delta }e_m\Bigr\rangle  \, ds	  \right]  dy
			\\
			&{=   \int_{E^M} \varphi(y )\sum_{i=0}^{N-1}     \left[  \Bigl\langle \tilde{n}^{{y,(v,\delta)}}_{r_{i+1}},e^{\varepsilon\Delta }
				z_{r_i}^{y_{r_i}^*(v,\delta)}
				\Bigr\rangle-  \Bigl\langle \tilde{n}^{{y,(v,\delta)}}_{r_{i}},e^{\varepsilon\Delta }z_{r_i}^{y_{r_i}^*(v,\delta)}\Bigr\rangle \right]dy}
			\\ 
			&\hspace{2pt}  {-  \tfrac{\delta}{T}    \int_{E^M} \varphi(y )\sum_{i=0}^{N-1} \sum_{m \in {\mathbb N}_0}  \left[ \int_{r_i}^{r_{i+1}} 
				\Bigl(
				\partial_y \Bigl[ \Bigl\langle n^{y_s^*(v,\delta)}_s,  e_m \Bigr\rangle \Bigr]  \cdot v \Bigr) \, \Bigl\langle  z_{r_i}^{y_{r_i}^*(v,\delta)} ,e^{\varepsilon\Delta }e_m\Bigr\rangle  \, ds	  \right]  dy.}
		\end{split}
	\end{equation}
	{Exchanging the integral in $y$ and the sum over $m$ (which is possible thanks to
		Lemma \ref{lem:integral:RS:bound:0})} and then performing an integration by parts in the last line, we obtain
	\begin{align} 
		 &T_1^N \nonumber	
		=  \int_{E^M} \varphi(y )\sum_{i=0}^{N-1}  
		\left[  \Bigl\langle \tilde{n}^{{{y,(v,\delta)}}}_{r_{i+1}},e^{\varepsilon\Delta }
		z_{r_i}^{y_{r_i}^*(v,\delta)}
		\Bigr\rangle-  \Bigl\langle \tilde{n}^{{y,(v,\delta)}}_{r_{i}},e^{\varepsilon\Delta }z_{r_i}^{y_{r_i}^*(v,\delta)}\Bigr\rangle \right]
		dy \nonumber
		\\ 
		&\hspace{2pt} +   \tfrac{\delta}{T} \sum_{i=0}^{N-1}  \int_{r_i}^{r_{i+1}} \int_{E^M} \Bigl( \partial_y \varphi(y )\cdot v \Bigr)   \Bigl\langle n^{
			y_s^*(v,\delta)}_s,  e^{\varepsilon\Delta }z_{r_i}^{{y^*_{r_i}(v,\delta)}} \Bigr\rangle  	     dy ds \nonumber
		\\	
		&\hspace{2pt} +
		\tfrac{\delta}T
		\int_{E^M} \varphi(y )\sum_{i=0}^{N-1} \sum_{m \in {\mathbb N}_0}  \left[ \int_{r_i}^{r_{i+1}} 
		\Bigl\langle n^{y^*_s(v,\delta)}_s, e_m \Bigr\rangle
		\Bigl( \partial_y 
		{ \Bigl[ \Bigl\langle  z_{r_i}^{y^*_{r_i}(v,\delta)} ,e^{\varepsilon\Delta } e_m\Bigr\rangle \Bigr]} \cdot v \Bigr)  \, ds	  \right]  dy \nonumber
		\\
		&=:  T_{1,1}^N +  T_{1,2}^N + T_{1,3}^N.
		\label{eq:T11+T12+T13}
	\end{align}
	
	\noindent \textit{Analysis of $T_{1,3}^N+T_2^N$.}
	Using the regularity of the flow 
	$((z_t^y)_{0 \leq t \leq T})_{y \in E^{*,M}}$, 
	we write
	\begin{equation*}
		\begin{split}
			&T_2^N =     \int_{E^M} \varphi(y )\sum_{i=0}^{N-1}   \Bigl\langle n^{y^*_{r_{i+1}}(v,\delta)}_{r_{i+1}},e^{\varepsilon\Delta }
			\Bigl( z_{r_i}^{{y^*_{r_{i+1}}(v,\delta)}}
			- 
			z_{r_i}^{{y_{r_i}^*(v,\delta)}} \Bigr) \Bigr\rangle dy 
			\\
			&=-  \tfrac{\delta}T
			\int_{E^M} \varphi(y )\sum_{i=0}^{N-1} \bigl(r_{i+1} - r_i\bigr)  
			\sum_{m \in {\mathbb N}_0}
			\Bigl\langle n^{{y_{r_i+1}^*(v,\delta)}}_{r_{i+1}},e_m 
			\Bigr\rangle
			\Bigl(	 { \partial_y    \Bigl[ \Bigl\langle  z_{r_i}^{y_{r_i}^*(v,\delta)} ,    e^{\varepsilon\Delta } e_m \Bigr\rangle \Bigr] }\cdot v \Bigr) dy  
			\\
			&\hspace{15pt} 
			+
			\sum_{i=0}^{N-1}  \oh (r_{i+1}-r_i),
		\end{split}
	\end{equation*}
	where $\oh$ is the  little Landau symbol (and is here implicitly understood to be uniform in 
	$N$ and $i$). 
	And, then using the joint regularity of 
	$((n_t^y)_{0 \leq t \leq T})_{y \in E^{*,M}}$, 
	we get 
	\begin{equation*}
		\begin{split}
			\lim_{N \rightarrow \infty}
			\bigl( T_2^N + T_{1,3}^N \bigr)&  =  \lim_{N\rightarrow\infty}\int_{E^M} \varphi(y )\sum_{i=0}^{N-1} \sum_{m \in {\mathbb N}_0}  
			\biggl[ \int_{r_i}^{r_{i+1}} 
			\Bigl\langle n^{y_s^*(v,\delta)}_s
			-
			n_{r_{i+1}}^{y_{r_{i+1}}^*(v,\delta)}	  
			, e_m \Bigr\rangle 
			\\
			&\hspace{100pt} \times	  \Bigl({ \partial_y \Bigl[ \Bigl\langle  z_{r_i}^{y_{r_i}^*(v,\delta)} ,e^{\varepsilon\Delta } e_m\Bigr\rangle \Bigr]} \cdot v \Bigr)  \, ds	  \biggr]  dy
			=0.
		\end{split}
	\end{equation*}

	\noindent \textit{Analysis of $T_{1,2}^N+T_3^N$.}
	Adding and subtracting 
	the quantity 
	$$  \tfrac{\delta}{T} \sum_{i=0}^{N-1} \int_{E^M}   \bigl( \partial_y \varphi(y)\cdot v \bigr) \bigl(r_{i+1}-r_i\bigr)   \Bigl\langle 
	n^{y_{r_i}^*(v,\delta)}_{r_{i+1}},e^{\varepsilon\Delta }z_{r_i}^{y_{r_i}^*(v,\delta)} \Bigr\rangle  dy,$$
	we 
	have  
	\begin{align}
		& T_{1,2}^N + T_3^N  \nonumber
		\\   
		&=  \sum_{i=0}^{N-1}  \tfrac{\delta}{T} \int_{r_i}^{r_{i+1}} \hspace{-10pt}\int_{E^M} \Bigl( \partial_y \varphi(y )\cdot v  \Bigr)  \biggl[ 
		\Bigl\langle 
		n_s^{y^*_s(v,\delta)},  e^{\varepsilon\Delta }z^{{y_{r_i}^*(v,\delta)}}_{r_i} \Bigr\rangle
		-
		\Bigl\langle n^{{y_{r_i}^*(\delta,v)}}_{r_{i+1}},e^{\varepsilon\Delta }
		z^{y_{r_i}^*(v,\delta)}_{r_i}\Bigr\rangle 
		\biggr]  dy ds \nonumber
		\\ 
		&\hspace{15pt} + \sum_{i=0}^{N-1}  \int_{E^M} \biggl\{    \biggl[ \varphi(y)+ 
		\Bigl( \tfrac{\delta}{T}( {r_{i+1}}-r_i) \Bigr)
		\partial_y \varphi(y)\cdot v  \nonumber
		-\varphi\Bigl(y+\tfrac{\delta}{T}(r_{i+1}-r_i)v\Bigr) \biggr]   
		\\
		&\hspace{30pt} \times 
		 \Bigl\langle n^{y_{r_i}^*(v,\delta)}_{r_{i+1}},e^{\varepsilon\Delta }
		z_{r_i}^{{y_{r_i}^*(v,\delta)}} \Bigr\rangle  \biggr\} dy\nonumber
		\\
		&=:   T_{(1,2,3),1}^N + T_{(1,2,3),2}^N.
		\label{eq:proof:tilde:eta:4}
	\end{align}
	
	\noindent	\textit{Analysis of $T_{(1,2,3),1}^N+T_{(1,2,3),2}^N$.}
	We claim that the limits of the two terms in the above argument are $0$ as 
	we can write 
	both of them in the form 
	$\sum_{i=0}^{N-1}  \oh (r_{i+1}-r_i)$.

The limit of $(T^N_{(1,2,3),2})_{N \geq 1}$ is easily handled by using the fact that $\varphi$ is smooth and by 
{invoking the duality between $H^2_{\rm sym}({\mathbb S})$ and 
	$H^{-2}_{\rm sym}({\mathbb S})$  
	(to handle terms of the form 
	$\langle n^{{y_{r_i}^*(v,\delta)}}_{r_{i+1}},e^{\varepsilon\Delta }z_{r_i}^{{y^*_{r_i}(v,\delta)}}\rangle$)}. 

The limit of $(T^N_{(1,2,3),1})_{N \geq 1}$
is shown to be $0$ by invoking 
the fact that 
the mapping 
$(t,y) \in [0,T] \times E^M \mapsto \eta_t^y \in {\mathbb H}^{-2}_{\rm sym}({\mathbb S})$
is jointly continuous in $(t,y)$ and thus uniformly continuous on 
$[0,T] \times {\rm Supp}(\varphi)$. 
\vskip 5pt

\noindent \textit{Conclusion.}
Back to 
\eqref{eq:proof:tilde:eta:4}, we deduce from the above analysis that 
\begin{equation*}
	\lim_{N\rightarrow\infty} \Bigl\{ T_1^N + T_2^N + T_3^N \Bigr\} 
	=
	\lim_{N\rightarrow\infty} \Bigl\{ T_{1,1}^N + T_{1,2}^N + T_{1,3}^N + T_2^N + T_3^N \Bigr\} 
	=
	\lim_{N\rightarrow\infty} T_{1,1}^N.
\end{equation*} 
This completes the proof. 
\qed
\end{proof}

We now apply Proposition 
  \ref{prop:4.4}
  to the proofs
  of Lemma \ref{cor:4.5} and Proposition \ref{prop:4.4:rewrite}. 
  In order to do so, we assume that the flow $((z_t^y)_{0 \leq t \le T})_{y \in E^{*,M}}$ 
in
Proposition 
  \ref{prop:4.4}
reduces to one single trajectory 
$(z_t)_{0 \leq t \leq T}$. 
Recalling the notation
\eqref{eq:tilde:z:varphi}
and following the derivation of 
  \eqref{eq:item:4:shifted:tilted}, 
 we observe that 
 the left-hand side in 
 \eqref{eq:prop:4.4:main:step} 
 (whose explicit form is given in 
 \eqref{eq:lem:4.1})
 can be rewritten as
%
\begin{equation}
\label{eq:prop:4.4:main:step:LHS}
\begin{split}
&{{\mathcal I}_{\varepsilon,\varphi}}
		= 
		\int_{E^M} 
\biggl(		\int_0^T 
		\bigl\langle e^{\varepsilon\Delta }\tilde z_t^{y,\varphi,{(v,\delta)}} , d  n^{{y^*}}_t \bigr\rangle   \biggr) dy.
		\end{split}
		\end{equation}
In order to handle the right-hand side of 
\eqref{eq:prop:4.4:main:step}, we 
recall 
\eqref{eq:tilde:eta:varphi}. We observe that the argument in the limit appearing in the right-hand side of 
\eqref{eq:prop:4.4:main:step} can be rewritten
\begin{equation}
\label{eq:integral:varphi}
\begin{split}
&\int_{E^M} \varphi(y )\sum_{i=0}^{N-1}     \left[  \Bigl\langle \tilde{n}^{{{y,(v,\delta)}}}_{r_{i+1}},e^{\varepsilon\Delta }z_{r_i}\rangle-  \langle \tilde{n}^{{{y,(v,\delta)}}}_{r_{i}},e^{\varepsilon\Delta }z_{r_i} \Bigr\rangle \right]dy
\\
&= \sum_{i=0}^{N-1}
\biggl[ \Bigl\langle 
\tilde n_{r_{i+1}}^{\varphi,{(v,\delta)}},
e^{\varepsilon\Delta }z_{r_i}
\Bigr\rangle
-
\Bigl\langle 
\tilde n_{r_{i}}^{\varphi,{(v,\delta)}},
e^{\varepsilon\Delta }z_{r_i}
\Bigr\rangle \biggr].
\end{split}
\end{equation}

\begin{proof}[of Lemma \ref{cor:4.5} and Proposition \ref{prop:4.4:rewrite}]
With $z \in U^2({\mathbb S})$, we apply 
Proposition 
\ref{prop:4.4}
with $(z_t:={\mathbf 1}_{[r,s]}(t) z)_{0 \leq t \leq T}$ for 
a given pair $(r,s) \in [0,T]^2$ satisfying $r<s$. 
By 
\eqref{eq:tilde:z:varphi}, 
\eqref{eq:prop:4.4:main:step:LHS}
and
\eqref{eq:integral:varphi}, we obtain
\begin{equation}
	\label{eq:cor:4.5:1}
	\int_{E^M} 
	\biggl(		\int_0^T 
	\Bigl\langle e^{\varepsilon\Delta }\tilde z_t^{y,\varphi,{(v,\delta)}} , d  n^{{y}^*}_t \Bigr\rangle   \biggr) dy
	=
	\lim_{N \rightarrow \infty} 
	\sum_{i=0}^{N-1}
	\biggl[ \Bigl\langle 
	\tilde n_{r_{i+1}}^{\varphi,{(v,\delta)}},
	e^{\varepsilon\Delta }z_{r_i}
	\Bigr\rangle
	-
	\Bigl\langle 
	\tilde n_{r_{i}}^{\varphi,{(v,\delta)}},
	e^{\varepsilon\Delta }z_{r_i}
	\Bigr\rangle \biggr].
\end{equation}
Regardless the choice of the subdivision 
$\{r^N_i\}_{i=0,\cdots,N}$ (recall that we omit the superscript $N$ in the various equations), we 
have
\begin{equation*}
	\begin{split}
		&\sum_{i=0}^{N-1}
		\biggl[ \Bigl\langle 
		\tilde n_{r_{i+1}}^{\varphi,{(v,\delta)}},
		e^{\varepsilon\Delta }z_{r_i}
		\Bigr\rangle
		-
		\Bigl\langle 
		\tilde n_{r_{i}}^{\varphi,{(v,\delta)}},
		e^{\varepsilon\Delta }z_{r_i}
		\Bigr\rangle \biggr]
		\\
		&
		= 
		\sum_{i=0}^{N-1}
		\biggl\{
		\biggl[ \Bigl\langle 
		\tilde n_{r_{i+1}}^{\varphi,{(v,\delta)}},
		e^{\varepsilon\Delta }z
		\Bigr\rangle
		-
		\Bigl\langle 
		\tilde n_{r_{i}}^{\varphi,{(v,\delta)}},
		e^{\varepsilon\Delta }z
		\Bigr\rangle \biggr]
		{\mathbf 1}_{[r,s]}\bigl(r_i\bigr)
		\biggr\}.
	\end{split}
\end{equation*}
If we assume that $r$ and $s$ belong to the collection $\{r_i^N\}_{i=0,\cdots,N}$, which can be done 
without any loss of generality, we get 
\begin{equation*}
	\begin{split}
		\sum_{i=0}^{N-1}
		\biggl[ \Bigl\langle 
		\tilde n_{r_{i+1}}^{\varphi,{(v,\delta)}},
		e^{\varepsilon\Delta }z_{r_i}
		\Bigr\rangle
		-
		\Bigl\langle 
		\tilde n_{r_{i}}^{\varphi,{(v,\delta)}},
		e^{\varepsilon\Delta }z_{r_i}
		\Bigr\rangle \biggr]
		&=  \Bigl\langle 
		\tilde n_{s}^{\varphi,{(v,\delta)}},
		e^{\varepsilon\Delta }z
		\Bigr\rangle
		-
		\Bigl\langle 
		\tilde n_{r}^{\varphi,{(v,\delta)}},
		e^{\varepsilon\Delta }z
		\Bigr\rangle.
	\end{split}
\end{equation*} 
Then, \eqref{eq:cor:4.5:1}
yields
\begin{equation*}
	\Bigl\langle 
	\tilde n_{s}^{\varphi,{(v,\delta)}},
	e^{\varepsilon\Delta }{z}
	\Bigr\rangle
	-
	\Bigl\langle 
	\tilde n_{r}^{\varphi,{(v,\delta)}},
	e^{\varepsilon\Delta }{z}
	\Bigr\rangle
	=
	\int_{E^M} 
	\biggl(		\int_0^T 
	\Bigl\langle e^{\varepsilon\Delta }\tilde z_t^{y,\varphi,{(v,\delta)}} , d  n^{{y^*}}_t \Bigr\rangle   \biggr) dy.
\end{equation*}
Recalling the definition 
\eqref{eq:tilde:z:varphi}
and using the 
fact that 
$\varphi \geq 0$, 
we observe 
that 
$(\tilde z_t^{y,\varphi,{(v,\delta)}})_{0 \leq t \leq T}$
takes values in $U^2({\mathbb S})$. 
By Corollary 
\ref{cor:RS:3}, 
the right-hand side is non-negative.
Assuming that $z \in H^2_{\rm sym}({\mathbb S})$ and letting 
$\varepsilon$ tend to $0$, we complete the proof
of 
Lemma \ref{cor:4.5}.
In turn, this permits us to invoke 
Lemma \ref{lem:RS:3}
to
let $N$ tend to $\infty$
in the right-hand side of 
\eqref{eq:integral:varphi}. This gives the right-hand side in 
\eqref{eq:rewriting:propr:4.4}. 
As for the left-hand side in 
\eqref{eq:integral:varphi}, the limit is given by 
Proposition 
  \ref{prop:4.4} and identifies with 
  the right-hand side of 
  \eqref{eq:prop:4.4:main:step:LHS}. 
  This gives the left-hand side in 
\eqref{eq:rewriting:propr:4.4}
  and
  proves 
  Proposition 
    \ref{prop:4.4:rewrite}. 
\qed
\end{proof}

\appendix
 
\noindent {\bf Acknowledgement.} 
We
are very grateful to the two anonymous referees for their valuable comments and suggestions which 
clearly helped us to improve the article.
 

 
\bibliographystyle{spbasic} 
\bibliography{FullBibliographyWRPH1p}

\end{document}